\newtheorem{Def}{Definition}[section]
\newtheorem{lem}[Def]{Lemma}
\newtheorem{theo}[Def]{Theorem}
\newtheorem{pro}[Def]{Proposition}
\newtheorem{rem}[Def]{Remark}
\newtheorem{ex}[Def]{Example}
\newtheorem{assum}{Assumption}
\definecolor{Green}{RGB}{0,128,0}
\newcommand{\LL}{\langle}
\newcommand{\RR}{\rangle}
\newcommand{\mcal}{\mathcal}
\newcommand{\DW}{\Delta \widehat W}
\newcommand{\mbb}{\mathbb}
\newcommand{\mbf}{\mathbf}
\newcommand{\ud}{\mathrm d}
\newcommand{\PD}{\partial}
\numberwithin{equation}{section}
\allowdisplaybreaks \allowdisplaybreaks[4]
\begin{document}

\title[Asymptotic error distribution for SRK method]{Asymptotic error distribution for stochastic Runge--Kutta methods of strong order one}

\author{Diancong Jin}
\address{School of Mathematics and Statistics, Huazhong University of Science and Technology, Wuhan 430074, China;
	Hubei Key Laboratory of Engineering Modeling and Scientific Computing, Huazhong University of Science and Technology, Wuhan 430074, China}
\email{jindc@hust.edu.cn}

\thanks{This work is supported by National Natural Science Foundation of China (Nos. 12201228, 12031020 and 12171047).
}

\keywords{asymptotic error distribution, convergence in distribution, stochastic Runge--Kutta method, stochastic differential equation 
}

\begin{abstract}
This work gives the asymptotic error distribution of the stochastic Runge--Kutta (SRK) method of strong order $1$ applied to Stratonovich-type stochastic differential equations. For dealing  with the implicitness introduced in the diffusion term,   we propose a framework to derive the asymptotic error distribution of diffusion-implicit or fully implicit numerical methods, which  enables us to construct a fully explicit numerical method sharing the same asymptotic error distribution as the SRK method. Further, we show  that the limit distribution $U(T)$ satisfies $\mathbf E|U(T)|^2\le e^{L_1T}(1+\eta_1)T^3$ for some $\eta_1\ge0$ only depending on the coefficients of the SRK method. Thus, we infer that $\eta_1$ is the key parameter reflecting the growth rate of the mean-square error of the SRK method. Especially,  among the SRK methods  of strong order $1$, those of weak order $2$ correspond to $\eta_1=0$, sharing the unified asymptotic error distribution, and have the smallest  mean-square errors after a long time. This property is also found for the case of additive noise.  It seems that we are the first to give the asymptotic error distribution of  fully implicit numerical methods for stochastic differential equations.
\end{abstract}

\maketitle

\textit{AMS subject classifications}: 60H35, 60H10, 60B10, 60F05

\section{Introduction}
In this work, we study the asymptotic error distribution of stochastic Runge--Kutta (SRK) methods applied to the following Stratonovich-type stochastic differential equation (SDE):
\begin{align}\label{SDE0}
	\begin{cases}
		\ud Y(t)=f(Y(t))\ud t+g(Y(t))\circ\ud \mbf W(t),~t\in(0,T], \\
		Y(0)=Y_0\in\mbb R^d,
	\end{cases}
\end{align}
where $\mbf W(t)=(W_1(t),\ldots,W_m(t))^\top_{t\in[0,T]}$ is an $m$-dimensional standard Brownian motion defined on $(\Omega,\mcal F,\mbf P)$ with respect to (w.r.t.) a filtration $\{\mcal F_t\}_{t\in[0,T]}$ with  $\{\mcal F_t\}_{t\in[0,T]}$ satisfying usual conditions, and $f:\mbb R^d\to\mbb R^d,\,g\in\mbb R^d\to\mbb R^{d\times m}$ are Lipschitz continuous such that \eqref{SDE0} admits a unique strong solution. 
Stochastic numerical methods have been playing an important role in numerically approximating SDEs and modeling the intrinsic behaviors of SDEs over the past decades, which have received extensive attention and study (cf.\ \cite{Kloeden1992,Milsteinbook}).
In the theoretical study on stochastic numerical methods, the error analysis, measuring the reliability and accuracy of stochastic numerical methods, is one of the most basic and most important subject. Compared with the fruitful results on the convergence analysis of stochastic numerical methods, including the strong convergence, weak convergence and  convergence in probability,  the investigation on probabilistic limit theorems for errors between numerical solutions and exact solutions has been far from sufficient. In this paper, we are interested in the asymptotic error distribution of stochastic numerical methods applied to the SDE \eqref{SDE0}, which is a kind of generalized central limit theorem.
\subsection{Research background}
The asymptotic error distribution of stochastic numerical methods is the limit distribution  of the normalized error between numerical solutions and exact solutions as the step-size tends to zero, where the normalized error is the error weighted by the corresponding strong convergence order of numerical methods. The asymptotic error distribution provides the optimal strong convergence order for stochastic numerical methods and characterizes the evolution pattern of distribution of  the error in the small step-size regime. It also finds some applications in  the error structure of stochastic numerical methods (cf.\ \cite{Bouleau}). The study on the asymptotic error distribution dated back to the work \cite{Protter1991} for SDEs with bounded coefficients, where the authors gave a sufficient condition for  Euler method to  admit the asymptotic error distribution. Subsequent research by \cite{Protter1998AOP} provided a sufficient and necessary condition for the asymptotic error distribution of the Euler method when applied to
SDEs with linearly growing coefficients. Further, \cite{Protter2020SPA} established the   asymptotic error distribution of the Euler method for SDEs with locally Lipschitz
coefficients. Recently, the  asymptotic error distribution of $\theta$-method, applied to SDEs with additive noise, was obtained in \cite{HLS2023}. 
We also refer to the asymptotic error distribution of numerical methods applied to stochastic integral equations (\cite{Fukasawa2023,Nualart2023}), Mckean--Vlasov SDEs (\cite{Wufuke}) and SDEs driven by fractional Brownian motions \cite{HuAAP,Neuenkirch,HuBIT}. 

Most of the aforementioned works consider the asymptotic error distribution of
explicit methods for SDEs. To the best of our knowledge, in the existing literature,
there are no results on the asymptotic error distribution of diffusion-implicit numerical methods (the implicitness is introduced via the diffusion term g) for \eqref{SDE0}, despite
their superior stability and frequent applications in constructing structure-preserving algorithms. The aim of this paper is to fill this gap. More precisely, we study the asymptotic error distribution of a class of  SRK methods of strong order $1$ applied to \eqref{SDE0}. Generally speaking, when establishing the asymptotic error distribution of a numerical method for \eqref{SDE0}, one usually finds a continuous version of the numerical method and properly decomposes the normalized error into some dominated terms and remainder terms. Then Jacod’s theory on convergence in distribution of conditional Gaussian martingales (cf.\ \cite{Fukasawa2023,Jacod97,Protter1998AOP})  will play a key role in studying the convergence  in distribution of stochastic integrals in dominated terms. However, for a diffusion-implicit or fully implicit method, the implicitness introduced in the diffusion term makes it not easy to find a proper continuous version of the numerical method, since the adaptedness of integrands in stochastic integrals must be taken into account. In addition,  diffusion-implicit methods usually introduce the truncated Brownian increments to ensure the solvability, which also makes some trouble for the continuization of numerical methods and dealing with the convergence in distribution of stochastic integrals. 
\subsection{Framework to derive the asymptotic error distribution}
Aiming at presenting our arguments clearly for dealing with the asymptotic error distribution of diffusion-implicit numerical methods, we will focus on the scalar noise case (i.e., $m=1$) and multidimensional additive noise, and study the SRK methods \eqref{SRK1} and \eqref{SRKadd}. We note that our framework remains applicable to multidimensional multiplicative noise case, though at the cost of increased computational effort. In order to circumvent the difficulty of directly making the continuization of the SRK method \eqref{SRK1}, our strategy is to find a fully explicit numerical method which shares the same asymptotic error distribution as the SRK method. Such an explicit numerical method needs to approximate the exact solution of \eqref{SDE0} with strong order $1$ and  approximate the numerical solution of the SRK method with strong order strictly greater than $1$. We call this explicit numerical method an appurtenant method of the SRK method \eqref{SRK1}. In order to determine the appurtenant method, we generalize the fundamental strong convergence theorem \cite[Theorem 2.1]{ZhangZQ13} to give the strong error order between any two Markov chains (see Theorem \ref{fundamental}). Based on Theorem \ref{fundamental} and the deterministic Taylor expansion, the appurtenant method can be constructed.
Thus, we indeed give a framework to study the asymptotic error distribution of a diffusion-implicit or fully implicit numerical method $\{Y_{n}\}_{n=0}^N$, which is summarized as follows.
\vspace{-3mm}
\begin{table}[H]
	{\caption{Framework $1$ for establishing the asymptotic error distribution of $Y_N$.}\label{frame}
		\begin{tabular}{l} 
			\hline \\
			Step 1. Determine the strong convergence order $p$ of 	$\{Y_{n}\}_{n=0}^N$.		\\
			Step 2. Find the explicit appurtenant method $\{\widetilde Y_{n}\}_{n=0}^N$ based on Theorem \ref{fundamental} \\and Taylor expansion. Here, $\{\widetilde Y_{n}\}_{n=0}^N$ is required to approximate the solution to \\\eqref{SDE0}  with strong order $p$ and approximates $\{Y_{n}\}_{n=0}^N$ with strong order $q>p$.
			\\
			Step 3. Construct a proper continuization of $\{\widetilde Y_{n}\}_{n=0}^N$ and establish the limit  \\distribution of $N^p(\widetilde{Y}_N-Y(T))$, which is that of $N^p({Y}_N-Y(T))$.
			\\
			\hline
	\end{tabular}}
\end{table}
\vspace{-4mm}
\noindent In Framework \ref{frame}, the asymptotic error distribution of ${Y}_N$ is the limit distribution of  $N^p({Y}_N-Y(T))$, which equals to $N^p({Y}_N-\widetilde{Y}_N)+N^p(\widetilde{Y}_N-Y(T))$. Note that $N^p({Y}_N-\widetilde{Y}_N)$ converges to $0$ in probability since ${Y}_N-\widetilde{Y}_N$ is of strong order $q>p$. Accordingly, $N^p({Y}_N-Y(T))$ and $N^p(\widetilde{Y}_N-Y(T))$ have the same limit distribution due to Slutzky’s theorem (cf.\ \cite[Theorem 13.18]{Klenke}).

\subsection{Main results and contributions}
Our main results include Theorems \ref{maintheorem1}-\ref{upperbound} for the case of multiplicative noise, and Theorems \ref{maintheorem3}-\ref{Vupperbound} for the case of additive noise.
Based on Framework \ref{frame} and Jacod’s theory on convergence in distribution of conditional Gaussian martingales, we give the asymptotic error distribution of the SRK method \eqref{SRK1} of strong order $1$, i.e.,  $N(Y_N^{SRK}-Y(T))\overset{d}{\Rightarrow}U(T)$ with $\{U(t)\}_{t\in[0,T]}$ satisfying a linear SDE; see Theorem \ref{maintheorem1}. Further, we establish the upper bound of $\mbf E|U(T)|^2$ in Theorem \ref{upperbound}, which is $e^{L_1T}(1+\eta_1)T^3$. Here, $L_1$ only depends on $f,g,Y_0$, and $\eta_1\ge 0$  only depends on the coefficients in the Butcher tableau of the SRK method \eqref{SRK1}. Note that $\mbf E|Y_N^{SRK}-Y(T)|^2\approx N^{-2}\mbf E|U(T)|^2$ for $N\gg 1$ due to $N(Y_N^{SRK}-Y(T))\overset{d}{\Rightarrow}{U(T)}$. Thus, we infer that  $\eta_1$ is the key parameter reflecting the growth rate of the mean-square error of the SRK method \eqref{SRK1}. In addition, we show that  the SRK method \eqref{SRK1} is of weak order $2$ provided that $\eta_1=0$. Thus, our results indicate that  among the SRK methods \eqref{SRK1} of strong order $1$, those of weak order $2$ have the smallest  mean-square errors for $T\gg 1$.  Especially, Theorem \ref{maintheorem2} shows that 
those SRK methods of weak order $2$ share the same asymptotic error distribution.  Additionally, we prove in Section \ref{Sec5} that the similar conclusions also hold for the SRK method of strong order $1$ applied to \eqref{SDE0} with additive noise. The above findings are finally verified by numerical experiments in Section \ref{Sec6}.

Let us summarize the main contributions of this work as follows.
\begin{itemize}
	\item  We propose a practical and effective framework (Framework\ \ref{frame}) for establishing the asymptotic error distribution of diffusion-implicit or fully-implicit
	numerical methods for SDEs. In particular, this framework is successfully
	applied to deriving the asymptotic error distribution of a class of fully implicit
	methods for SDEs, which, to the best of our knowledge, is reported here for
	the first time. It is worth mentioning that Framework \ref{frame} is promisingly applicable to higher-order methods for SDEs and implicit numerical
	methods for other types of stochastic systems.
	
	\item  We identify the key parameter reflecting the growth rate of the mean-square error of the SRK method by analyzing the properties of the limit distribution. Our results reveal that among the SRK methods  of strong order $1$, those of weak order $2$ have the smallest  mean-square errors after a long time. Especially, the asymptotic error distribution of the considered SRK methods takes the unified form, when endowed with second order weak convergence.   
	
	\item  We are the first to uncover a novel relationship between the asymptotic error distribution of stochastic numerical methods and their weak convergence order.   From the main results of the paper, we conjecture that
	among stochastic numerical methods with same strong order, those with higher weak convergence order will  possess smaller strong error in  the long-time simulations. 
\end{itemize}
The rest of the article is organized as follows. We establish Theorem \ref{fundamental} for estimating the strong error order  between  two Markov chains in Section \ref{Sec2} and Theorem \ref{stablecon} about stable convergence for a class of SDEs. In addition, we establish the convergence in distribution for a class of SDEs in order to handle the asymptotic error distribution for the multiplicative noise case. Sections \ref{Sec3} and  \ref{Sec4} are devoted to establishing the asymptotic error distribution of the SRK method for \eqref{SDE0} with  scalar noise, by using Framework \ref{frame}.  Section \ref{Sec5} gives the asymptotic error distribution of the SRK method for the case of additive noise. Several numerical experiments are performed to verify our theoretical analyses in Section \ref{Sec6}. Finally, Section \ref{Sec7} gives the summary of this work.

\section{Preliminaries}\label{Sec2}
In this section, we focus on presenting two theorems on fundamental strong convergence results for numerical method and  convergence in distribution for a special class of SDEs. 

We begin with some notations.  Denote by $|\cdot|$  the $2$-norm of a vector or matrix, and by $\LL\cdot,\cdot\RR$ the scalar product of two vectors.  Denote by $\mbf L^p(\Omega,\mcal F,\mbf P;\mbb R^d)$, $p\ge 1$, the Banach space consisting of $p$th integrable $\mbb R^d$-valued random variables $X$, endowed with the usual norm $\|X\|_{\mbf L^p(\Omega)}:=(\mbf E|X|^p)^{1/p}$. Moreover, $\overset{d}{\Longrightarrow}$ denotes the convergence in distribution of random variables.

Denote by $\mathbf C(\mathbb R^d)$ (resp.\ $\mathbf C^{k}(\mathbb R^d)$) the space of continuous (resp.\ $k$th continuously differentiable) functions defined on $\mathbb R^d$. Let $\mathbf C_b^k(\mathbb R^d)$, $k\in\mathbb N^+$,  stand for the subset of $\mathbf C^{k}(\mathbb R^d)$, consisting of functions whose derivatives up to order $k$ are bounded. 
For a real-valued function $f\in\mathbf C^k(\mathbb R^d)$, denote by  $\mathcal D^k f(x)(\xi_1,\ldots,\xi_k)$ the $k$th order G\v ateaux derivative along the directions $\xi_1,\ldots,\xi_k\,\in\mathbb R^d$. 
For an $\mathbb R^m$-valued function $f=(f_1,\ldots,f_m)^\top\in\mathbf C^k(\mathbb R^d)$, denote $\mathcal D ^kf(x)(\xi_1,\ldots,\xi_k):=(\mathcal D ^kf_1(x)(\xi_1,\ldots,\xi_k),\ldots,\mathcal D ^kf_m(x)(\xi_1,\ldots,\xi_k))^\top$.
Let $\mathbf F$ be the set of functions growing at most polynomially, i.e., a (tensor-valued) function  $f\in\mathbf F$ means that there exist $C>0$ and $\eta>0$ such that for any $x\in\mathbb R^d$, $|f(x)|\le C(1+|x|^\eta)$ or $\|f(x)\|_{\otimes}\le C(1+|x|^\eta)$, where $\otimes$ denotes the norm of a tensor. Throughout this paper, let $K(a_1,a_2,...,a_m)$ be some generic constant dependent on the parameters $a_1,a_2,...,a_m$ but independent of the step-size $h$, which may vary for each appearance.

\subsection{Fundamental strong convergence theorem}
In this part, we present  a  theorem for analyzing the strong error order between two Markov chains, which will be used to construct the appurtenant method of \eqref{SRK1}.
Let $\{t_0<t_1<\cdots<t_N=T\}$, $N\in\mbb N^+$ be a uniform partition of $[t_0,T]$, i.e., $t_k-t_{k-1}=\frac{T-t_0}{N}:=h$ for any $k=1,\ldots,N$.  Introduce two one-step mappings $\bar X_{t,x}(t+h)$ and $\widetilde{X}_{t,x}(t+h)$ depending on $t,x,h$, which are defined respectively by
\begin{align*}
\bar X_{t,x}(t+h)&=x+\bar A(t,x,h;\mbf W(s)-\mbf W(t),t\le s\le t+h), \\
\widetilde X_{t,x}(t+h)&=x+\widetilde A(t,x,h;\mbf W(s)-\mbf W(t),t\le s\le t+h),
\end{align*}
where $\bar A(t,x,h;y)$ and $\widetilde{A}(t,x,h;y)$ are two measurable functions w.r.t. $(t,x,h;y)\in \mbb R\times\mbb R^d\times\mbb R\times\mbb R^m$. Then one can construct two Markov chains $(\bar X_k,\mcal F_{t_k})$ and  $(\widetilde X_k,\mcal F_{t_k})$, $k=0,1,\ldots,N$: Let $\bar X_0=\widetilde{X}_0\in\mbf L^2(\Omega,\mcal F,\mbf P;\mbb R^d)$, and for $k=0,\ldots,N-1$:
\begin{align*}
	&\bar X_{k+1}=\bar{X}_{t_k,\bar{X}_k}(t_{k+1})=\bar{X}_k+\bar A(t_k,\bar X_k,h;\mbf W(s)-\mbf W(t_k),\;t_k\le s\le t_{k+1}),~k=0,\ldots,N-1,\\
		&\widetilde X_{k+1}=\widetilde{X}_{t_k,\widetilde{X}_k}(t_{k+1})=\widetilde{X}_k+\widetilde A(t_k,\widetilde X_k,h;\mbf W(s)-\mbf W(t_k),\;t_k\le s\le t_{k+1}),~k=0,\ldots,N-1.
\end{align*} 

Authors of \cite{ZhangZQ13} proposed a fundamental strong convergence theorem, \cite[Theorem 2.1]{ZhangZQ13}, for numerical methods applied to SDEs. Following the arguments of proving \cite[Theorem 2.1]{ZhangZQ13}, we give the strong error order between $\bar{X}_k$ and $\widetilde{X}_k$.     

\begin{theo}\label{fundamental}
	Let $\{\bar{X}_k\}_{k=0}^N$ and $\{\widetilde{X}_k\}_{k=0}^N$ be defined as above. Assume that the following conditions hold:
	\begin{itemize}
		\item [(A1)] For some $p\ge 1$, there exist $p_2\ge \frac{1}{2}$, $p_1\ge p_2+\frac{1}{2}$,  $\alpha\ge 1$, $h_0\in(0,1]$ and $K_1>0$ such that for all $t\in[t_0,T-h]$, $x\in\mbb R^d$ and $h\in(0,h_0]$,
	\begin{align*}
	\big|\mbf E\big(\bar{X}_{t,x}(t+h)-\widetilde{X}_{t,x}(t+h)\big)\big|&\le K_1(1+|x|^{2\alpha})^{1/2}h^{p_1}, \\
	\big[\mbf E\big|\bar{X}_{t,x}(t+h)-\widetilde{X}_{t,x}(t+h)\big|^{2p}\big]^{1/(2p)}&\le K_1(1+|x|^{2\alpha p})^{1/(2p)}h^{p_2}.
	\end{align*}  
\item [(A2)] For any $q\ge 1$, there is $K_2(q)>0$ such that for all $h\in(0,h_0]$ and  $k=0,1,\ldots,N$,
\begin{align*}
	\mbf E|\bar X_k|^q+\mbf E|\widetilde X_k|^q \le K_2(q).
\end{align*}	
\item [(A3)] There is a representation
\begin{align*}
	\widetilde{X}_{t,x}(t+h)-\widetilde{X}_{t,y}(t+h)=x-y+V
\end{align*}
for which
\begin{gather*}
	\mbf E\big|\widetilde{X}_{t,x}(t+h)-\widetilde{X}_{t,y}(t+h) \big|^{2p}\le |x-y|^{2p}(1+Kh),\\
	\mbf E |V|^{2p}\le K(1+|x|^{p\beta}+|y|^{p\beta})|x-y|^{2p}h^p,
\end{gather*}
where $\beta\ge1$ is a constant.
	\end{itemize}
	Then, for any $h\in(0,h_0]$ and $k=0,\ldots,N$,
	\begin{align*}
		\big[\mbf E\big|\bar X_k-\widetilde X_k\big|^{2p}\big]^{1/(2p)}\le Kh^{p_2-1/2}.
	\end{align*}
\end{theo}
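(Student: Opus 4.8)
The plan is to follow the classical propagation argument behind fundamental mean-square convergence theorems (Milstein--Tretyakov type): push the one-step estimates (A1) through the stability structure (A3) by a discrete Gronwall inequality, using the moment bounds (A2) to control every polynomially growing prefactor via H\"older's inequality. Set $e_k:=\bar X_k-\widetilde X_k$, so that $e_0=0$. The starting point is the one-step decomposition obtained by inserting the $\widetilde X$-scheme started from $\bar X_k$:
\[
e_{k+1}=\underbrace{\big(\bar X_{t_k,\bar X_k}(t_{k+1})-\widetilde X_{t_k,\bar X_k}(t_{k+1})\big)}_{=:R_k}+\underbrace{\big(\widetilde X_{t_k,\bar X_k}(t_{k+1})-\widetilde X_{t_k,\widetilde X_k}(t_{k+1})\big)}_{=:S_k},
\]
where $R_k$ is the local error evaluated at the $\mcal F_{t_k}$-measurable point $\bar X_k$, and $S_k$ is the stability term, so that $S_k=e_k+V$ with the representation in (A3). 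Since the one-step increments depend only on the Brownian increments over $[t_k,t_{k+1}]$, which are independent of $\mcal F_{t_k}$, freezing $\bar X_k$ lets me apply (A1) conditionally, giving $|\mbf E[R_k\mid\mcal F_{t_k}]|\le K_1(1+|\bar X_k|^{2\alpha})^{1/2}h^{p_1}$ and $\mbf E[|R_k|^{2p}\mid\mcal F_{t_k}]\le K_1^{2p}(1+|\bar X_k|^{2\alpha p})h^{2p\,p_2}$; likewise (A3) yields $\mbf E[|S_k|^{2p}\mid\mcal F_{t_k}]\le|e_k|^{2p}(1+Kh)$ and $\mbf E[|V|^{2p}\mid\mcal F_{t_k}]\le K(1+|\bar X_k|^{p\beta}+|\widetilde X_k|^{p\beta})|e_k|^{2p}h^{p}$.

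The goal is then the one-step recursion $\mbf E|e_{k+1}|^{2p}\le(1+Kh)\,\mbf E|e_k|^{2p}+Kh^{2p\,p_2-p+1}$. Expanding $|e_{k+1}|^{2p}=|S_k+R_k|^{2p}$ to second order,
\[
|e_{k+1}|^{2p}\le|S_k|^{2p}+2p\,|S_k|^{2p-2}\langle S_k,R_k\rangle+C_p\big(|S_k|^{2p-2}|R_k|^2+|R_k|^{2p}\big),
\]
I bound each piece after taking expectations. The term $\mbf E|S_k|^{2p}$ is handled directly by the stability bound in (A3) via the tower property. The remainder terms $|S_k|^{2p-2}|R_k|^2$ and $|R_k|^{2p}$ are controlled by H\"older's and Young's inequalities together with the conditional $2p$-moment bound on $R_k$ and (A2); since $p\ge1$, their contribution to the forcing is of order $h^{2p\,p_2-p+1}$ or smaller.

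The crux, which I expect to be the main obstacle, is the cross term $\mbf E[|S_k|^{2p-2}\langle S_k,R_k\rangle]$: because $S_k$ is not $\mcal F_{t_k}$-measurable, I cannot pull out the conditional mean of $R_k$ directly. The remedy is to replace the weight $|S_k|^{2p-2}S_k$ by its $\mcal F_{t_k}$-measurable surrogate $|e_k|^{2p-2}e_k$. By the mean value theorem the difference is of size $(|S_k|+|e_k|)^{2p-2}|V|$, and $V=S_k-e_k$ is of order $|e_k|h^{1/2}$ in $L^{2p}$ by (A3). For the surrogate part I extract $\mbf E[R_k\mid\mcal F_{t_k}]$ and use its mean bound of order $h^{p_1}$; for the difference part I use the $L^{2p}$-smallness of $V$ and $R_k$. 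In both cases Young's inequality splits off a term $Kh\,\mbf E|e_k|^{2p}$ (absorbed into the $(1+Kh)$ factor) from a pure forcing term, and computing the $h$-exponents shows the two forcing contributions are of orders $h^{2p\,p_1-2p+1}$ and $h^{2p\,p_2-p+1}$ respectively. The hypothesis $p_1\ge p_2+\tfrac12$ is precisely the condition $2p(p_1-p_2)\ge p$ that makes the former no larger than the latter, so the overall forcing is $Kh^{2p\,p_2-p+1}$.

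Assembling these estimates gives the recursion $\mbf E|e_{k+1}|^{2p}\le(1+Kh)\,\mbf E|e_k|^{2p}+Kh^{2p\,p_2-p+1}$. With $e_0=0$, the discrete Gronwall inequality yields $\mbf E|e_k|^{2p}\le Kh^{2p\,p_2-p+1}\cdot\tfrac{e^{KT}}{Kh}=Kh^{2p\,p_2-p}$ uniformly for $k\le N$, and taking $2p$-th roots gives $\big[\mbf E|e_k|^{2p}\big]^{1/(2p)}\le Kh^{p_2-1/2}$, as claimed. Throughout, (A2) is invoked repeatedly to ensure all prefactors of the form $(1+|\bar X_k|^{2\alpha})$, $(1+|\bar X_k|^{p\beta}+|\widetilde X_k|^{p\beta})$ have uniformly bounded moments of every order, so that each H\"older step produces a constant $K$ independent of $h$.
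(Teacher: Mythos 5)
Your proposal is correct and is essentially the paper's own proof: the paper defers entirely to the argument of \cite[Theorem 2.1]{ZhangZQ13} (with the exact solution there replaced by $\widetilde X$), and that argument is precisely the Milstein--Tretyakov-type propagation scheme you spell out --- the decomposition $e_{k+1}=R_k+S_k$, conditional application of (A1)/(A3) at the $\mcal F_{t_k}$-measurable point $\bar X_k$, replacement of the weight $|S_k|^{2p-2}S_k$ by the $\mcal F_{t_k}$-measurable surrogate $|e_k|^{2p-2}e_k$ in the cross term, H\"older/Young steps controlled by (A2), and a discrete Gronwall argument. Your exponent bookkeeping (forcing terms $h^{2pp_1-2p+1}$ and $h^{2pp_2-p+1}$, balanced exactly by the hypothesis $p_1\ge p_2+\tfrac12$, yielding $\mbf E|e_k|^{2p}\le Kh^{2pp_2-p}$) matches the cited proof, so there is no gap.
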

\begin{proof}
This proof follows the same steps as in the proof of \cite[Theorem 2.1]{ZhangZQ13}, by replacing $X$ therein by $\widetilde{X}$ here. We note that \cite[Assumption 2.1]{ZhangZQ13} is used to establish \cite[Lemma 2.2]{ZhangZQ13} and the moment boundedness of $X(t_k)$ therein.  In fact, in the proof of \cite[Theorem 2.1]{ZhangZQ13}, only the one-step approximation error \cite[Eq.\ (2.9)-(2.10)]{ZhangZQ13}, the moment uniform boundedness of $X(t_k)$ and $\bar X_k$ therein, and \cite[Lemma 2.2]{ZhangZQ13} are used. These three conditions correspond to (A1), (A2) and (A3), respectively. Thus, the proof of this theorem can be completed by following the proof procedure of \cite[Theorem 2.1]{ZhangZQ13}.
\end{proof}

\subsection{Stable convergence in distribution}
In this part, we introduce the notion of stable convergence in distribution and giving a theorem on convergence in distribution of  solutions to a sequence of SDEs perturbed by processes stably converging in law.   

Let $\{X_n\}_{n=1}^\infty$ be a sequence of random variables with values in a Polish space $E$, all defined on $(\Omega,\mathcal{F},\mathbf{P})$. Let $(\tilde{\Omega},\tilde{\mathcal{F}},\tilde{\mathbf{P}})$  be an extension of $(\Omega,\mathcal{F},\mathbf{P})$ and let $X$ be an $E$-valued random variable on the extension.
We say that $X_n$ stably converges in law to $X$ in $E$, written $X_n\overset{stably}{\Longrightarrow}X$ in $E$, if 
$$\mathbf{E}[Z f(X_n)]\to \tilde{\mathbf{E}}[Z f(X)]$$
for all bounded and continuous $f:E\to \mathbb{R}$ and all bounded random variable $Z$ on $(\Omega,\mathcal{F})$, where $\tilde{\mathbf E}$ denotes the expectation w.r.t. $\tilde{\mathbf{P}}$. From the above definition, we know that $X_n\overset{stably}{\Longrightarrow}X$ implies $X_n\overset{d}{\Longrightarrow}X$.
We refer readers to \cite{Jacod97} for some basic properties of stable convergence in law.  A useful property about stable convergence is as follows.
\begin{pro}\cite[Lemma 2.1]{Protter1998AOP}\label{Pro1}
	Let $Y$ be a random variable on another Polish space $F$. If $X_n\overset{stably}{\Longrightarrow}X$ in $E$, then $(Y,X_n)\overset{stably}{\Longrightarrow}(Y,X)$ in $F\times E$.
\end{pro}
On basis of Proposition \ref{Pro1}, we present the following convergence in distribution of a family of solutions to systems of SDEs.  

\begin{theo}\label{stablecon}	
Let $\Phi^N=\{\Phi^N(t),t\in[0,T]\}$, $N\ge 1$ be	 $\{\mcal F_t\}$-adapted $\mbb R^{d_2}$-valued  processes defined on $(\Omega,\mcal F,\mbf P)$, and $Y=\{Y(t),t\in[0,T]\}$ be an $\{\mcal F_t\}$-adapted $\mbb R^{d_1}$-valued  process defined on $(\Omega,\mcal F,\mbf P)$ with almost sure continuous trajectories.
	Let $Z^N\in \mbf C([0,T];\mbb R^{d_2})$, $N\ge1$, be the strong solution of 
	\begin{equation}\label{sec2eq0}
	Z^N(t)
		=\int_0^tB_0(Y(s))Z^N(s)\ud s+\sum_{i=1}^{m}\int_0^tB_i(Y(s))Z^N(s)\ud W_i(s)+\Phi^N(t),~t\in[0,T],
	\end{equation}
	where  $B_i:\mbb R^{d_1}\to\mbb R^{d_2\times d_2}$ are bounded and $\mcal DB_i\in\mbf F$ (i.e., $\mcal DB_i$ grow at most polynomially) for $i=0,\ldots,m$. 
	Assume that the following conditions hold.
	\begin{itemize}
		\item [(1)] There is $\beta>0$ such that $Y\in\mbf C([0,T];\mbf L^p(\Omega;\mbb R^{d_1}))$ and $\|Y(t)-Y(s)\|_{\mbf L^p(\Omega)}\le K|t-s|^{\beta}$  for  sufficiently large $p$. 
		
		\item [(2)]  $\Phi^N\overset{stably}{\Longrightarrow}\Phi$ in $\mbf C([0,T];\mbb R^{d_2})$, where $\Phi$ is an $\{\tilde{\mcal F_t}\}$-adapted process defined on $(\tilde{\Omega},\tilde{\mathcal{F}},\{\tilde{\mcal F_t}\}_{t\in[0,T]},\tilde{\mathbf{P}})$, an extension of  $(\Omega,\mathcal{F},\{\mcal F_t\}_{t\in[0,T]}, \mathbf{P})$.
		
		\item [(3)]    $\sup\limits_{N\ge1}\sup\limits_{t\in[0,T]}\mbf E|\Phi^N(t)|^{4}+\sup\limits_{t\in[0,T]}\tilde{\mbf E}|\Phi(t)|^{4}<\infty$, and there is $\alpha>0$ such that $\sup\limits_{N\ge1}\mbf E|\Phi^N(t)-\Phi^N(s)|^2+\tilde{\mbf E}|\Phi(t)-\Phi(s)|^{2}\le K|t-s|^{\alpha}$ for any $t,s\in[0,T]$.
	\end{itemize}
Then we have $Z^N\overset{d}{\Rightarrow}Z$ in $\mbf C([0,T];\mbb R^{d_2})$ with $Z$ being the strong solution of
	\begin{align}\label{sec2eq1}
Z(t)=\int_0^tB_0(Y(s))Z(s)\ud s+\sum_{i=1}^m\int_{0}^{t}B_i(Y(s))Z(s)\ud W_i(s)+\Phi(t),~t\in[0,T].
	\end{align}
\end{theo}
\begin{proof}
	The main trouble lies in that $Z^N$ is not a continuous  function w.r.t. $(Y,W_1,\ldots,W_m,\Phi^N)$. Thus, we will prove the conclusion based on the approximation argument for convergence in distribution; see  \cite[Theorem 3.2]{HJWY24}. 
	
It follows from the boundedness of  $B_i$, $i=0,1,\ldots,m$, and $\sup\limits_{N\ge1}\sup\limits_{t\in[0,T]}\mbf E|\Phi^N(t)|^4<\infty$ that $\sup\limits_{N\ge1}\sup\limits_{t\in[0,T]}\mbf E|Z^N(t)|^4<\infty$, which together with $\sup\limits_{N\ge1}\mbf E|\Phi^N(t)-\Phi^N(s)|^2\le K|t-s|^{\alpha}$ yields
	\begin{align}\label{sec2eq3}
		\sup_{N\ge1}\mbf E|Z^{N}(t)-Z^{N}(s)|^2\le K|t-s|^{\min(1,\alpha)}.
	\end{align}

	Let $Z^{N,n}=\{Z^{N,n}(t),t\in[0,T]\}$ be the strong solution of
\begin{align*}
		Z^{N,n}(t)
=\int_0^tB_0(Y(\kappa_n(s)))Z^{N,n}(\kappa_n(s))\ud s+\sum_{i=1}^m\int_0^tB_i(Y(\kappa_n(s)))Z^{N,n}(\kappa_n(s))\ud W_i(s)+\Phi^N(t),
\end{align*}	
where $\kappa_n(s)=\lfloor\frac{sn}{T}\rfloor\frac{T}{n},$ $n\in\mbb N^+$. 
Let $\|f\|_{\mbf C([0,t])}$ denote the supremum norm of any continuous function defined on $[0,t]$. By the Burkholder--Davis--Gundy (BDG) inequality and the boundedness of $B_i$, $i=0,1,\ldots,m$,  we have
	\begin{align}\label{sec2eq2}
		&\;\mbf E\|Z^{N,n}-Z^N\|_{\mbf C([0,t])}^2\notag\\
		\le&\; K\mbf E\int_0^t|Z^{N,n}(\kappa_n(s))-Z^N(s)|^2\ud s+K\sum_{i=0}^m\mbf E\int_0^t|B_i(Y(\kappa_n(s)))-B_i(Y(s))|^2|Z^{N}(s)|^2\ud s\notag\\
		\le&\; K\mbf E\int_0^t|Z^{N,n}(\kappa_n(s))-Z^N(\kappa_n(s))|^2\ud s+K\mbf E\int_0^t|Z^{N}(\kappa_n(s))-Z^N(s)|^2\ud s \notag \\
		&\;+K\sum_{i=0}^m\int_0^t\|B_i(Y(\kappa_n(s)))-B_i(Y(s))\|_{\mbf L^4(\Omega)}^2\|Z^N(s)\|_{\mbf L^4(\Omega)}^2\ud s.
	\end{align}
Since $\mcal DB_i\in\mbf F$, $i=0,1,\ldots,m$, we have 	
$|B_i(y_1)-B_i(y_2)|\le K(1+|y_1|^l+|y_2|^l)|y_1-y_2|$ for some $l>0$. Thus, the H\"older inequality and the assumption on $Y$ gives 
\begin{align*}
	\sum_{i=0}^m \|B_i(Y(\kappa_n(s)))-B_i(Y(s))\|_{\mbf L^4(\Omega)}^2\le Kn^{-2\beta}.
\end{align*} 
Plugging the above relation into \eqref{sec2eq2} and using $\sup\limits_{N\ge1}\sup\limits_{t\in[0,T]}\mbf E|Z^N(t)|^4<\infty$ and \eqref{sec2eq3}, we arrive at
\begin{align*}
\mbf E\|Z^{N,n}-Z^N\|^2_{\mbf C([0,t])}\le K\int_0^t\mbf E\|Z^{N,n}-Z^N\|^2_{\mbf C([0,s])}\ud s+Kn^{-\min(1,\alpha,2\beta)}
\end{align*}
with $K$ being independent of $N$. In this way, one has $\lim\limits_{n\to\infty}\sup\limits_{N\ge1}\mbf E\|Z^{N,n}-Z^N\|^2_{\mbf C([0,T])}=0$
	due to the Gronwall inequality. This indicates $Z^{N,n}$ and $Z^N$ satisfies the condition (A1) of \cite[Theorem 3.2]{HJWY24}
	
	For any fixed $n\in\mbb N^+$,  define the mapping $\Gamma^n:\mbf C([0,T];\mbb R^{d_1})\times\mbf C([0,T];\mbb R)^{\otimes m}\times\mbf C([0,T];\mbb R^{d_2})\to \mbf C([0,T];\mbb R^{d_2})$ which maps $(u,g_1,\ldots,g_m,h)$ to the solution of
	\begin{align*}
		f(t)=\int_0^tB_0(u(\kappa_n(s)))f(\kappa_n(s))\ud s+\sum_{i=1}^m\int_0^tB_i(u(\kappa_n(s)))f(\kappa_n(s))\ud g_i(s)+h(t),~t\in[0,T].
	\end{align*}
	Following the argument for the continuity of $F^\Delta$ in the proof of \cite[Theorem 4.3]{MAM}, we obtain that for any fixed $n\in\mbb N^+$, $\Gamma^n$ is continuous w.r.t. $(u,g_1,\ldots,g_m,h)$.  Further, it holds that $(Y,W_1,\ldots,W_m,\Phi^N)\overset{stably}{\Longrightarrow}(Y,W_1,\ldots,W_m,\Phi)$ due to Proposition \ref{Pro1}  and thus $(Y,W_1,\ldots,W_m,\Phi^N)\overset{d}{\Longrightarrow}(Y,W_1,\ldots,W_m,\Phi)$ . The continuous mapping theorem gives \\ $\Gamma^n(Y,W_1,\ldots,W_m,\Phi^N)\overset{d}{\Longrightarrow}\Gamma^n(Y,W_1,\ldots,W_m,\Phi)$ in $\mbf C([0,T];\mbb R^{d_2})$ as $N\to\infty$.  By the definition of $\Gamma^n$, $\Gamma^n(Y,W_1,\ldots,W_m,\Phi^N)=Z^{N,n}$ and $Z^{\infty,n}:=\Gamma^n(Y,W_1,\ldots,W_m,\Phi)$ is the strong solution of
	\begin{align*}
		Z^{\infty,n}(t)=\int_0^tB_i(Y(\kappa_n(s)))Z^{\infty,n}(\kappa_n(s))\ud s+\sum_{i=1}^m\int_0^t B_i(Y(\kappa_n(s)))Z^{\infty,n}(\kappa_n(s))\ud W_i(s)+\Phi(t),~t\in[0,T].	
	\end{align*}
	It follows from the assumption on $\Phi$ that $\sup\limits_{t\in[0,T]}\tilde{\mbf E}|Z(t)|^4<\infty$ and $\tilde{\mbf E}|Z(t)-Z(s)|^2\le K|t-s|^{1\wedge\alpha}$. Similarly to the estimate for $\mbf E\|Z^{N,n}-Z^N\|^2_{\mbf C([0,T])}$, one can show that $\tilde{\mbf E}\|Z^{\infty,n}-Z\|^2_{\mbf C([0,T])}\le Kn^{-\min(1,\alpha,2\beta)}$. In this way, we have
	\begin{align*}
		Z^{N,n}\overset{d}{\Rightarrow}Z^{\infty,n}~\text{as}~N\to\infty~\text{for given}~n, ~Z^{\infty,n}\overset{d}{\Rightarrow}Z~\text{as}~n\to\infty.
	\end{align*}
	Thus, the conditions (A2) and (A3) of \cite[Theorem 3.2]{HJWY24} are fulfilled. Thus, the proof is complete by applying \cite[Theorem 3.2]{HJWY24}.
\end{proof}

\section{Properties of stochastic Runge--Kutta method for multiplicative noise}\label{Sec3}
In this section, for the scalar multiplicative noise, we establish the solvability and strong convergence order of the SRK method, and construct the appurtenant method which shares the same asymptotic error distribution as the SRK method. 

Consider the following SDE with scalar noise:
\begin{align}\label{mulSDE1}
	\begin{cases}
			\ud Y(t)=f(Y(t))\ud t+g(Y(t))\circ\ud W(t),~ t\in(0,T], \\
			Y(0)=Y_0\in\mbb R^d,
	\end{cases}
\end{align}
where $f,g:\mbb R^d\to\mbb R^d$ are Lipschitz continuous, $W$ is a one-dimensional standard Brownian motion, and $g(Y(t))\circ\ud W(t)$ denotes the Stratonovich stochastic differential.
When $g\in\mbf C^1(\mbb R^d)$, \eqref{mulSDE1} is equivalent to the following It\^o-type SDE:
\begin{align}\label{mulSDE2}
	\begin{cases}
		\ud Y(t)=\bar{f}(Y(t))\ud t+g(Y(t))\ud W(t),~ t\in(0,T], \\
		Y(0)=Y_0\in\mbb R^d,
	\end{cases}
\end{align}
where $\bar{f}(y)=f(y)+\frac{1}{2}\nabla g(y)g(y)$, $y\in\mbb R^d$, and  $g(Y(t))\ud W(t)$ denotes the It\^o stochastic differential. For convenience, we always assume that $Y_0$ is nonrandom.
\begin{assum}\label{assum1}
	Assume that $f$, $g$ and $(\nabla g)g$ are Lipschitz continuous.
\end{assum}
It is well-known that under Assumption \ref{assum1}, \eqref{mulSDE1} admits a unique strong solution which satisfies that for any $p\ge 2$, and $t,s\in[0,T]$,
\begin{align}
	\|Y(t)\|_{\mbf L^p(\Omega)}&\le C_1(p)e^{C_1(p)T}(1+|Y_0|),\label{Ybound}\\
	\|Y(t)-Y(s)\|_{\mbf L^p(\Omega)}&\le K(p,T)|t-s|^{1/2},\label{Yt-Ys}
\end{align}
where $C_1(p)$  depends on $p$ but independent of $T$ (cf.\ \cite[Chapter 2.4]{Maoxuerong}).

\subsection{Stochastic Runge--Kutta method}
Generally speaking, in order to ensure the solvability and stability of diffusion-implicit methods, one needs to use the truncated random variables $\Delta \widehat W_n$ instead of  $\Delta  W_n=W(t_{n+1})-W(t_n)$, $n=0,\ldots,N-1$.  In detail, we represent $\Delta W_n=\sqrt{h}\xi_n$, where $\xi_0,\xi_1,\ldots,\xi_{N-1}$ are independent $\mcal N(0,1)$-distributed random variables. Then define $\Delta \widehat W_n:=\sqrt{h}\zeta_{n,h}$ by
\begin{align}\label{zeta}
	\zeta_{n,h}:=\begin{cases}
		\xi_n,\quad &|\xi_n|\le A_h,\\
         A_h,\quad &\xi_n>A_h, \\
		-A_h,\quad &\xi_n<-A_h,
	\end{cases}
\end{align}
where $A_h:=\sqrt{2\kappa|\ln h|}$ with $\kappa\ge 1$ being a  constant. Then one has the following properties:
\begin{align}
	\mbf E|\xi_n-\zeta_{n,h}|^{p}&\le Kh^{\kappa},~\forall~p\ge 1, \label{zeta1}\\
	\|\xi_n^m-\zeta_{n,h}^m\|_{\mbf L^p(\Omega)}&\le K(p,m,\varepsilon)h^{\kappa/p-\varepsilon},~\forall ~m\in\mbb N^+,~p\ge1,~\varepsilon\in(0,\frac{\kappa}{p})\label{zeta2}, \\
	|\mbf E(\xi_n^m-\zeta_{n,h}^m)|&\le K(m,\varepsilon)h^{\kappa-\varepsilon},~\forall ~m\in\mbb N^+,~\varepsilon\in(0,\kappa) \label{zeta3}.
\end{align}
In fact, \eqref{zeta1} can be proved similarly to \cite[Page 39, Eq. (3.36)]{Milsteinbook}. Note that $\mbf E|\zeta_{n,h}|^q\le \mbf E|\xi_n|^q\le K(q)$ for any $q\ge 1$. By the H\"older inequality, for any $\varepsilon\in(0,\frac{\kappa}{p})$, we have that
$\mbf E|\xi_n^m-\zeta_{n,h}^m|^p=\mbf E\big(|m\int_0^1(\zeta_{n,h}+\lambda(\xi_n-\zeta_{n,h}))^{m-1}\ud\lambda|^p|\xi_n-\zeta_{n,h}|^p\big)\le \big(\mbf E|\xi_n-\zeta_{n,h}|^{\frac{p}{1-\varepsilon p/\kappa}}\big)^{1-\varepsilon p/\kappa}K(p,\varepsilon,m)\le K(p,\varepsilon,m)h^{\kappa-p\varepsilon}$, which verifies \eqref{zeta2}. In addition, \eqref{zeta3} is a direct result of \eqref{zeta2} with $p=1$.
  
Consider the following $s_0$-stage SRK for \eqref{mulSDE1}
\begin{align}\label{SRK1}
	\begin{cases}
		Z_{n,i}=Y_n^{SRK}+h\sum\limits_{j=1}^{s_0}a_{ij}f(Z_{n,j})+\DW_n\sum\limits_{j=1}^{s_0}b_{ij}g(Z_{n,j}),~i=1,\ldots,s_0,\\
		Y^{SRK}_{n+1}=Y^{SRK}_{n}+h\sum\limits_{i=1}^{s_0}\alpha_if(Z_{n,i})+\DW_n\sum\limits_{i=1}^{s_0}\beta_ig(Z_{n,i}),~n=0,\ldots,N-1,
	\end{cases}
\end{align}
with $Y_0^{SRK}=Y_0$. The coefficients of \eqref{SRK1} can be represented by the Butcher tableau of the form ~
\renewcommand{\arraystretch}{1.5}
\begin{tabular}{|c| c}
	\( A \) & \( B \) \\
		\hline
	\(\alpha^\top\) & \(\beta^\top\)  \\
\end{tabular},
where $A=(a_{ij})$, $B=(b_{ij})$, $\alpha=(\alpha_1,\ldots,\alpha_{s_0})^\top$ and $\beta=(\beta_1,\ldots,\beta_{s_0})^\top$.
Then the one-step approximation of \eqref{SRK1} can be written as
\begin{align}\label{SRK2}
	Y^{SRK}_{t,y}(t+h):=y+h\sum\limits_{i=1}^{s_0}\alpha_if(Z_i)+\DW\sum\limits_{i=1}^{s_0}\beta_ig(Z_i).
\end{align}
Here $\DW=\sqrt{h}\zeta$ is the truncation of $W(t+h)-W(t)=\sqrt{h}\xi$ for which $\xi\sim\mcal N(0,1)$ and $\zeta$ is defined as in \eqref{zeta} with $\xi_n$ replaced by $\xi$. In addition, 
$Z=(Z_1^\top,\ldots,Z_{s_0}^\top)^\top$ is determined by
\begin{align}\label{Z}
	Z=(e\otimes I_d)y+h(A\otimes I_d)F(Z)+\DW(B\otimes I_d)G(Z),
\end{align}
where $F(Z)=(f(Z_1)^\top,\ldots,f(Z_{s_0})^\top)^\top$, $G(Z)=(g(Z_1)^\top,\ldots,g(Z_{s_0})^\top)^\top$, every component of $e\in\mbb R^{s_0}$ equals to $1$, $I_d\in\mbb R^{d\times d}$ is the identity  matrix, and $\otimes$ denotes the Kronecker product of matrices. The following lemma gives the solvability of \eqref{Z}.

\begin{lem}\label{solvability}
	Let Assumption \ref{assum1} hold. Then there is $h_1>0$ such that for any $h\in(0,h_1]$, $y\in\mbb R^d$, $\omega\in\Omega$, \eqref{Z} is uniquely solvable. Moreover, there is $C_0>0$ such that 
	\begin{align*}
		|Z_i-y|\le C_0(1+|y|)(h+|\DW|),~h\in(0,h_1].
	\end{align*}
\end{lem}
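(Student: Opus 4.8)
The plan is to prove solvability through the Banach fixed-point theorem and then extract the quantitative bound via a self-improving estimate. The feature that makes everything go through is that the truncation forces $|\DW|=\sqrt h\,|\zeta|\le\sqrt{2\kappa h|\ln h|}$ \emph{pathwise}, so that $|\DW|\to 0$ as $h\to 0$ uniformly in $\omega$; this is exactly what allows us to treat the implicit diffusion term as a small perturbation with a threshold $h_1$ independent of the sample point.

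First I would fix $\omega\in\Omega$, $y\in\mbb R^d$ and $h$, and define the map $\Phi:(\mbb R^d)^{s_0}\to(\mbb R^d)^{s_0}$ by
$$\Phi(Z)=(e\otimes I_d)y+h(A\otimes I_d)F(Z)+\DW(B\otimes I_d)G(Z).$$
Since $f$ and $g$ are Lipschitz (Assumption \ref{assum1}), writing $L$ for a common Lipschitz constant and using $|F(Z)-F(Z')|\le L|Z-Z'|$, $|G(Z)-G(Z')|\le L|Z-Z'|$ together with $|A\otimes I_d|=|A|$ and $|B\otimes I_d|=|B|$, one gets
$$|\Phi(Z)-\Phi(Z')|\le L\big(|A|h+|B||\DW|\big)|Z-Z'|.$$
Because $|A|h+|B||\DW|\le|A|h+|B|\sqrt{2\kappa h|\ln h|}\to 0$ as $h\to0$, there is $h_1>0$ with $L(|A|h+|B||\DW|)\le\tfrac12$ for all $h\in(0,h_1]$, all $\omega$ and all $y$. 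Hence $\Phi$ is a contraction and \eqref{Z} is uniquely solvable.

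For the quantitative estimate I would use the linear growth of $f,g$ inherited from the Lipschitz property, namely $|F(Z)|\le C(1+|Z|)$ and $|G(Z)|\le C(1+|Z|)$ with $C$ depending on $s_0,f,g$. Setting $U:=Z-(e\otimes I_d)y$, whose $i$th block is precisely $Z_i-y$, the fixed-point identity yields
$$|U|\le\big(|A|h+|B||\DW|\big)C(1+|Z|)\le\big(|A|h+|B||\DW|\big)C\big(1+\sqrt{s_0}\,|y|+|U|\big).$$
Shrinking $h_1$ if needed so that $\delta:=C(|A|h+|B||\DW|)\le\tfrac12$, I absorb the $|U|$ term on the right into the left side to obtain $|U|\le 2\delta(1+\sqrt{s_0}\,|y|)$. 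Since $\delta\le C\max(|A|,|B|)(h+|\DW|)$ and $|Z_i-y|\le|U|$ for every $i$, this gives the claimed bound $|Z_i-y|\le C_0(1+|y|)(h+|\DW|)$ with $C_0$ depending only on $s_0$, $f$, $g$ and the Butcher coefficients.

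The only delicate point is uniformity: every inequality must hold for \emph{all} $\omega$, $y$ and $h\in(0,h_1]$ with a single threshold $h_1$. This is exactly what the deterministic bound $|\DW|\le\sqrt{2\kappa h|\ln h|}$ from the truncation \eqref{zeta} supplies; without truncation the factor multiplying the diffusion term would be an unbounded Gaussian increment, and no pathwise contraction threshold $h_1$ independent of $\omega$ could be guaranteed.
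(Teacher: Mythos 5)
Your proposal is correct and follows essentially the same route as the paper: a pathwise Banach fixed-point argument made possible by the deterministic truncation bound $|\DW|\le\sqrt{2\kappa h|\ln h|}$, followed by the linear-growth estimate to obtain the quantitative bound. The only (cosmetic) difference is in the last step: you absorb the $|U|$ term directly from the fixed-point identity, whereas the paper establishes that the ball of radius $2K_2(1+|y|)(h+|\DW|)$ is invariant under the Picard iteration and passes to the limit of the iterates; both yield the same constant-quality bound.
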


Since $h_1$ in Lemma \ref{solvability} is independent of $y$ and $\omega$, we obtain the well-posedness of \eqref{SRK1}. Next, we present the moment boundedness of \eqref{SRK1}.

\begin{lem}\label{SRK1bound}
	Let Assumption \ref{assum1} hold. Then for any $h\le h_1\wedge 1$, we have that for any $p\ge1$,
	\begin{align*}
		\sup_{n=0,1,\ldots,N}\mbf E|Y^{SRK}_n|^p \le K(p,T). 
	\end{align*}
\end{lem}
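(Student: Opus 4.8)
The goal is to establish uniform (in $n$ and $h$) moment bounds for the SRK numerical solution $\{Y_n^{SRK}\}_{n=0}^N$ defined by \eqref{SRK1}. The plan is to derive a one-step moment estimate of the form $\mbf E[|Y_{n+1}^{SRK}|^p \mid \mcal F_{t_n}] \le |Y_n^{SRK}|^p(1+Kh) + Kh$, and then iterate via a discrete Gronwall argument.

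First I would work with the one-step formulation \eqref{SRK2}. The key structural input is Lemma \ref{solvability}, which gives the pointwise bound $|Z_i - y| \le C_0(1+|y|)(h+|\DW|)$ for each stage value $Z_i$. Combined with the linear growth of $f$ and $g$ (a consequence of Assumption \ref{assum1}, since Lipschitz functions grow at most linearly), this yields $|f(Z_i)| \le K(1+|y| + (1+|y|)(h+|\DW|))$ and similarly for $g(Z_i)$. Feeding these into \eqref{SRK2} and using $\DW = \sqrt{h}\zeta$ with $|\zeta| \le A_h = \sqrt{2\kappa|\ln h|}$, I would obtain a bound of the shape
\begin{align*}
|Y^{SRK}_{t,y}(t+h) - y| \le K(1+|y|)\big(h + |\DW| + h|\DW| + |\DW|^2\big).
\end{align*}
The essential point is that the drift-type contributions carry a factor $h$, while the diffusion-type contribution $\DW\sum_i\beta_i g(Z_i)$ carries the martingale-like increment $\DW$.

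Next I would raise this to the $p$th power (taking $p$ even, or passing through $2p$, to keep everything polynomial) and take conditional expectation given $\mcal F_{t_n}$, exploiting that $\DW = \sqrt h\,\zeta$ is independent of $\mcal F_{t_n}$ with $\mbf E[\zeta] = 0$ and $\mbf E|\zeta|^q \le \mbf E|\xi|^q \le K(q)$ for all $q\ge1$. The odd-moment term $\mbf E[\DW \cdot (\text{stuff})]$ will not vanish exactly because the stage values $Z_i$ depend on $\DW$, so one must expand $g(Z_i)$ carefully: the leading term $\langle \text{gradient}, y\rangle \DW$ has mean zero, and the correction is higher order in $\sqrt h$. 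After collecting terms, the $\mbf E|\DW|^2 = h\,\mbf E|\zeta|^2 \le h$ contribution produces the $(1+Kh)|y|^p$ factor, and all remaining terms are $O(h)$, so that
\begin{align*}
\mbf E\big[|Y^{SRK}_{t,y}(t+h)|^p \,\big|\, \mcal F_{t_n}\big] \le |y|^p(1+Kh) + Kh(1+|y|^{p-1}),
\end{align*}
which after Young's inequality gives the clean recursion $\mbf E[|Y_{n+1}^{SRK}|^p] \le (1+Kh)\mbf E[|Y_n^{SRK}|^p] + Kh$. Iterating over $n \le N = (T-t_0)/h$ steps and using $(1+Kh)^N \le e^{KT}$ yields the uniform bound $K(p,T)$.

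The main obstacle I anticipate is the careful treatment of the diffusion term in the conditional-expectation step. Because the scheme is diffusion-implicit, $Z_i$ depends nonlinearly on $\DW$, so the cross terms between the drift and diffusion parts, and the odd powers of $\DW$, do not simply drop out; one must use the bound from Lemma \ref{solvability} to control $Z_i - y$ and verify that every term failing to vanish is genuinely $O(h)$ rather than $O(\sqrt h)$. The logarithmic factors in $A_h$ are harmless here since any power of $|\ln h|$ is absorbed by an arbitrarily small power of $h$, but they must be tracked to confirm that terms like $h^{3/2}|\ln h|^{k}$ remain of order $o(h)$ or at worst $O(h)$. Once the one-step estimate is secured with the correct powers of $h$, the discrete Gronwall iteration is routine.
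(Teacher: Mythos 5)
Your proposal is correct and takes essentially the same approach as the paper: both rest on Lemma \ref{solvability} plus the linear growth of $f,g$ to obtain the one-step bounds $|Y^{SRK}_{t,y}(t+h)-y|\le M(\DW)(1+|y|)h^{1/2}$ (with $M$ having all moments bounded uniformly in $h$, so the truncation logarithms never actually enter) and, after splitting $g(Z_i)=g(y)+(g(Z_i)-g(y))$ to exploit $\mbf E\,\DW=0$, the conditional-mean bound $|\mbf E(Y^{SRK}_{t,y}(t+h)-y)|\le K(1+|y|)h$. The only difference is that the paper concludes by citing \cite[Page 102, Lemma 2.2]{Milsteinbook} together with the H\"older inequality, whereas you re-derive that standard moment-bootstrapping lemma by hand (power expansion, conditional expectation, discrete Gronwall), which is precisely the content of the cited result.
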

We  postpone the proof of Lemmas \ref{solvability} and \ref{SRK1bound} to the appendix.
In the subsequent text of this section, we always assume that $h\le h_1\wedge 1$ without extra statement.  The first order mean-square convergence condition of \eqref{SRK1} have been obtained in \cite[Eq. (3.7)]{Birrage04}. Here, we generalize it to  the $\|\cdot\|_{L^{2p}(\Omega)}$-norm for $p\in[1,\frac{\kappa}{2}]$.
\begin{theo}\label{SRK1converge}
	Let Assumption \ref{assum1} hold. Let $f\in\mbf C^2(\mbb R^d)$, $g\in\mbf C^3(\mbb R^d)$ and $\mcal D^{2}f,\,\mcal D^3g\in \mbf F$. If
	$\alpha^\top e=\beta^\top e=1$, $\beta^\top(Be)=\frac{1}{2}$ and $\kappa\ge 2$, then for any $p\in[1,\frac{\kappa}{2}]$,
	\begin{align*}
	\sup_{n=0,1,\ldots,N}\|Y^{SRK}_n-Y(t_n)\|_{\mbf L^{2p}(\Omega)}\le Kh.
	\end{align*}
\end{theo}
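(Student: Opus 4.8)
The plan is to reduce the global estimate to two one-step local error bounds and then invoke the fundamental mean-square convergence theorem. Concretely, I would apply Theorem \ref{fundamental} with $\bar X_k=Y^{SRK}_k$ and $\widetilde X_k$ taken to be the exact flow $Y_{t,x}(t+h)$ of \eqref{mulSDE1}; the stability hypothesis (A3) then holds for $\widetilde X$ by the standard Lipschitz-in-initial-data estimates for \eqref{mulSDE2} under Assumption \ref{assum1}, and the moment bounds (A2) are supplied by Lemma \ref{SRK1bound} for $Y^{SRK}$ and by \eqref{Ybound} for the exact flow. With $p_2=\tfrac32$ and $p_1=2$, the conclusion $\|\bar X_k-\widetilde X_k\|_{\mbf L^{2p}(\Omega)}\le Kh^{p_2-1/2}=Kh$ is exactly the assertion. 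Thus the whole proof reduces to verifying (A1): the \emph{mean} bound $|\mbf E(Y^{SRK}_{t,y}(t+h)-Y_{t,y}(t+h))|\le K(1+|y|)h^{2}$ and the \emph{mean-square} bound $\|Y^{SRK}_{t,y}(t+h)-Y_{t,y}(t+h)\|_{\mbf L^{2p}(\Omega)}\le K(1+|y|)h^{3/2}$, both on the same Brownian path.

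To obtain these I would Taylor-expand the two one-step maps to matching order. For the exact flow, the It\^o--Taylor (Milstein) expansion of \eqref{mulSDE2}, using $\bar f=f+\tfrac12(\nabla g)g$ so that the $h$-parts of the drift and of the It\^o correction combine, gives
\begin{align*}
Y_{t,y}(t+h)=y+f(y)h+g(y)\Delta W+\tfrac12(\nabla g)(y)g(y)(\Delta W)^2+R,
\end{align*}
where the hypotheses $f\in\mbf C^2$, $g\in\mbf C^3$ and $\mcal D^2f,\mcal D^3g\in\mbf F$ are precisely what control the remainder, yielding $\|R\|_{\mbf L^{2p}(\Omega)}\le K(1+|y|)h^{3/2}$ and $|\mbf ER|\le K(1+|y|)h^2$ with polynomially growing constants. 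For the SRK map I would insert the a priori stage bound $|Z_i-y|\le C_0(1+|y|)(h+|\DW|)$ from Lemma \ref{solvability} into \eqref{Z} and expand $f(Z_i),g(Z_i)$ about $y$; the leading stage correction $Z_i-y=(Be)_ig(y)\DW+O(h)$ propagates through $\DW\sum_i\beta_ig(Z_i)$, and using $\alpha^\top e=\beta^\top e=1$ together with $\beta^\top(Be)=\tfrac12$ produces
\begin{align*}
Y^{SRK}_{t,y}(t+h)=y+f(y)h+g(y)\DW+\tfrac12(\nabla g)(y)g(y)(\DW)^2+\widetilde R,
\end{align*}
with $\widetilde R$ satisfying the same type of estimate (here $g\in\mbf C^3$, $\mcal D^3 g\in\mbf F$ handle the $(h+|\DW|)^3$-remainder from the cubic stage term, and $f\in\mbf C^2$, $\mcal D^2f\in\mbf F$ handle the $h$-weighted drift). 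The three order conditions are thus exactly what make the $O(\sqrt h)$ and $O(h)$ mismatches cancel.

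Subtracting, the one-step difference equals $(\DW-\Delta W)g(y)+\tfrac12(\nabla g)(y)g(y)\big((\DW)^2-(\Delta W)^2\big)+(\widetilde R-R)$. For the mean-square bound I would use \eqref{zeta1}: since $\DW-\Delta W=\sqrt h(\zeta-\xi)$, one gets $\|\DW-\Delta W\|_{\mbf L^{2p}(\Omega)}\le Kh^{1/2}(\mbf E|\xi-\zeta|^{2p})^{1/(2p)}\le Kh^{1/2+\kappa/(2p)}$, which is $\le Kh^{3/2}$ \emph{precisely when} $p\le\tfrac{\kappa}{2}$ --- this is the sole origin of the restriction on $p$. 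The quadratic mismatch is $O(h^{3/2})$ by \eqref{zeta2} (or by H\"older with \eqref{zeta1}), and $\widetilde R-R$ is $O(h^{3/2})$ by construction, giving (A1)'s mean-square part. For the mean part I would use that $(\DW-\Delta W)g(y)$ has zero mean by the symmetry of the truncation \eqref{zeta}, that $\mbf E[(\DW)^2-(\Delta W)^2]=h\,\mbf E[\zeta^2-\xi^2]=O(h^{3-\varepsilon})$ by \eqref{zeta3} with $m=2$ and $\kappa\ge2$, and that every surviving remainder term of size $h^{3/2}$ is odd in the (symmetric) increment and hence has vanishing expectation; what remains is $O(h^2)$.

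The main obstacle is the bookkeeping in the SRK expansion: because the stages $Z_i$ are defined implicitly and couple only through the truncated increment $\DW$, one must expand \eqref{Z} consistently to order $h^{3/2}$ in $\mbf L^{2p}$ and to order $h^2$ in mean, carry the polynomial-growth constants so that the $\mbf F$-hypotheses on $\mcal D^2f,\mcal D^3g$ close the remainder estimates, and verify that after imposing the three order conditions the leftover $O(h^{3/2})$ terms are all odd in the increment so that their means are genuinely $O(h^2)$. Once (A1) is in place with $p_1=2$, $p_2=\tfrac32$, Theorem \ref{fundamental} delivers $\sup_{n}\|Y^{SRK}_n-Y(t_n)\|_{\mbf L^{2p}(\Omega)}\le Kh$ for every $p\in[1,\tfrac{\kappa}{2}]$.
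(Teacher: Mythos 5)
Your proposal is correct and takes essentially the same route as the paper: reduce to Theorem \ref{fundamental} with the exact flow as $\widetilde X$ (so (A3) follows from \cite[Lemma 2.2]{ZhangZQ13} and (A2) from Lemma \ref{SRK1bound} and \eqref{Ybound}), and verify (A1) with $p_1=2$, $p_2=\tfrac32$ by matching the SRK bootstrap expansion against the It\^o--Taylor expansion through the Milstein term, with the order conditions cancelling the leading terms and \eqref{zeta1}--\eqref{zeta3} supplying the truncation estimates that force $p\le\tfrac{\kappa}{2}$. The only cosmetic differences are that the paper tracks the means of the $h^{3/2}$-sized remainders by explicit bookkeeping (e.g.\ $|\mbf E(\DW R^3_{g,i})|\le K(1+|y|^\iota)h^2$) rather than a blanket symmetry argument, and its constants grow polynomially, $K(1+|y|^\iota)$, rather than linearly.
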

\noindent Theorem \ref{SRK1converge} can be proved based on Theorem \ref{fundamental} and \eqref{zeta1}-\eqref{zeta3}, whose proof is postponed to the appendix.

\subsection{An appurtenant numerical method}
For $a=(a_1,\ldots,a_d)^\top,~b=(b_1,\ldots,b_d)^\top\in\mbb R^d$, denote $a*b\in\mbb R^d$ with $(a*b)_i=a_ib_i$, $i=1,\ldots,d$ and $a^m\in\mbb R^d$ with $(a^m)_i=a_i^m$. The following lemma gives a sufficient  expansion of $Y_{t,y}^{SRK}(t+h)$,  helping us to find the   appurtenant numerical method as in Framework \ref{frame}, whose proof is deferred to the appendix.
\begin{lem}\label{YSRKexpan}
Let Assumption \ref{assum1} hold. Let $f\in\mbf C^3(\mbb R^d)$, $g\in\mbf C^4(\mbb R^d)$ and $\mcal D^{3}f,\,\mcal D^4g\in \mbf F$. Moreover, let
$\alpha^\top e=\beta^\top e=1$, $\beta^\top(Be)=\frac{1}{2}$. Then $Y^{SRK}_{t,y}(t+h)$ has following representation
\begin{align}
Y^{SRK}_{t,y}(t+h)=&\;y+\DW g(y)+hf(y)+\frac{1}{2}\DW^2\nabla g(y)g(y)+\DW hF_1(y)+\DW^3 F_2(y)\notag\\
&\;+h^2\alpha^\top(Ae)\nabla f(y)f(y)+\DW^2hF_3(y)+\DW^4F_4(y)+R^2_{Y^{SRK}},\label{YSRKexpansion}
\end{align}
where
\begin{align*}
F_1(y)=&\; \alpha^\top(Be)\nabla f(y)g(y)+\beta^\top(Ae)\nabla g(y)f(y),\\
F_2(y)=&\;\beta^\top(B(Be))(\nabla g(y))^2g(y)+\frac{1}{2}\beta^\top(Be)^2\mcal D^2g(y)(g(y),g(y)),\\
F_3(y)=&\;\alpha^\top(B(Be))\nabla f(y)\nabla g(y)g(y)+\beta^\top(A(Be))\nabla g(y)\nabla f(y)g(y)+\beta^\top(B(Ae))(\nabla g(y))^2f(y)\\
&\;+\frac{1}{2}\alpha^\top(Be)^2\mcal D^2f(y)(g(y),g(y))+\beta^\top((Ae)*(Be))\mcal D^2g(y)(f(y),g(y)),\\
F_4(y)=&\;\beta^\top(B(B(Be)))(\nabla g(y))^3g(y)+\frac{1}{2}\beta^\top(B(Be)^2)\nabla g(y)\mcal D^2g(y)(g(y),g(y))\\
&\;+\beta^\top((Be)*(B(Be)))\mcal D^2g(y)(g(y),\nabla g(y)g(y))+\frac{1}{6}\beta^\top(Be)^3\mcal D^3g(y)(g(y),g(y),g(y)),
\end{align*}
and there is  $\alpha'\ge 1$ such that for any $p\ge1$, $\|R^2_{Y^{SRK}}\|_{\mbf L^p(\Omega)}\le K(p)(1+|y|^{\alpha'})h^{5/2}$.  
\end{lem}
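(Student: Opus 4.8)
The plan is to push the Taylor-expansion argument of Theorem \ref{SRK1converge} one order further, tracking every monomial in the pair $(\DW,h)$ up to total order two under the bookkeeping convention $\DW\sim h^{1/2}$ (so that $\DW^k h^l$ has order $\tfrac{k}{2}+l$), and absorbing everything of order $\ge\tfrac52$ into $R^2_{Y^{SRK}}$. Since $Y^{SRK}_{t,y}(t+h)=y+h\sum_i\alpha_if(Z_i)+\DW\sum_i\beta_ig(Z_i)$ and $\DW\sim h^{1/2}$, I need $g(Z_i)$ expanded through order $\tfrac32$ (i.e.\ through the monomials $1,\DW,h,\DW^2,\DW h,\DW^3$, with an $\mbf L^p$-remainder of order $2$) and $f(Z_i)$ expanded through order $1$ (through $1,\DW,h,\DW^2$, with remainder of order $\tfrac32$). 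These are precisely the orders for which the hypotheses $g\in\mbf C^4$, $\mcal D^4g\in\mbf F$ and $f\in\mbf C^3$, $\mcal D^3f\in\mbf F$ supply, respectively, a third- and a second-order Taylor formula with polynomially bounded remainder.

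The core step is a bootstrapped expansion of the stage increments $Z_i-y$. Starting from the fixed-point identity \eqref{Zi}, I write $Z_i-y=h(Ae)_if(y)+\DW(Be)_ig(y)+h\sum_ja_{ij}(f(Z_j)-f(y))+\DW\sum_jb_{ij}(g(Z_j)-g(y))$ and repeatedly reinsert the linear Taylor approximations $g(Z_j)-g(y)\approx\nabla g(y)(Z_j-y)$ and $f(Z_j)-f(y)\approx\nabla f(y)(Z_j-y)$, exactly as in the passage from \eqref{Zi1} to \eqref{Zi2}, using the crude bound of Lemma \ref{solvability} to initialize. Each reinsertion raises the accuracy by half an order, so iterating produces an expansion of $Z_i-y$ through order $\tfrac32$ whose leading monomials are $\DW(Be)_ig$, $h(Ae)_if$ and $\DW^2(B(Be))_i\nabla g\,g$, together with explicit order-$\tfrac32$ terms of type $\DW h$ and $\DW^3$, and an $\mbf L^p$-remainder of order $2$ controlled by Lemma \ref{solvability} and the polynomial-growth hypotheses.

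With this in hand I insert the $Z_i-y$ expansion into the third-order Taylor formula for $g(Z_i)$ and the second-order Taylor formula for $f(Z_i)$, keeping only monomials up to the required order; here $\mcal D^2g$, $\mcal D^3g$ and $\mcal D^2f$ enter through the quadratic and cubic Taylor terms paired with the appropriate powers of the leading stage increment $\DW(Be)_ig$. Assembling $Y^{SRK}_{t,y}(t+h)=y+h\sum_i\alpha_if(Z_i)+\DW\sum_i\beta_ig(Z_i)$ and collecting coefficients monomial by monomial, the order conditions $\alpha^\top e=\beta^\top e=1$ and $\beta^\top(Be)=\tfrac12$ collapse the lowest-order terms to $y+\DW g+hf+\tfrac12\DW^2\nabla g\,g$, while the surviving monomials $\DW h$, $\DW^3$, $h^2$, $\DW^2h$, $\DW^4$ read off $F_1,\dots,F_4$ as the stated contractions of the Butcher arrays against $f,g$ and their derivatives. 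For instance $\DW^3F_2$ arises from $\DW\sum_i\beta_i$ applied to the $\DW^2$-part of $g(Z_i)$, which splits into the linear-Taylor contribution $\beta^\top(B(Be))(\nabla g)^2g$ and the quadratic-Taylor contribution $\tfrac12\beta^\top((Be)*(Be))\mcal D^2g(g,g)$.

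Finally, the remainder estimate follows by collecting every discarded piece---the higher Taylor remainders of $f,g$, the products of sub-leading stage terms, and the truncation corrections---and bounding each in $\mbf L^p(\Omega)$ by Lemma \ref{solvability} (so that $\|Z_i-y\|_{\mbf L^{q}(\Omega)}\le K(1+|y|)h^{1/2}$ for every $q$), the moment bounds $\mbf E|\DW|^q\le K(q)h^{q/2}$, the polynomial growth $\mcal D^3f,\mcal D^4g\in\mbf F$ and the H\"older inequality; each such term carries order at least $\tfrac52$, giving $\|R^2_{Y^{SRK}}\|_{\mbf L^p(\Omega)}\le K(p)(1+|y|^{\alpha'})h^{5/2}$. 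I expect the main obstacle to be the purely combinatorial bookkeeping: correctly matching every product of Butcher coefficients to its monomial and verifying that no genuine order-two contribution is mistakenly absorbed into the remainder, since the order-two coefficients $F_3,F_4$ require the full order-$\tfrac32$ stage expansion and hence the most delicate tracking.
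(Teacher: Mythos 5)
Your proposal is correct and follows essentially the same route as the paper's proof: a bootstrapped expansion of the stage values $Z_i-y$ (initialized by Lemma \ref{solvability} and refined half an order per reinsertion, exactly as in the paper's passage \eqref{Zi1}$\to$\eqref{Zi2}$\to$\eqref{Zi3}), inserted into the third-order Taylor formula for $g(Z_i)$ and the second-order one for $f(Z_i)$, then assembled in \eqref{SRK2} with the order conditions collapsing the leading terms and the discarded pieces bounded in $\mbf L^p$ via Lemma \ref{solvability}, the moment bounds on $\DW$, and the polynomial-growth hypotheses. The paper carries out the identical bookkeeping, just with named remainders ($R^3_{Z_i}$, $R^3_{f,i}$, $R^4_{g,i}$) at each stage.
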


According to \eqref{YSRKexpansion}, we can define the candidate of the appurtenant method of the SRK method \eqref{SRK1}. For this end, let $\bar{Y}_{t,y}(t+h)$ be the one-step approximation defined by
\begin{align}\label{Ybar}
	\bar{Y}_{t,y}(t+h)=&\;y+\Delta W g(y)+hf(y)+\frac{1}{2}\Delta W^2\nabla g(y)g(y)+\Delta W hF_1(y)+\Delta W^3 F_2(y)\notag\\
	&\;+h^2\alpha^\top(Ae)\nabla f(y)f(y)+\Delta W^2hF_3(y)+\Delta W^4F_4(y).
\end{align}
Then, we have following properties.
\begin{lem}\label{YSRK-Ybar}
	Let the assumptions of Lemma \ref{YSRKexpan} hold. If $\kappa\ge 3$, then there is some $\alpha''\ge 1$ such that for $0\le t\le T-h$ and $y\in\mbb R^d$,
	\begin{align*}
		\big|\mbf E(Y^{SRK}_{t,y}(t+h)-\bar{Y}_{t,y}(t+h))\big|&\le K(1+|y|^{\alpha''})h^{5/2}, \\
		(\mbf E|Y^{SRK}_{t,y}(t+h)-\bar{Y}_{t,y}(t+h)|^2)^{1/2}&\le K(1+|y|^{\alpha''})h^2.
	\end{align*}
\end{lem}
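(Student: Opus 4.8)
The plan is to subtract the expansion \eqref{YSRKexpansion} of $Y^{SRK}_{t,y}(t+h)$ furnished by Lemma \ref{YSRKexpan} from the definition \eqref{Ybar} of $\bar Y_{t,y}(t+h)$, and to estimate the resulting difference term by term. The two expressions share the identical deterministic terms $y$, $hf(y)$ and $h^2\alpha^\top(Ae)\nabla f(y)f(y)$, and the same coefficient functions $g$, $\nabla g\,g$, $F_1,\ldots,F_4$ multiply matching monomials in the noise increment; the sole difference is that the SRK expansion uses the truncated increment $\DW$ whereas $\bar Y$ uses $\Delta W$. Hence
\begin{align*}
Y^{SRK}_{t,y}(t+h)-\bar Y_{t,y}(t+h)=&\;(\DW-\Delta W)g(y)+\tfrac12(\DW^2-\Delta W^2)\nabla g(y)g(y)+h(\DW-\Delta W)F_1(y)\\
&\;+(\DW^3-\Delta W^3)F_2(y)+h(\DW^2-\Delta W^2)F_3(y)+(\DW^4-\Delta W^4)F_4(y)+R^2_{Y^{SRK}},
\end{align*}
so the whole estimate reduces to controlling the truncation differences $\DW^m-\Delta W^m$ against the growth of the coefficients, together with the remainder $R^2_{Y^{SRK}}$ already bounded in Lemma \ref{YSRKexpan}.

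First I would record that every coefficient above grows polynomially in $y$: $g$ is Lipschitz and $\nabla g$ is bounded, while $F_1,\ldots,F_4$ are built from $f,g$ and their derivatives, all lying in $\mbf F$; thus each is bounded by $K(1+|y|^\iota)$ for some $\iota$. Writing $\DW^m-\Delta W^m=h^{m/2}(\zeta^m-\xi^m)$, I would treat the second-moment bound using \eqref{zeta1} for the first-order term and \eqref{zeta2} for the higher ones. The binding term is the leading one: by \eqref{zeta1}, $\|\DW-\Delta W\|_{\mbf L^2(\Omega)}=\sqrt h\,\|\xi-\zeta\|_{\mbf L^2(\Omega)}\le Kh^{(1+\kappa)/2}$, so that $\|(\DW-\Delta W)g(y)\|_{\mbf L^2(\Omega)}\le K(1+|y|)h^{(1+\kappa)/2}$, which is $\le K(1+|y|)h^2$ exactly when $\kappa\ge3$. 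Every remaining truncation-difference term carries at least one extra factor $h$ or a higher power of $\zeta^m-\xi^m$, so by \eqref{zeta2} each is $O(h^{5/2-\varepsilon})$ and hence dominated by $Kh^2$ for $h\le1$; combined with $\|R^2_{Y^{SRK}}\|_{\mbf L^2(\Omega)}\le K(1+|y|^{\alpha'})h^{5/2}$ this yields the second-moment estimate.

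For the expectation bound I would pull the deterministic coefficients out of $\mbf E$ and apply \eqref{zeta3}, namely $|\mbf E(\DW^m-\Delta W^m)|=h^{m/2}|\mbf E(\zeta^m-\xi^m)|\le Kh^{m/2+\kappa-\varepsilon}$. Under $\kappa\ge3$ the leading term is then of order $h^{7/2-\varepsilon}$ and all truncation-difference contributions are $o(h^{5/2})$, so the only term of size $h^{5/2}$ is $|\mbf E R^2_{Y^{SRK}}|\le\|R^2_{Y^{SRK}}\|_{\mbf L^1(\Omega)}\le K(1+|y|^{\alpha'})h^{5/2}$, which gives the first estimate with $\alpha''$ taken to be the largest growth exponent appearing. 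I do not expect a serious obstacle, since the real work was front-loaded into Lemma \ref{YSRKexpan}; the one point to watch is that the truncation enters only through the moments of $\xi-\zeta$, and the sharp threshold $\kappa\ge3$ is dictated precisely by requiring the $\mbf L^2$ norm of the first-order term $(\DW-\Delta W)g(y)$ to be of order $h^2$ rather than larger.
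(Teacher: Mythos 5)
Your proposal is correct and follows essentially the same route as the paper's own proof: subtract the expansion \eqref{YSRKexpansion} from \eqref{Ybar} so that the difference reduces to the truncation gaps $\DW^m-\Delta W^m$ ($m=1,\ldots,4$) multiplying the polynomially bounded coefficients plus the remainder $R^2_{Y^{SRK}}$, then invoke \eqref{zeta1} for the leading first-order term, \eqref{zeta2} for the higher-order second-moment bounds, and \eqref{zeta3} for the expectation bounds, exactly as the paper does. Your identification of $(\DW-\Delta W)g(y)$ as the binding term whose $\mbf L^2$ norm $Kh^{(1+\kappa)/2}$ dictates the threshold $\kappa\ge 3$ is a accurate (and slightly more explicit) account of the same estimates.
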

\begin{proof}
By \eqref{YSRKexpansion} and \eqref{Ybar}, one has
\begin{align*}
	&\;Y^{SRK}_{t,y}(t+h)-\bar{Y}_{t,y}(t+h)=(\DW-\Delta W)g(y)+\frac{1}{2}(\DW^2-\Delta W^2)\nabla g(y)g(y)+(\DW-\Delta W) hF_1(y)\\
	&\;+(\DW^3-\Delta W^3) F_2(y)+(\DW^2-\Delta W^2)hF_3(y)+(\DW^4-\Delta W^4)F_4(y)+R^2_{Y^{SRK}}.
\end{align*}
Using \eqref{zeta3} gives $|\mbf E(\DW^m-\Delta W^m)|\le Kh^{\frac{m}{2}+\kappa-\varepsilon}\le Kh^{4-\varepsilon}$ for $m\ge 2$ and $\kappa\ge 3$. Applying \eqref{zeta1}, we obtain $(\mbf E(\DW-\Delta W)^2)^{1/2}\le Kh^{\frac{\kappa+1}{2}}\le Kh^2$. Further, it follows from \eqref{zeta2} that  $(\mbf E(\DW^m-\Delta W^m)^2)^{1/2}\le Kh^{\frac{m+\kappa}{2}-\varepsilon}\le Kh^{5/2-\varepsilon}$ for $m\ge 2$.
Combining the above estimates, $\|R^2_{Y^{SRK}}\|_{\mbf L^p(\Omega)}\le K(p)(1+|y|^{\alpha'})h^{5/2}$ (see Lemma \ref{YSRKexpan}), and $F_i\in\mbf F$ for $i=1,2,3,4$, we  complete the proof.
\end{proof}
Let $\{\bar{Y}_n\}_{n=0}^N$ be the numerical solution  generated by $\bar Y_{t,h}(t+h)$, i.e., $\bar{Y}_{n+1}=\bar Y_{t_n,\bar Y_n}(t_{n+1})$, $n=0,\ldots,N-1$ and $\bar{Y}_0=Y_0$.
Based on Lemma \ref{YSRK-Ybar} and Theorem \ref{fundamental}, it can be shown that the mean-square error order between $\bar{Y}_n-Y^{SRK}_n$ is $1.5$, and $\{\bar{Y}_n\}_{n=0}^N$ has mean-square  order $1$ when approximating \eqref{mulSDE1}. This is to say, $\{\bar{Y}_n\}_{n=0}^N$ is a candidate of  the appurtenant method of the SRK method \eqref{SRK1}.
However, the terms $\Delta W^{i}$, $i=2,3,4$ in \eqref{Ybar} are very inconvenient for us to give a proper continuous version of $\{\bar{Y}_n\}_{n=0}^N$ for studying its asymptotic error distribution. For this end, we use the multiple It\^o integral to represent $\Delta W^{i}$, $i=2,3,4$, and give the  appurtenant method of \eqref{SRK1}.

First, it holds that
\begin{align}\label{DW2}
	\Delta W^2=(W(t+h)-W(t))^2=2\int_t^{t+h}(W(r)-W(t))\ud W(r)+h.
\end{align}
Then by the It\^o formula, 
\begin{align*}
	(W(s)-W(t))^3=3\int_t^s(W(r)-W(t))^2\ud W(r)+3\int_t^s(W(r)-W(t))\ud r,
\end{align*} 
which, along with \eqref{DW2}, yields
\begin{align*}
(W(s)-W(t))^3=6\int_t^s\int_t^r(W(u)-W(t))\ud W(u)\ud W(r)+3(s-t)(W(s)-W(t)).	
\end{align*} 
Accordingly,
\begin{align}\label{DW3}
	\Delta W^3=6\int_t^{t+h}\int_t^r(W(u)-W(t))\ud W(u)\ud W(r)+3h\Delta W.
\end{align}
Similarly, applying the It\^o formula and \eqref{DW2}, one has
\begin{align}
	\Delta W^4=4\int_t^{t+h}(W(s)-W(t))^3\ud W(s)+12\int_t^{t+h}\int_t^s(W(r)-W(t))\ud W(r)\ud s+3h^2. \label{DW4}
\end{align}
Plugging \eqref{DW2}-\eqref{DW4} into \eqref{Ybar} leads to
\begin{align}
	&\;\bar{Y}_{t,y}(t+h)=y+\Delta Wg(y)+h\bar f(y)+\nabla g(y)g(y)\int_t^{t+h}(W(s)-W(t))\ud W(s) \notag\\
&\;+\Delta Wh(F_1(y)+3F_2(y))+6 F_2(y)\int_t^{t+h}\int_t^s(W(r)-W(t))\ud W(r)\ud W(s)\notag\\
&\;+h^2[\alpha^\top(Ae)\nabla f(y)f(y)+F_3(y)+3F_4(y)]+R_{\bar Y},\label{Ybar2}
\end{align}
where $\bar{f}$ has been defined in \eqref{mulSDE2}  and
\begin{align*}
R_{\bar Y}:=&\;2h\int_t^{t+h}(W(s)-W(t))\ud W(s) F_3(y)+F_4(y)\Big[4\int_t^{t+h}(W(s)-W(t))^3\ud W(s)\\
&\;+12\int_t^{t+h}\int_t^s(W(r)-W(t))\ud W(r)\ud s\Big].
\end{align*}

Ignoring the remainder $R_{\bar Y}$, we introduce the one-step approximation  $\widetilde Y_{t,y}(t+h)$:
 \begin{align}
 	&\;\widetilde{Y}_{t,y}(t+h)=y+\Delta Wg(y)+h\bar f(y)+\nabla g(y)g(y)\int_t^{t+h}(W(s)-W(t))\ud W(s) \notag\\
 	&\;+\Delta Wh(F_1(y)+3F_2(y))+6 F_2(y)\int_t^{t+h}\int_t^s(W(r)-W(t))\ud W(r)\ud W(s)\notag\\
 	&\;+h^2[\alpha^\top(Ae)\nabla f(y)f(y)+F_3(y)+3F_4(y)]. \label{Ytilde}
 \end{align}
Further, define the numerical solution $\{\widetilde{Y}_{n}\}_{n=0}^N$:
\begin{align}\label{Ytilden}
	\widetilde Y_{n+1}=\widetilde{Y}_{t_n,\widetilde Y_n}(t_{n+1}),~n=0,\ldots,N-1,~\widetilde Y_0=Y_0.
\end{align} 
Next we show that \eqref{Ytilden} is an appurtenant method of the SRK method \eqref{SRK1}.
Denote $\mcal H=\{f,g,(\nabla f)f,(\nabla f)g,(\nabla g)f,(\nabla g)g,(\nabla g)^2f,(\nabla g)^2g,(\nabla f\nabla g)g,(\nabla g\nabla f)g,\mcal D^2f(g,g),\mcal D^2g(g,g),\\
\mcal D^2g(f,g),(\nabla g)^3g,\nabla g\big(\mcal D^2g(g,g)\big),\mcal D^2g(g,(\nabla g)g), \mcal D^3g(g,g,g)\}$.

\begin{lem}\label{YSRKn-Ytilden}
Let the assumptions of Lemma \ref{YSRKexpan} hold and $\kappa\ge 3$. If every function from $\mcal H$ is Lipschitz continuous,  then 
\begin{align*}
\sup_{n=0,\ldots,N}(\mbf E|Y^{SRK}_n-\widetilde{Y}_n|^2)^{1/2}\le Kh^{3/2}. 
\end{align*} 	
\end{lem}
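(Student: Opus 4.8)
The plan is to apply Theorem \ref{fundamental} with $p=1$, taking $\bar{X}_{t,x}(t+h)=Y^{SRK}_{t,x}(t+h)$ and $\widetilde{X}_{t,x}(t+h)=\widetilde{Y}_{t,x}(t+h)$, and verifying hypotheses (A1)--(A3). Condition (A1) should come almost immediately. First I would note that the explicit one-step map $\widetilde{Y}_{t,y}(t+h)$ differs from $\bar{Y}_{t,y}(t+h)$ only through the dropped remainder $R_{\bar Y}$ in \eqref{Ybar2}--\eqref{Ytilde}. Since $R_{\bar Y}$ is built from the iterated It\^o integrals $\int_t^{t+h}(W(s)-W(t))\ud W(s)$, $\int_t^{t+h}(W(s)-W(t))^3\ud W(s)$ and $\int_t^{t+h}\int_t^s(W(r)-W(t))\ud W(r)\ud s$, each multiplied by $h$ or by $F_3,F_4\in\mbf F$, the BDG and H\"older inequalities give $\mbf E R_{\bar Y}=0$ (each term is a martingale increment or a time integral of one) and $\|R_{\bar Y}\|_{\mbf L^{2}(\Omega)}\le K(1+|y|^\iota)h^{2}$. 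Combining this with Lemma \ref{YSRK-Ybar}, which already controls $Y^{SRK}_{t,y}(t+h)-\bar{Y}_{t,y}(t+h)$ with orders $5/2$ (mean) and $2$ ($\mbf L^2$), yields the one-step estimates $|\mbf E(Y^{SRK}_{t,y}-\widetilde{Y}_{t,y})|\le K(1+|y|^{\alpha''})h^{2}$ and $\|Y^{SRK}_{t,y}-\widetilde{Y}_{t,y}\|_{\mbf L^2(\Omega)}\le K(1+|y|^{\alpha''})h^{2}$. These match (A1) with $p_2=2$, $p_1=2$, and the final bound $h^{p_2-1/2}=h^{3/2}$ is exactly the claim.

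Next I would verify (A2), the uniform moment boundedness of both chains. For $\{Y^{SRK}_n\}$ this is Lemma \ref{SRK1bound}. For $\{\widetilde{Y}_n\}$ one argues directly from the explicit form \eqref{Ytilde}: using the Lipschitz (hence linear-growth) assumption on every function in $\mcal H$ — which is precisely why the hypothesis ``every function from $\mcal H$ is Lipschitz continuous'' is imposed — together with $F_1,F_2,F_3,F_4\in\mbf F$ expressed through members of $\mcal H$, one gets $|\widetilde{Y}_{t,y}(t+h)-y|\le K(1+|y|)(h+|\Delta W|+|\Delta W|^2+\cdots)$ and in particular $|\mbf E(\widetilde{Y}_{t,y}(t+h)-y)|\le K(1+|y|)h$. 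Then \cite[Page 102, Lemma 2.2]{Milsteinbook} and the H\"older inequality give $\sup_n\mbf E|\widetilde{Y}_n|^q\le K(q,T)$, exactly as in Lemma \ref{SRK1bound}.

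For (A3) I would supply the required one-step Lipschitz-in-initial-data representation for the explicit map $\widetilde{Y}$. Writing $V:=\widetilde{Y}_{t,x}(t+h)-\widetilde{Y}_{t,y}(t+h)-(x-y)$, one reads off from \eqref{Ytilde} that $V$ is a sum of increments of the form (coefficient difference)$\times$(It\^o integral factor), where each coefficient difference such as $g(x)-g(y)$, $\bar f(x)-\bar f(y)$, $\nabla g(x)g(x)-\nabla g(y)g(y)$, $F_i(x)-F_i(y)$ is controlled by $K(1+|x|^\iota+|y|^\iota)|x-y|$ using the Lipschitz property of the functions in $\mcal H$. Then $\mbf E|V|^{2}\le K(1+|x|^\beta+|y|^\beta)|x-y|^{2}h$ follows because every integral factor carries at least one factor of order $h^{1/2}$ in $\mbf L^2$; and $\mbf E|\widetilde{Y}_{t,x}(t+h)-\widetilde{Y}_{t,y}(t+h)|^{2}\le|x-y|^{2}(1+Kh)$ follows by expanding the square, isolating the leading $|x-y|^2$ term, and bounding the cross and higher terms by $Kh|x-y|^2$. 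With (A1)--(A3) in hand, Theorem \ref{fundamental} delivers $\sup_{n}(\mbf E|Y^{SRK}_n-\widetilde{Y}_n|^{2})^{1/2}\le Kh^{p_2-1/2}=Kh^{3/2}$.

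The main obstacle I anticipate is not any single estimate but the bookkeeping in (A1): I must confirm that dropping $R_{\bar Y}$ genuinely costs nothing below order $h^2$ in $\mbf L^2$ and order $h^{5/2}$ in the mean. The delicate point is that $R_{\bar Y}$ contains the stochastic integral $\int_t^{t+h}(W(s)-W(t))^3\ud W(s)$; by BDG its $\mbf L^2$-norm is $(\mbf E\int_t^{t+h}(W(s)-W(t))^6\ud s)^{1/2}\sim h^{2}$, so the $F_4$ term is exactly order $h^{2}$ and no worse, which is consistent with (A1) requiring only $p_2=2$ (not more). One must also check that the $h$-prefactored term $2h\int_t^{t+h}(W(s)-W(t))\ud W(s)F_3(y)$ is of order $h^{2}$ in $\mbf L^2$ and has zero mean, and that the Riemann-type term $\int_t^{t+h}\int_t^s(W(r)-W(t))\ud W(r)\ud s$ is of order $h^{2}$ in $\mbf L^2$ with zero mean. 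Since all these match the required $h^{2}$ in $\mbf L^2$ and zero mean (hence no contribution to the $h^{5/2}$ mean bound), combining them with Lemma \ref{YSRK-Ybar} closes (A1) cleanly.
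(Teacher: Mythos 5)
Your proposal follows the paper's proof essentially step for step: verify (A1) by combining Lemma \ref{YSRK-Ybar} with the observation that $\bar Y_{t,y}(t+h)-\widetilde Y_{t,y}(t+h)=R_{\bar Y}$ has zero mean and $\mbf L^2$-size $K(1+|y|^\iota)h^2$, verify (A2) via Lemma \ref{SRK1bound} plus a direct moment argument for $\{\widetilde Y_n\}$ using \cite[Page 102, Lemma 2.2]{Milsteinbook}, verify (A3) from the explicit form \eqref{Ytilde}, and conclude with Theorem \ref{fundamental}; this is exactly the paper's route. One slip needs fixing: you assert the one-step estimates ``match (A1) with $p_2=2$, $p_1=2$,'' but (A1) requires $p_1\ge p_2+\frac{1}{2}$, so that pairing is inadmissible and, if one weakened $p_2$ to make it admissible, the conclusion would drop below $h^{3/2}$. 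The repair is already contained in your own argument: since $\mbf E R_{\bar Y}=0$, the bias of $Y^{SRK}_{t,y}(t+h)-\widetilde Y_{t,y}(t+h)$ inherits the $K(1+|y|^{\alpha''})h^{5/2}$ bound from Lemma \ref{YSRK-Ybar} (as your final paragraph correctly notes), giving $p_1=\frac{5}{2}$, $p_2=2$ and hence the claimed $h^{p_2-1/2}=h^{3/2}$.
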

\begin{proof}
By \eqref{Ybar2} and \eqref{Ytilde}, one has
\begin{align*}
	\mbf E(\bar{Y}_{t,y}(t+h)-\widetilde{Y}_{t,y}(t+h))=0,~(\mbf E|\bar{Y}_{t,y}(t+h)-\widetilde{Y}_{t,y}(t+h)|^2)^{1/2}\le K(1+|y|)h^2,
\end{align*}
which, together with Lemma \ref{YSRK-Ybar} and the Minkowski inequality, yields
\begin{align*}
	\big|\mbf E(Y^{SRK}_{t,y}(t+h)-\widetilde{Y}_{t,y}(t+h))\big|&\le K(1+|y|^{\alpha''})h^{5/2}, \\
	(\mbf E|Y^{SRK}_{t,y}(t+h)-\widetilde{Y}_{t,y}(t+h)|^2)^{1/2}&\le K(1+|y|^{\alpha''})h^2.
\end{align*}
This verifies the condition (A1) of Theorem \ref{fundamental}.

Denote $P(y)=\widetilde{Y}_{t,y}(t+h)-y-\Delta W g(y)$. Under the assumptions on $f$ and g, we have that  $\bar f$, $\nabla gg$, $\nabla ff$ and $F_i$, $i=1,2,3,4$ are Lipschitz continuous and thus grow at most linearly. Hence, it follows from \eqref{Ytilde}  that
$|P(y)|\le (1+|y|)M$ where $M$ is a non-negative random variable with $\|M\|_{\mbf L^p(\Omega)}\le K(p)h$ for $p\ge 1$. Then, one has
\begin{align*}
	|\mbf E(\widetilde{Y}_{t,y}(t+h)-y)|=|\mbf E(\Delta Wg(y))+\mbf EP(y)|\le K(1+|y|)h.
\end{align*}
In addition, $|\widetilde{Y}_{t,y}(t+h)-y|\le K(1+|y|)|\Delta W|+(1+|y|)M\le KM'(1+|y|)h^{1/2}$, where $M':=h^{-1/2}(|\Delta W|+M)$. Since $\mbf E|M'|^p<\infty$ for any $p\ge 1$, we obtain that for any $q\ge 1$, 
\begin{align}\label{Ytildenbound}
	\sup_{n=0,1,\ldots,N}\mbf E|\widetilde{Y}_n|^q\le K(q)
\end{align}
due to \cite[Page 102, Lemma 2.2]{Milsteinbook}. This combined with Lemma \ref{SRK1bound} verifies the condition (A2) of Theorem \ref{fundamental}. 

It follows from the definition of $P$ and \eqref{Ytilde}  that $|P(x)-P(y)|\le K|x-y|M''$ with $\|M''\|_{\mbf L^p(\Omega)}\le K(p)h$ for $p\ge 1$.
Further, we have that $\widetilde{Y}_{t,x}(t+h)-\widetilde{Y}_{t,y}(t+h)=x-y+\Delta W(g(x)-g(y))+P(x)-P(y)$. Thus, a direct computation leads to
\begin{align*}
	\mbf E|\widetilde{Y}_{t,x}(t+h)-\widetilde{Y}_{t,y}(t+h)|^2&\le |x-y|^2(1+Kh).\\
	\mbf E|\Delta W(g(x)-g(y))+P(x)-P(y)|^2&\le K|x-y|^2h.
\end{align*}
Thus the condition (A3) of Theorem \ref{fundamental} is also satisfied. Finally the proof is finished as a result of  Theorem \ref{fundamental}.
\end{proof} 
 
\begin{rem}
We remark that  a sufficient condition, to make every function belonging to $\mcal H$	Lipschitz continuous, is that $f$ and $g$ are bounded, $f\in\mbf C_b^3(\mbb R^d)$, and $g\in\mbf C_b^4(\mbb R^d)$. In addition, an example of unbounded $f$ and $g$ satisfying the conditions of Lemma \ref{YSRKn-Ytilden} is $f(x)=g(x)=\ln(1+x^2)$, $x\in\mbb R$.
\end{rem}

By Theorem \ref{SRK1converge} and Lemma \ref{YSRKn-Ytilden}, we have that $\widetilde{Y}_n$ converges to $Y(t_n)$ of mean-square order $1$. Thus, $\{\widetilde{Y}_n\}_{n=0}^N$  is an appurtenant method of \eqref{SRK1}. In fact, we can prove that $\sup_{n=0,1,\ldots,N}\|\widetilde{Y}_n-Y(t_n)\|_{\mbf L^p(\Omega)}\le K(p)h$ for any $p\ge 2$.  
\begin{lem}\label{Y-Ytilde}
Let the assumptions of Lemma \ref{YSRKexpan} hold. If every function from $\mcal H$ is Lipschitz continuous, then for any $p\ge 1$,
\begin{align*}
\sup_{n=0,1,\ldots,N}\|\widetilde{Y}_n-Y(t_n)\|_{\mbf L^p(\Omega)}\le K(p)h.
\end{align*}
\end{lem}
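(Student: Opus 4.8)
The plan is to apply Theorem~\ref{fundamental} \emph{directly} to the two Markov chains $\bar X_k=Y(t_k)$, whose one-step map is the exact flow $Y_{t,x}(t+h)$, and $\widetilde X_k=\widetilde Y_k$ from \eqref{Ytilden}, whose one-step map is $\widetilde Y_{t,x}(t+h)$ in \eqref{Ytilde}. I would aim at $p_2=\tfrac32$ and $p_1=2$, so that the conclusion of Theorem~\ref{fundamental} reads $[\mbf E|Y(t_k)-\widetilde Y_k|^{2p}]^{1/(2p)}\le Kh^{p_2-1/2}=Kh$. The decisive reason for comparing $\widetilde Y$ against the exact solution, instead of routing through $Y^{SRK}$ by Theorem~\ref{SRK1converge} and Lemma~\ref{YSRKn-Ytilden}, is that both $Y_{t,x}(t+h)$ and $\widetilde Y_{t,x}(t+h)$ are built solely from the genuine increment $\Delta W$ and iterated It\^o integrals, with \emph{no} truncated increment $\DW$. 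Hence the one-step estimates carry no $\kappa$-dependent loss and hold in every $\mbf L^{2p}(\Omega)$; this is precisely why the present lemma, unlike Lemma~\ref{YSRKn-Ytilden}, imposes no lower bound on $\kappa$ and delivers all $p\ge1$. Since Theorem~\ref{fundamental} is stated for a fixed $p$, I would run the argument for each $p\ge1$ separately and cover $1\le p<2$ by the monotonicity of $\mbf L^p$-norms.

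For condition~(A1), I would subtract the exact expansion \eqref{Yexpan1} from the definition \eqref{Ytilde}. Since $\int_t^{t+h}(W(s)-W(t))\ud W(s)=\tfrac12(\Delta W^2-h)$ by \eqref{DW2}, the first four terms of $\widetilde Y_{t,y}(t+h)$ reduce to $y+\Delta W g(y)+\tfrac12\Delta W^2\nabla g(y)g(y)+hf(y)$, which is exactly the leading part of \eqref{Yexpan1}. Therefore
\begin{align*}
Y_{t,y}(t+h)-\widetilde Y_{t,y}(t+h)=&\;R^1_Y-\Delta W\,h\,(F_1(y)+3F_2(y))-6F_2(y)\int_t^{t+h}\!\!\int_t^r(W(u)-W(t))\ud W(u)\ud W(r)\\
&\;-h^2\big[\alpha^\top(Ae)\nabla f(y)f(y)+F_3(y)+3F_4(y)\big],
\end{align*}
where $R^1_Y$ is the remainder in \eqref{Yexpan1}, for which $\|R^1_Y\|_{\mbf L^{2p}(\Omega)}\le K(1+|y|^\iota)h^{3/2}$ and $|\mbf E R^1_Y|\le K(1+|y|^\iota)h^2$ as established in the proof of Theorem~\ref{SRK1converge} (these bounds involve no truncation and so hold for all $p\ge1$). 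Both $\Delta W\,h$ and the iterated It\^o integral are of strong order $h^{3/2}$, and the bracketed coefficient lies in $\mbf F$; together with $F_i\in\mbf F$ this gives the strong one-step bound $\|Y_{t,y}(t+h)-\widetilde Y_{t,y}(t+h)\|_{\mbf L^{2p}(\Omega)}\le K(1+|y|^\iota)h^{3/2}$ for every $p\ge1$. For the mean, the two stochastic terms are mean-zero iterated It\^o integrals, so only $\mbf E R^1_Y$ and the deterministic $h^2$-term survive, whence $|\mbf E(Y_{t,y}(t+h)-\widetilde Y_{t,y}(t+h))|\le K(1+|y|^\iota)h^2$. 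This is (A1) with $p_2=\tfrac32$, $p_1=2$, and indeed $p_1\ge p_2+\tfrac12$.

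Conditions~(A2) and~(A3) are then short. Condition~(A2) follows from the moment bounds \eqref{Ybound} for $Y(t_k)$ and \eqref{Ytildenbound} for $\widetilde Y_k$, both of all orders. For (A3), I would reuse the decomposition from the proof of Lemma~\ref{YSRKn-Ytilden}: $\widetilde Y_{t,x}(t+h)-\widetilde Y_{t,y}(t+h)=x-y+V$ with $V=\Delta W(g(x)-g(y))+P(x)-P(y)$ and $|P(x)-P(y)|\le K|x-y|M''$, $\|M''\|_{\mbf L^q(\Omega)}\le K(q)h$; carrying the very same estimates in $\mbf L^{2p}(\Omega)$ rather than $\mbf L^2(\Omega)$ yields $\mbf E|\widetilde Y_{t,x}(t+h)-\widetilde Y_{t,y}(t+h)|^{2p}\le|x-y|^{2p}(1+Kh)$ and $\mbf E|V|^{2p}\le K|x-y|^{2p}h^{p}$, where the vanishing conditional mean of the martingale term $\Delta W(g(x)-g(y))$ keeps its contribution at order $h^{p}$. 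With (A1)--(A3) established, Theorem~\ref{fundamental} applied for each $p$ gives $\sup_{0\le k\le N}\|Y(t_k)-\widetilde Y_k\|_{\mbf L^{2p}(\Omega)}\le Kh$, which is the asserted estimate for every $p\ge1$.

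The step I expect to require the most care is the \emph{mean} one-step estimate in~(A1). Because $\widetilde Y$ agrees with the exact solution only through strong order $h$, the $\Delta W\,h$ and triple-integral discrepancies of strong order $h^{3/2}$ do not cancel (they reflect the fact that the SRK method need not be of order $3/2$), so the required weak rate $p_1=2$ is recovered \emph{only} through their mean-zero structure. The remaining work is bookkeeping: tracking the polynomial growth exponents so that every constant has the admissible form $K(1+|y|^\iota)$, and checking that no estimate secretly re-introduces the truncated increment $\DW$, which would otherwise reinstate a $\kappa$-dependent restriction and spoil the full range of $p$.
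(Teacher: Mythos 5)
Your proof is correct and is essentially the paper's own argument: both apply Theorem \ref{fundamental} directly to the pair $(\widetilde Y_n,\,Y(t_n))$, verify (A1) with $p_1=2$, $p_2=\tfrac{3}{2}$ by rewriting the exact one-step flow via \eqref{Yexpan1} and \eqref{DW2} so that its leading part matches that of \eqref{Ytilde} (the discrepancy being $R^1_Y$ plus mean-zero terms of strong order $h^{3/2}$ and a deterministic $h^2$ term), and obtain (A2) from \eqref{Ybound} and \eqref{Ytildenbound}. The one structural difference is the assignment of roles in Theorem \ref{fundamental}: the paper lets the exact solution play the role of $\widetilde X$, so condition (A3) concerns the exact flow $Y_{t,y}(t+h)$ and is available immediately from \cite[Lemma 2.2]{ZhangZQ13}; you instead let $\widetilde Y$ play that role, so (A3) must be checked for $\widetilde Y_{t,x}(t+h)$, forcing you to upgrade the $\mbf L^2$ stability estimates from the proof of Lemma \ref{YSRKn-Ytilden} to $\mbf L^{2p}$ for every $p\ge 1$. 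That upgrade does go through, but note where the mean-zero structure is actually needed: the bound $\mbf E|V|^{2p}\le K|x-y|^{2p}h^{p}$ uses only $\mbf E|\Delta W|^{2p}\le K(p)h^{p}$ and requires no cancellation, whereas the martingale property of $\Delta W(g(x)-g(y))$ (equivalently $|\mbf EV|\le K|x-y|h$) is what controls the cross term $\LL x-y,V\RR$ in the expansion of $\mbf E|x-y+V|^{2p}$ and produces the factor $(1+Kh)$ in the first inequality of (A3); your text attributes the cancellation to the former rather than the latter. This is a wording slip, not a gap, and the paper's role assignment simply spares this entire verification by citing an existing lemma for the exact flow.
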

\begin{proof}
It follows from \eqref{Yexpan1} and \eqref{DW2} that
\begin{align*}
	Y_{t,y}(t+h)=y+\Delta Wg(y)+\bar f(y)h+\nabla g(y)g(y)\int_t^{t+h}(W(s)-W(t))\ud W(s)+R^1_{Y}.
\end{align*}
Comparing $Y_{t,y}(t+h)$ and $\widetilde Y_{t,y}(t+h)$, one can prove $Y_{t,y}(t+h)-\widetilde Y_{t,y}(t+h)$ satisfies the condition (A1) of Theorem \ref{fundamental} for any $p\ge 1$ with $p_1=2$ and $p_2=\frac{3}{2}$. In addition, the condition (A2) of Theorem \ref{fundamental} holds due to \eqref{Ybound} and \eqref{Ytildenbound}.  Also, $Y_{t,y}(t+h)$ fulfills the condition (A3) of Theorem \ref{fundamental}; see \cite[Lemma 2.2]{ZhangZQ13}. Thus, the proof is complete based on Theorem \ref{fundamental}.
\end{proof} 

For convenience of statement, we introduce the following assumption which implies Assumption \ref{assum1}.
\begin{assum}\label{assum2}
	Assume that $f\in\mbf C^3(\mbb R^d)$, $g\in\mbf C^4(\mbb R^d)$, $\mcal D^3f,\,\mcal D^4g\in\mbf F$ and every function from $\mcal H$ is Lipschitz continuous. Moreover, assume that $\alpha^\top e=\beta^\top e=1$, $\beta^\top(Be)=\frac{1}{2}$ and $\kappa\ge 3$.
\end{assum} 
Under the above assumption, $\widetilde{Y}_N$ and $Y^{SRK}_N$ have the same asymptotic error distribution.
\begin{lem}\label{samedistri}
	Let Assumption \ref{assum2} hold. Then $N(Y^{SRK}_N-Y(T))$ and $N(\widetilde{Y}_N-Y(T))$ have the same limit distribution as $N\to\infty$, if one of them converges in distribution.
\end{lem}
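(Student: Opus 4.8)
The plan is to exhibit the difference $N(Y^{SRK}_N - Y(T)) - N(\widetilde Y_N - Y(T)) = N(Y^{SRK}_N - \widetilde Y_N)$ as a quantity converging to $0$ in probability, and then to invoke Slutzky's theorem exactly as anticipated in the discussion of Framework \ref{frame}. Concretely, I would first appeal to Lemma \ref{YSRKn-Ytilden}, which under Assumption \ref{assum2} gives the mean-square bound
\begin{align*}
	\sup_{n=0,\ldots,N}\big(\mbf E|Y^{SRK}_n - \widetilde Y_n|^2\big)^{1/2} \le K h^{3/2}.
\end{align*}
Specializing to $n = N$ and recalling that $h = T/N$, this yields
\begin{align*}
	\big(\mbf E\,\big|N(Y^{SRK}_N - \widetilde Y_N)\big|^2\big)^{1/2} \le N \cdot K (T/N)^{3/2} = K\,T^{3/2}\,N^{-1/2} \longrightarrow 0
\end{align*}
as $N \to \infty$. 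Hence $N(Y^{SRK}_N - \widetilde Y_N) \to 0$ in $\mbf L^2(\Omega)$, and in particular converges to $0$ in probability.

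Next I would write $N(Y^{SRK}_N - Y(T)) = N(\widetilde Y_N - Y(T)) + N(Y^{SRK}_N - \widetilde Y_N)$ and apply Slutzky's theorem (cf.\ \cite[Theorem 13.18]{Klenke}): if one of the two normalized errors converges in distribution to some limit, say $N(\widetilde Y_N - Y(T)) \overset{d}{\Longrightarrow} U(T)$, then adding the term $N(Y^{SRK}_N - \widetilde Y_N)$, which tends to $0$ in probability, does not change the limit, so $N(Y^{SRK}_N - Y(T)) \overset{d}{\Longrightarrow} U(T)$ as well. The argument is completely symmetric: subtracting the same negligible term shows that if $N(Y^{SRK}_N - Y(T))$ converges in distribution then so does $N(\widetilde Y_N - Y(T))$, to the same limit. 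This symmetry is precisely what the phrase ``if one of them converges in distribution'' requires, so both directions are covered.

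There is essentially no hard step here, since the real analytic content has already been carried out in Lemma \ref{YSRKn-Ytilden}; the only point deserving care is the bookkeeping of powers of $h$ versus $N$. The strong order $p+\tfrac12 = \tfrac32$ of the difference $Y^{SRK}_n - \widetilde Y_n$ is exactly what is needed so that multiplying by the normalizing factor $N = N^p$ (with $p=1$) leaves a residual that still decays like $N^{-1/2}$; this is the structural reason the appurtenant method was constructed to approximate the SRK method with one extra half-order. I would therefore keep the proof short, stating the $\mbf L^2$ estimate, noting that $\mbf L^2$ convergence implies convergence in probability, and closing with the Slutzky argument in both directions.
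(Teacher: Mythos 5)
Your proposal is correct and follows essentially the same route as the paper's proof: the same decomposition $N(Y^{SRK}_N-Y(T))=N(\widetilde{Y}_N-Y(T))+N(Y^{SRK}_N-\widetilde{Y}_N)$, the same appeal to Lemma \ref{YSRKn-Ytilden} to conclude that the difference tends to $0$ in probability, and the same application of Slutzky's theorem. The only difference is that you spell out the bookkeeping ($N\cdot K(T/N)^{3/2}=KT^{3/2}N^{-1/2}\to 0$) that the paper leaves implicit, which is a fine addition but not a different argument.
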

\begin{proof}
	Note that $N(Y^{SRK}_N-Y(T))=N(\widetilde{Y}_N-Y(T))+Q_N$ with $Q_N=N(Y^{SRK}_N-\widetilde{Y}_N)$. By Lemma \ref{YSRKn-Ytilden}, $Q_N$ converges to $0$ in probability. Thus, the proof is finished as a result of Slutzky’s theorem.
\end{proof}

\section{Asymptotic error distribution for multiplicative noise}\label{Sec4}
\subsection{Limit distribution of normalized error process}
In this section, we give the asymptotic error distribution of 
$Y_N^{SRK}$, which is that of $\widetilde Y_N$ due to Lemma \ref{samedistri}. For this end, we introduce the continuous version of $\{\widetilde Y_n\}_{n=0}^N$:
\begin{align}\label{Ytildet1}
	&\;\widetilde{Y}(t)= Y_0+\int_{0}^t\bar f(\widetilde{Y}(\kappa_N(s)))\ud s+\int_{0}^tg(\widetilde{Y}(\kappa_N(s)))\ud W(s)\nonumber\\
	&\;+\int_{0}^t\int_{\kappa_N(s)}^s\nabla g(\widetilde{Y}(\kappa_N(r)))g(\widetilde{Y}(\kappa_N(r)))\ud W(r)\ud W(s)+h\int_{0}^t\big[F_1(\widetilde{Y}(\kappa_N(s)))+3F_2(\widetilde{Y}(\kappa_N(s)))\big]\ud W(s) \nonumber\\
	&\;+6\int_{0}^t\int_{\kappa_N(s)}^s\int_{\kappa_N(s)}^rF_2(\widetilde{Y}(\kappa_N(u)))\ud W(u)\ud W(r)\ud W(s)+h\int_{0}^t\big[\alpha^\top(Ae)\nabla f(\widetilde{Y}(\kappa_N(s)))f(\widetilde{Y}(\kappa_N(s)))\nonumber\\
	&\;+F_3(\widetilde{Y}(\kappa_N(s)))+3F_4(\widetilde{Y}(\kappa_N(s)))\big]\ud s,
\end{align}
where $\kappa_N(s)=\lfloor \frac{s}{h}\rfloor h=\lfloor \frac{Ns}{T}\rfloor \frac{T}{N}$.
Then one has $\widetilde{Y}(t_n)=\widetilde{Y}_n$, $n=0,\ldots,N$ from \eqref{Ytilden} and \eqref{Ytildet1}. It is not difficult to show that $\widetilde{Y}$ converges to $Y$ with strong order $1$.

\begin{lem}\label{Yt-Ytildet}
	Let Assumption \ref{assum2} hold. Then for any $p\ge 1$, 
	\begin{align*}
		\sup_{t\in[0,T]}\|Y(t)-\widetilde{Y}(t)\|_{\mbf L^p(\Omega)}\le K(p,T)h.
	\end{align*}
\end{lem}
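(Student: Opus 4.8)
The plan is to prove Lemma \ref{Yt-Ytildet} by combining the one-step estimates already established with a discrete-to-continuous comparison, and then invoking a Gronwall-type argument. Since $\widetilde Y(t_n)=\widetilde Y_n$ and Lemma \ref{Y-Ytilde} already gives $\sup_n\|\widetilde Y_n-Y(t_n)\|_{\mbf L^p(\Omega)}\le K(p)h$ at the grid points, the only genuinely new content is controlling the error on each subinterval $(t_n,t_{n+1})$ where $\widetilde Y(t)$ differs from its left endpoint $\widetilde Y_n$. First I would fix $t\in(t_n,t_{n+1}]$ and write $Y(t)-\widetilde Y(t)=[Y(t)-Y(t_n)]+[Y(t_n)-\widetilde Y_n]+[\widetilde Y_n-\widetilde Y(t)]$, estimating the three brackets separately in $\mbf L^p(\Omega)$.

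The middle bracket is bounded by $K(p)h$ directly from Lemma \ref{Y-Ytilde}, and the first bracket is bounded by $K(p,T)(t-t_n)^{1/2}\le K(p,T)h^{1/2}$ by the H\"older-continuity estimate \eqref{Yt-Ys}. The third bracket, $\widetilde Y(t)-\widetilde Y_n$, I would estimate by inspecting the integral representation \eqref{Ytildet1} restricted to $[t_n,t]$: each term is an integral over an interval of length at most $h$, so the drift-type (Lebesgue) integrals contribute $O(h)$ and the martingale (stochastic) integrals contribute $O(h^{1/2})$ in $\mbf L^p(\Omega)$ via the BDG inequality, using the moment bound \eqref{Ytildenbound} on $\widetilde Y_n$ together with the at-most-linear growth of $\bar f,\nabla g\,g,\nabla f\,f$ and the $F_i$ (which are Lipschitz under Assumption \ref{assum2}). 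Thus each of the first and third brackets is only $O(h^{1/2})$ pointwise in $t$, which by itself is too weak; the resolution is that the leading $O(h^{1/2})$ contributions come from stochastic integrals whose integrands are frozen at $\widetilde Y_n$ and $Y(t_n)$ respectively, and these must be shown to nearly cancel rather than bounded crudely.

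This near-cancellation is where I expect the main obstacle to lie. The honest route is not to estimate $Y(t)-Y(t_n)$ and $\widetilde Y(t)-\widetilde Y_n$ in isolation, but to compare the two continuous dynamics directly: subtract the integral representation of $\widetilde Y(t)$ from the It\^o form of $Y(t)$ and form the error process $e(t):=Y(t)-\widetilde Y(t)$. The dominant terms $\int_0^t g(Y(s))\,\ud W(s)$ and $\int_0^t g(\widetilde Y(\kappa_N(s)))\,\ud W(s)$ then appear under a common stochastic integral, so their difference is governed by $g(Y(s))-g(\widetilde Y(\kappa_N(s)))$, which splits into a Lipschitz term $K|e(s)|$ plus the increment $g(Y(s))-g(Y(\kappa_N(s)))$ handled by \eqref{Yt-Ys}; the same pattern applies to the drift. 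Applying the BDG and H\"older inequalities to $\sup_{s\le t}\|e(s)\|_{\mbf L^p(\Omega)}$, all the frozen-coefficient discretization errors accumulate to $O(h)$ while the remaining terms are controlled by $\int_0^t\|e(s)\|_{\mbf L^p(\Omega)}^p\,\ud s$.

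I would close the argument with Gronwall's inequality applied to $\phi(t):=\sup_{s\le t}\mbf E|e(s)|^p$, which yields $\phi(T)\le K(p,T)h^p$ and hence $\sup_{t\in[0,T]}\|Y(t)-\widetilde Y(t)\|_{\mbf L^p(\Omega)}\le K(p,T)h$, as claimed. The technical heart is verifying that the multiple-integral terms in \eqref{Ytildet1} (the iterated double and triple stochastic integrals with $F_2$, and the $\nabla g\,g$ L\'evy-area term) reproduce, up to $O(h)$ remainders, exactly the corresponding higher-order terms in the It\^o--Taylor expansion \eqref{Yexpan1} of $Y$; this is essentially a bookkeeping verification that \eqref{Ytildet1} was constructed to be an order-one scheme, and it is made routine by the already-proven one-step error bound in Lemma \ref{Y-Ytilde} transported from the grid to arbitrary $t$ via the subinterval estimates above.
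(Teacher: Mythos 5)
Your pivot from the subinterval decomposition to a \emph{global} comparison over $[0,t]$ opens a gap that the proposal never closes. Once $e(t)=Y(t)-\widetilde Y(t)$ is written with integrals over all of $[0,t]$, the frozen-coefficient increments are integrated over an interval of length $O(1)$, not $O(h)$, so the tools you invoke give only the rate you are trying to beat: by the BDG inequality and \eqref{Yt-Ys},
\begin{align*}
\Big\|\int_0^t\big[g(Y(s))-g(Y(\kappa_N(s)))\big]\ud W(s)\Big\|_{\mbf L^p(\Omega)}\le K\Big(\int_0^t\big\|g(Y(s))-g(Y(\kappa_N(s)))\big\|_{\mbf L^p(\Omega)}^2\ud s\Big)^{1/2}\le K(Th)^{1/2},
\end{align*}
and the analogous Lebesgue-integral increment is likewise only $O(h^{1/2})$. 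Hence the claim that ``all the frozen-coefficient discretization errors accumulate to $O(h)$'' is unjustified as stated. Along your route, recovering $O(h)$ forces exactly the work you defer as ``bookkeeping'': one must It\^o-expand $g(Y(s))-g(Y(\kappa_N(s)))$ so that its L\'evy-area part cancels, up to $O(h)$, against the double stochastic integral in \eqref{Ytildet1} (using that $(\nabla g)g$ is Lipschitz), and one must treat the time-integral increments by a martingale-orthogonality argument of the type the paper uses for $\tilde R_2^N$ in the proof of Lemma \ref{UNexpression}. Moreover, your stated reason this bookkeeping is routine---``transporting'' Lemma \ref{Y-Ytilde} from the grid to arbitrary $t$ via the subinterval estimates---is circular: those subinterval estimates are precisely the $O(h^{1/2})$ bounds you had just discarded as too weak, and the transported statement \emph{is} the lemma being proved.

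The paper's proof shows that none of this machinery is needed, and it is obtained from your opening decomposition by one change: compare the two dynamics under common integrals, but only over $[t_n,t]$ with $t\in[t_n,t_{n+1}]$, keeping the grid error as a separate term. By \eqref{Ytildet1},
\begin{align*}
\widetilde Y(t)-Y(t)=\big[\widetilde Y_n-Y(t_n)\big]+\int_{t_n}^t\big[\bar f(\widetilde Y(\kappa_N(s)))-\bar f(Y(s))\big]\ud s+\int_{t_n}^t\big[g(\widetilde Y(\kappa_N(s)))-g(Y(s))\big]\ud W(s)+P^N(t),
\end{align*}
where $P^N(t)$ collects all the higher-order terms of \eqref{Ytildet1} restricted to $[t_n,t]$; each of these runs over an interval of length at most $h$, so $\sup_{t\in[t_n,t_{n+1}]}\|P^N(t)\|_{\mbf L^p(\Omega)}\le K(p)h$ by crude BDG/H\"older bounds, with no cancellation. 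The first bracket is $O(h)$ by Lemma \ref{Y-Ytilde}, and the integrand differences are $O(h^{1/2})$ by \eqref{Yt-Ys} and Lemma \ref{Y-Ytilde}. The point you missed is that the short integration interval upgrades these weak bounds: Minkowski gives $O(h^{1/2})\cdot h=O(h^{3/2})$ for the drift term, and BDG gives $\big(\int_{t_n}^tKh\,\ud s\big)^{1/2}=O(h)$ for the martingale term. So the ``crude'' estimates you rejected do suffice; Lemma \ref{Y-Ytilde} carries the global part of the error, and no Gronwall argument is required.
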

\begin{proof}
	By \eqref{Ytildet1}, we have that for any $t\in[t_n,t_{n+1}]$,
	\begin{align*}
		\widetilde{Y}(t)=\widetilde{Y}_n+\int_{t_n}^t\bar f(\widetilde{Y}(\kappa_N(s)))\ud s+\int_{t_n}^tg(\widetilde{Y}(\kappa_N(s)))\ud W(s)+P^N(t)
	\end{align*}
with  $\sup_{t\in[t_n,t_{n+1}]}\|P^N(t)\|_{\mbf L^p(\Omega)}\le K(p)h$ for any $p\ge1$. Further, it holds that
\begin{align*}
	\widetilde{Y}(t)-Y(t)=&\;\widetilde Y_n-Y(t_n)+\int_{t_n}^t \big[\bar f(\widetilde{Y}(\kappa_N(s)))-\bar f(Y(s))\big]\ud s\\
	&\;+\int_{t_n}^t \big[g(\widetilde{Y}(\kappa_N(s)))-g(Y(s))\big]\ud W(s)+P^N(t),~t\in[t_n,t_{n+1}].
\end{align*}
For any Lipschitz function $G$, we have $\|G(Y(s))-G(\widetilde{Y}(\kappa_N(s)))\|_{\mbf L^p(\Omega)}\le K(p)h^{1/2}$ for $p\ge 1$ due to \eqref{Yt-Ys} and  Lemma \ref{Y-Ytilde}. Thus, the proof is complete by the Minkowski inequality and BDG inequality.
\end{proof}

We are in the position to give an expansion for the normalized error $N(\widetilde{Y}(t)-Y(t))$.
\begin{lem}\label{UNexpression}
Denote $U^N(t):=N(\widetilde{Y}(t)-Y(t))$, $t\in[0,T]$. Let Assumption \ref{assum2} hold. Then $U^N$ has the following representation
\begin{align}
&\;U^N(t)=\int_0^t\nabla \bar f(Y(s))U^N(s)\ud s+\int_0^t\nabla g(Y(s))U^N(s)\ud W(s)+\sum_{i=1}^{4}I_i^N(t)+R_{U^N}(t).	\label{UNtsimple}
\end{align}	
Here,
\begin{align*}
&\; I_1^N(t)=T\int_0^t\big[\alpha^\top(Ae)\nabla f(\widetilde{Y}(\kappa_N(s)))f(\widetilde{Y}(\kappa_N(s)))+F_3(\widetilde{Y}(\kappa_N(s)))+3F_4(\widetilde{Y}(\kappa_N(s)))\big]\ud s,\\
&\;I_2^N(t)=T\int_0^t\big[F_1(\widetilde{Y}(\kappa_N(s)))+3F_2(\widetilde{Y}(\kappa_N(s)))-\nabla\bar f(\widetilde{Y}(\kappa_N(s)))g(\widetilde{Y}(\kappa_N(s)))\big]\ud W(s),\\
&\;I_3^N(t)=T\int_0^t\big[\nabla \bar f(\widetilde{Y}(\kappa_N(s)))\bar f(\widetilde{Y}(\kappa_N(s)))+\frac{1}{2}\mcal D^2\bar f(\widetilde{Y}(\kappa_N(s)))(g(\widetilde{Y}(\kappa_N(s))),g(\widetilde{Y}(\kappa_N(s))))\big]\\
&\;\big(\lfloor\frac{Ns}{T}\rfloor-\frac{Ns}{T}\big)\ud s,\\
&\;I_4^N(t)=T\int_0^tG_1(\widetilde{Y}(\kappa_N(s)))(\frac{Ns}{T}-\lfloor\frac{Ns}{T}\rfloor)\ud W(s)\\
&\;\qquad\qquad+N\int_0^tG_2(\widetilde{Y}(\kappa_N(s)))\int_{\kappa_N(s)}^s\int_{\kappa_N(s)}^r\ud W(u)\ud W(r)\ud W(s),
\end{align*}
where $G_1(y):=\nabla \bar f(y)g(y)-\nabla g(y)\bar f(y)-\frac{1}{2}\mcal D^2g(y)(g(y),g(y))$, $G_2(y)=6F_2(y)-(\nabla g(y))^2g(y)-\mcal D^2g(y)(g(y),g(y))$, $y\in\mbb R^d$ and $R_{U^N}(t)$ is the remainder with $\sup_{t\in[0,T]}\mbf E|R_{U^N}(t)|^2\le Kh$.
\end{lem}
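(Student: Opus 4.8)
The plan is to derive the representation by subtracting the exact dynamics from the continuous numerical dynamics, multiplying by $N$, and carefully matching each term against its ``leading order'' in the target expansion, pushing everything else into the remainder. First I would write both $\widetilde Y(t)$ and $Y(t)$ as integrals. For $\widetilde Y(t)$ I would use the continuous version \eqref{Ytildet1}. For $Y(t)$ I would use the It\^o form of \eqref{mulSDE2}, namely $Y(t)=Y_0+\int_0^t\bar f(Y(s))\,\ud s+\int_0^tg(Y(s))\,\ud W(s)$. Taking the difference and multiplying by $N$ produces a drift integral $N\int_0^t[\bar f(\widetilde Y(\kappa_N(s)))-\bar f(Y(s))]\,\ud s$ and a diffusion integral $N\int_0^t[g(\widetilde Y(\kappa_N(s)))-g(Y(s))]\,\ud W(s)$, plus $N$ times the extra higher-order terms appearing in \eqref{Ytildet1}.

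The central manipulation is a two-step splitting of each of these increments. I would first split $G(\widetilde Y(\kappa_N(s)))-G(Y(s))$ (for $G=\bar f$ and $G=g$) into $[G(\widetilde Y(\kappa_N(s)))-G(Y(\kappa_N(s)))]+[G(Y(\kappa_N(s)))-G(Y(s))]$. The first bracket, via a first-order Taylor expansion $G(\widetilde Y(\kappa_N(s)))-G(Y(\kappa_N(s)))=\nabla G(Y(\kappa_N(s)))(\widetilde Y(\kappa_N(s))-Y(\kappa_N(s)))+\text{(quadratic)}$, contributes the linear-in-$U^N$ terms $\int_0^t\nabla\bar f(Y(s))U^N(s)\,\ud s$ and $\int_0^t\nabla g(Y(s))U^N(s)\,\ud W(s)$ after replacing $Y(\kappa_N(s))$ by $Y(s)$ and $U^N(\kappa_N(s))$ by $U^N(s)$ at the cost of a small remainder (using Lemma \ref{Yt-Ytildet} for the quadratic term and \eqref{Yt-Ys} for the grid-point substitution). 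The second bracket $G(Y(\kappa_N(s)))-G(Y(s))$ is handled by applying the It\^o formula to $G\circ Y$ between $\kappa_N(s)$ and $s$; its leading contribution after multiplying by $N$ is exactly what produces $I_3^N$ (the drift part, carrying the factor $\lfloor Ns/T\rfloor-Ns/T$ from integrating $\ud r$ over $[\kappa_N(s),s]$) and the first summand of $I_4^N$ together with part of $I_2^N$. Meanwhile $N$ times the iterated-integral and $\Delta W^i$-type terms in \eqref{Ytildet1} reproduce the second summand of $I_4^N$ and the remaining structure, after rewriting $6F_2$ against $(\nabla g)^2g+\mcal D^2g(g,g)$ so that only $G_2$ survives; the $h$-weighted drift and diffusion terms in \eqref{Ytildet1}, multiplied by $N=T/h$, become the $T$-weighted integrals defining $I_1^N$ and the $F_1+3F_2$ part of $I_2^N$.

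The bookkeeping that defines $G_1$ and $G_2$ deserves care: $G_1$ collects precisely the $O(h)$ diffusion-coefficient discrepancy between the Milstein-type scheme and the It\^o expansion of $g\circ Y$ (the terms $\nabla\bar f\,g$, $\nabla g\,\bar f$, and $\tfrac12\mcal D^2g(g,g)$ arising from the It\^o correction of $g(Y(\kappa_N(s)))-g(Y(s))$), while $G_2$ isolates the mismatch in the triple stochastic integral. I would verify that the algebraic combination leaves exactly the stated $G_1,G_2$ and that every discarded piece is genuinely higher order. The main obstacle will be this last point: controlling $R_{U^N}$ in $\mbf L^2$ uniformly in $t$. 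The delicate remainders are the quadratic Taylor remainder of $\bar f,g$ (order $h^2$ after the $N$ factor, using $\|\widetilde Y(\kappa_N(s))-Y(\kappa_N(s))\|_{\mbf L^4}=O(h)$ from Lemma \ref{Yt-Ytildet}), the replacement of $\kappa_N(s)$-evaluated integrands by $s$-evaluated ones, and the second-order It\^o remainders from expanding $G\circ Y$; I would bound these by the BDG and H\"older inequalities together with the moment bounds \eqref{Ybound}, \eqref{Ytildenbound}, checking that multiplication by $N=T/h$ still leaves $\sup_{t\in[0,T]}\mbf E|R_{U^N}(t)|^2\le Kh$. The truncation gap between $\DW$ and $\Delta W$ plays no role here because \eqref{Ytildet1} already uses the genuine Brownian increments; that difficulty was absorbed earlier via Lemma \ref{YSRKn-Ytilden}.
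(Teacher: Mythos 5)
Your overall strategy is sound, and it takes a genuinely different route from the paper's. Both proofs start by subtracting the exact dynamics from $N\cdot$\eqref{Ytildet1}, but they insert different intermediate points: you split $\bar f(\widetilde Y(\kappa_N(s)))-\bar f(Y(s))$ through $Y(\kappa_N(s))$, so the discretization effect sits on the exact solution and is expanded by the It\^o formula, whereas the paper splits through $\widetilde Y(s)$, so that the linear term $\int_0^t\nabla \bar f(Y(s))U^N(s)\,\ud s$ appears immediately (both entries of the first bracket are evaluated at the same time $s$) and the discretization effect sits on the increment $\widetilde Y(s)-\widetilde Y(\kappa_N(s))$ of the numerical solution, expanded by deterministic Taylor plus the scheme's own representation \eqref{sec4eq2}. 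Your bookkeeping for $I_1^N,\dots,I_4^N$, $G_1$ and $G_2$ is consistent with what either route produces: the sawtooth factors come from $\int_{\kappa_N(s)}^s\ud r=\frac{T}{N}\big(\frac{Ns}{T}-\lfloor\frac{Ns}{T}\rfloor\big)$ and from the stochastic Fubini step, and $G_2$ is exactly the triple-integral mismatch. One caution: several of the discarded pieces, e.g. $N\int_0^t(\cdots)\int_{\kappa_N(s)}^s\int_{\kappa_N(s)}^r\ud W(u)\ud W(r)\,\ud s$, are \emph{not} small by BDG/H\"older applied pathwise; they are $O(1)\cdot\ud s$-integrands and become $O(h^{1/2})$ only through the martingale-orthogonality cancellation across subintervals that the paper uses in \eqref{sec3eq4}. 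Make sure that argument, not just BDG, is what you invoke.

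The one genuine gap is the grid-point substitution $U^N(\kappa_N(s))\to U^N(s)$, which your splitting forces and the paper's avoids. You justify it by \eqref{Yt-Ys} (together with Lemma \ref{Yt-Ytildet}), but these cannot do it: \eqref{Yt-Ys} concerns $Y$ only, and Lemma \ref{Yt-Ytildet} merely gives $\|U^N(s)\|_{\mbf L^2(\Omega)}+\|U^N(\kappa_N(s))\|_{\mbf L^2(\Omega)}\le K$, hence only an $O(1)$ bound on the difference. What you need is the increment estimate $\sup_{s\in[0,T]}\|U^N(s)-U^N(\kappa_N(s))\|_{\mbf L^2(\Omega)}\le Kh^{1/2}$. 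This is true but requires its own argument: write $U^N(s)-U^N(\kappa_N(s))=N\big[(\widetilde Y(s)-\widetilde Y(\kappa_N(s)))-(Y(s)-Y(\kappa_N(s)))\big]$ and compare the one-step It\^o--Taylor expansions of the two processes. The $O(h^{1/2})$ and $O(h)$ terms, including the L\'evy-area/Milstein term which \eqref{Ytildet1} contains precisely for this purpose, match up to coefficients evaluated at $\widetilde Y(\kappa_N(s))$ versus $Y(\kappa_N(s))$; since these coefficients differ by $O(h)$ in $\mbf L^p$ (Lemma \ref{Y-Ytilde}), the increment difference is $O(h^{3/2})$, and multiplying by $N$ gives the $O(h^{1/2})$ bound. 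Without this auxiliary estimate your substitution step is unjustified; with it, your proof goes through and delivers the same remainder bound $\sup_{t\in[0,T]}\mbf E|R_{U^N}(t)|^2\le Kh$.
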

\begin{proof}
	Subtracting $Y(t)$ from $\widetilde{Y}(t)$, we have
	\begin{align}
	U^N(t)=&\;N\int_0^t\big[\bar f(\widetilde Y(\kappa_N(s)))-\bar f(Y(s))\big]\ud s+N\int_0^t[g(\widetilde Y(\kappa_N(s)))-g(Y(s))\big]\ud W(s) \nonumber\\
	&\;+N\int_0^t\int_{\kappa_N(s)}^s\nabla g(\widetilde Y(\kappa_N(r)))g(\widetilde Y(\kappa_N(r)))\ud W(r)\ud W(s)+T\int_0^t\big[F_1(\widetilde Y(\kappa_N(s)))\nonumber\\
	&\;+3F_2(\widetilde Y(\kappa_N(s)))\big]\ud W(s)+6N\int_0^t\int_{\kappa_N(s)}^s\int_{\kappa_N(s)}^rF_2(\widetilde Y(\kappa_N(u)))\ud W(u)\ud W(r)\ud W(s)\nonumber\\
	&\;+T\int_0^t\big[\alpha^\top(Ae)\nabla f(\widetilde Y(\kappa_N(s)))f(\widetilde Y(\kappa_N(s)))+F_3(\widetilde Y(\kappa_N(s)))+3F_4\widetilde Y(\kappa_N(s))\big]\ud s. \label{UNt}
	\end{align}
	
The Taylor formula yields
\begin{align}
	&\;N\int_0^t\big[\bar f(\widetilde Y(\kappa_N(s)))-\bar f(Y(s))\big]\ud s\nonumber\\
	=&\;N\int_0^t\big[\bar f(\widetilde Y(s))-\bar f(Y(s))\big]\ud s-N\int_0^t\big[\bar f(\widetilde{Y}(s))-\bar f(\widetilde Y(\kappa_N(s)))\big]\ud s\nonumber\\
	=&\; N\int_0^t\nabla \bar f(Y(s))(\widetilde{Y}(s)-Y(s))\ud s+A_1^N(t)-N\int_0^t\nabla\bar f(\widetilde Y(\kappa_N(s)))(\widetilde{Y}(s)-\widetilde Y(\kappa_N(s)))\ud s\nonumber\\
	&\;-\frac{N}{2}\int_0^t\mcal D^2\bar f(\widetilde Y(\kappa_N(s)))(\widetilde{Y}(s)-\widetilde Y(\kappa_N(s)),\widetilde{Y}(s)-\widetilde Y(\kappa_N(s)))\ud s+A_2^N(t),\label{sec4eq1}
\end{align}
where 
\begin{align*}
	A_1^N(t)=&\;N\int_0^t\int_0^1(1-\lambda)\mcal D^2\bar f(Y(s)+\lambda(\widetilde Y(s)-Y(s)))(\widetilde Y(s)-Y(s),\widetilde Y(s)-Y(s))\ud\lambda\ud s, \\
	A_2^N(t)=&\;-\frac{N}{2}\int_0^t\int_0^1(1-\lambda)^2\mcal D^3\bar f(\widetilde Y(\kappa_N(s))+\lambda(\widetilde Y(s)-\widetilde Y(\kappa_N(s))))
	\big(\widetilde Y(s)-\widetilde Y(\kappa_N(s)),\\
	&\;\widetilde Y(s)-\widetilde Y(\kappa_N(s)),\widetilde Y(s)-\widetilde Y(\kappa_N(s))\big)\ud\lambda\ud s.
\end{align*}
Based on a standard argument, we obtain
\begin{align}
	\sup_{t\in[0,T]}\|\widetilde{Y}(t)\|_{\mbf L^p(\Omega)}\le K(p,T),~\|\widetilde Y(t)-\widetilde Y(s)\|_{\mbf L^p(\Omega)}\le K(p,T)|t-s|^{1/2},~t,s\in[0,T], \label{Ytilderegularity}
\end{align}	
which combined with Lemma \ref{Yt-Ytildet} and \eqref{Ybound} gives $\|A_1^N(t)\|_{\mbf L^p(\Omega)}\le K(p,T)h$ and $\|A_2^N(t)\|_{\mbf L^p(\Omega)}\le K(p,T)h^{1/2}$.
By \eqref{Ytildet1}, we can write 
\begin{align}
\widetilde{Y}(s)-\widetilde{Y}(\kappa_N(s))=&\;\int_{\kappa_N(s)}^sg(\widetilde Y(\kappa_N(r)))\ud W(r)+\int_{\kappa_N(s)}^s\bar f(\widetilde Y(\kappa_N(r)))\ud r\notag\\
&\;+\int_{\kappa_N(s)}^s\int_{\kappa_N(s)}^r\nabla g(\widetilde Y(\kappa_N(u)))g(\widetilde Y(\kappa_N(u)))\ud W(u)\ud W(r)+R^1_{\widetilde{Y}}(s), \label{sec4eq2}
\end{align}
with $\|R^1_{\widetilde{Y}}(s)\|_{\mbf L^p(\Omega)}\le K(p,T)h^{3/2}$. Thus,
 by \eqref{DW2} one has
\begin{align}
&\;\frac{N}{2}\int_0^t\mcal D^2\bar f(\widetilde Y(\kappa_N(s)))(\widetilde{Y}(s)-\widetilde Y(\kappa_N(s)),\widetilde{Y}(s)-\widetilde Y(\kappa_N(s)))\ud s\notag\\
&\;=\frac{N}{2}\int_0^t\mcal D^2\bar f(\widetilde Y(\kappa_N(s)))(g(\widetilde{Y}(\kappa_N(s))),g(\widetilde{Y}(\kappa_N(s))))(W(s)-W(\kappa_N(s)))^2\ud s+A_3^N(t)\notag\\
&\;=\frac{N}{2}\int_0^t\mcal D^2\bar f(\widetilde Y(\kappa_N(s)))(g(\widetilde{Y}(\kappa_N(s))),g(\widetilde{Y}(\kappa_N(s))))(s-\kappa_N(s))\ud s+\tilde R_1^N(t)+A_3^N(t), \label{sec4eq3}
\end{align}
with $\tilde R_1^N(t):=N\int_0^t\mcal D^2\bar f(\widetilde Y(\kappa_N(s)))(g(\widetilde{Y}(\kappa_N(s))),g(\widetilde{Y}(\kappa_N(s))))\int_{\kappa_N(s)}^s\int_{\kappa_N(s)}^r\ud W(u)\ud W(r)\ud s$ and $\|A_3^N(t)\|_{\mbf L^p(\Omega)}\le K(p,T)h^{1/2}$, $p\ge 1$.
Plugging \eqref{sec4eq2} into the third term of \eqref{sec4eq1} and using \eqref{sec4eq3}, we get
\begin{align}
		&\;N\int_0^t\big[\bar f(\widetilde Y(\kappa_N(s)))-\bar f(Y(s))\big]\ud s\nonumber\\
		=&\;\int_0^t \nabla \bar f(Y(s))U^N(s)\ud s-N\int_0^t\nabla \bar f(\widetilde Y(\kappa_N(s)))\int_{\kappa_N(s)}^sg(\widetilde Y(\kappa_N(r)))\ud W(r)\ud s \nonumber\\
		&\;-N\int_0^t\nabla \bar f(\widetilde Y(\kappa_N(s)))\bar f(\widetilde Y(\kappa_N(s)))(s-\kappa_N(s))\ud s \notag\\
		&\;-\frac{N}{2}\int_0^t\mcal D^2\bar f(\widetilde Y(\kappa_N(s)))(g(\widetilde{Y}(\kappa_N(s))),g(\widetilde{Y}(\kappa_N(s))))(s-\kappa_N(s))\ud s+\tilde{R}_3^N(t),\label{fbarminus}
\end{align}
where $\tilde{R}_3^N(t)=-\tilde{R}_1^N(t)-\tilde{R}_2^N(t)+A_1^N(t)+A_2^N(t)-A_3^N(t)-N\int_0^t\nabla \bar f(\widetilde Y(\kappa_N(s)))R^1_{\widetilde{Y}}(s)\ud s$ and $\tilde{R}_2^N(t):=N\int_0^t\nabla \bar f(\widetilde Y(\kappa_N(s)))\nabla g(\widetilde Y(\kappa_N(s)))g(\widetilde Y(\kappa_N(s)))\int_{\kappa_N(s)}^s\int_{\kappa_N(s)}^r\ud W(u)\ud W(r)\ud s$.

We write $\tilde R_2^N(t)$ as $\tilde R_2^N(t)=N\sum\limits_{i=0}^{\lfloor\frac{t}{h}\rfloor}M_i(t)$ with $$M_i(t)=\int_{t_i}^{t_{i+1}\wedge t}\nabla \bar f(\widetilde{Y}(t_i))\nabla g(\widetilde{Y}(t_i))g(\widetilde{Y}(t_i))\int_{t_i}^{s}\int_{t_i}^r\ud W(u)\ud W(r)\ud s.$$
By the independence of increments of $W$ and the properties of the condition expectation, for any $j>i$, $\mbf E\LL M_i(t),M_j(t)\RR=\mbf E\LL M_i(t),\mbf E(M_j(t)|\mcal F_{t_j})\RR=0$. Accordingly,
\begin{align}\label{sec3eq4}
	\mbf E|\tilde R_2^N(t)|^2=N^2\mbf E\sum_{i=0}^{\lfloor\frac{t}{h}\rfloor} |M_i(t)|^2\le KN^3h^4\le Kh\to 0,~\text{as}~N\to\infty.
\end{align}
Similarly, one has $\mbf E|\tilde R_1^N(t)|^2\le Kh$. Based on previous estimates, we obtain $\mbf E|\tilde R_3^N(t)|^2\le Kh$. By the stochastic Fubini theorem,
\begin{align*}
&\;N\int_0^t\nabla \bar f(\widetilde Y(\kappa_N(s)))\int_{\kappa_N(s)}^sg(\widetilde Y(\kappa_N(r)))\ud W(r)\ud s \\
=&\;N\int_0^t\int_{r}^{(\kappa_N(r)+T/N)\wedge t} \nabla\bar f(\widetilde Y(\kappa_N(s)))g(\widetilde Y(\kappa_N(s)))\ud s\ud W(r) \\
=&\;N\int_0^t\nabla\bar f(\widetilde Y(\kappa_N(r)))g(\widetilde Y(\kappa_N(r)))(\kappa_N(r)+\frac{T}{N}-r)\ud W(r)+\tilde R_4^N(t),
\end{align*}
where $\tilde R_4^N(t):=N\int_0^t\nabla\bar f(\widetilde Y(\kappa_N(r)))g(\widetilde Y(\kappa_N(r)))\big((\kappa_N(r)+\frac{T}{N})\wedge t-(\kappa_N(r)+\frac{T}{N})\big)\ud W(r)$.
Clearly, $\tilde R_4^N(t):=N\int_{\kappa_N(t)}^t\nabla\bar f(\widetilde Y(\kappa_N(t)))g(\widetilde Y(\kappa_N(t)))\big(t-(\kappa_N(t)+\frac{T}{N})\big)\ud W(r)$ due to the definition of $\kappa_N$. Thus, the It\^o isometry formula yields $\mbf E|\tilde R_4^N(t)|^2\le Kh$. Thus, \eqref{fbarminus} becomes
\begin{align}
		&\;N\int_0^t\big[\bar f(\widetilde Y(\kappa_N(s)))-\bar f(Y(s))\big]\ud s\nonumber\\
	=&\;\int_0^t \nabla \bar f(Y(s))U^N(s)\ud s-T\int_0^t\nabla \bar f(\widetilde Y(\kappa_N(s)))g(\widetilde Y(\kappa_N(s)))\big(\lfloor \frac{Ns}{T}\rfloor-\frac{Ns}{T}+1\big)\ud W(s)\nonumber\\
	&\;-T\int_0^t\big[\nabla \bar f(\widetilde Y(\kappa_N(s)))\bar f(\widetilde Y(\kappa_N(s)))+\frac{1}{2}\mcal D^2\bar f(\widetilde Y(\kappa_N(s)))(g(\widetilde{Y}(\kappa_N(s))),g(\widetilde{Y}(\kappa_N(s))))\big]\nonumber\\
	&\;\big(\frac{Ns}{T}-\lfloor \frac{Ns}{T}\rfloor\big)\ud s +\tilde{R}_3^N(t)-\tilde{R}_4^N(t). \label{fbarminus2}
\end{align}

Using the Taylor formula, $N\int_0^t\big[g(\widetilde{Y}(\kappa_N(s)))-g(Y(s))\big]\ud W(s)$ has a similar expansion as  \eqref{sec4eq1}, where $\bar f$ and $\ud s$ are replaced by $g$ and $\ud W(s)$, respectively. Further, using \eqref{sec4eq2} and \eqref{DW2}, we obtain
\begin{align}
&\;N\int_0^t\big[g(\widetilde{Y}(\kappa_N(s)))-g(Y(s))\big]\ud W(s) \nonumber\\
=&\; \int_0^t\nabla g({Y}(s))U^N(s)\ud W(s)-N\int_0^t\nabla g(\widetilde{Y}(\kappa_N(s)))g(\widetilde{Y}(\kappa_N(s)))\int_{\kappa_N(s)}^s\ud W(r)\ud W(s) \nonumber\\
&\;-N\int_0^t\Big[(\nabla g(\widetilde{Y}(\kappa_N(s))))^2g(\widetilde{Y}(\kappa_N(s)))+\mcal D^2g(\widetilde{Y}(\kappa_N(s)))(g(\widetilde{Y}(\kappa_N(s))),g(\widetilde{Y}(\kappa_N(s))))\Big] \nonumber\\
&\;\int_{\kappa_N(s)}^s\int_{\kappa_N(s)}^r\ud W(u)\ud W(r)\ud W(s)-T\int_0^t\big[\nabla g(\widetilde{Y}(\kappa_N(s)))\bar f(\widetilde{Y}(\kappa_N(s)))\nonumber\\
&\;+\frac{1}{2}\mcal D^2g(\widetilde{Y}(\kappa_N(s)))\big(g(\widetilde{Y}(\kappa_N(s))),g(\widetilde{Y}(\kappa_N(s)))\big)\big](\frac{Ns}{T}-\lfloor \frac{Ns}{T}\rfloor)\ud W(s)+\tilde R_5^N(t) \label{gminus}
\end{align}
with $\mbf E|\tilde R_5^N(t)|^2\le Kh$.

Inserting \eqref{fbarminus2} and \eqref{gminus} into \eqref{UNt} yields \eqref{UNtsimple} with $R_{U^N}(t)=\tilde R_3^N(t)-\tilde R_4^N(t)+\tilde R_5^N(t)$. According to the previous estimates for $\tilde{R}_i(t)$, $i=3,4,5$, it holds that  $\mbf E|R_{U^N}(t)|^2\le Kh$ for any $t\in[0,T]$.
\end{proof}

\begin{lem}\label{UN-UtildeN}
Let Assumption \ref{assum2} hold. Then for any $t\in[0,T]$, $\lim\limits_{N\to\infty}\mbf E|U^N(t)-\widetilde U^N(t)|^2=0$. Here, $\widetilde U^N$ is the strong solution of the following equation 
\begin{align}\label{UtildeN}
	\widetilde U^N(t)=\int_0^t\nabla \bar f(Y(s))\widetilde{U}^N(s)\ud s+\int_0^t\nabla g(Y(s))\widetilde U^N(s)\ud W(s)+\sum_{i=1}^3I_i(t)+I^N_4(t),~t\in[0,T],
\end{align}	
where
\begin{align*}
	I_1(t)&=T\int_0^t\big[\alpha^\top(Ae)\nabla f(Y(s))f(Y(s))+F_3(Y(s))+3F_4(Y(s))\big]\ud s, \\
	I_2(t)&=T\int_0^t\big[F_1(Y(s))+3F_2(Y(s))-\nabla\bar f(Y(s))g(Y(s))\big]\ud W(s),\\
	I_3(t)&=-\frac{T}{2}\int_0^t\big[\nabla \bar f(Y(s))\bar f(Y(s))+\frac{1}{2}\mcal D^2\bar f(Y(s))(g(Y(s)),g(Y(s)))\big]\ud s.
\end{align*}
\end{lem}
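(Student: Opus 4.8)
The assertion is the exact a.s.\ identity $U^N=\widetilde U^N$ for every fixed $N$ (the squared $\mbf L^2$-distance being \emph{exactly} $0$), not a small-$h$ estimate. The argument must therefore produce a genuine coincidence of two processes, and the right device is the \emph{uniqueness of strong solutions} of the linear SDE \eqref{UtildeN}: the equality $\mbf E|U^N(t)-\widetilde U^N(t)|^2=0$ will come from this uniqueness, not from any Gronwall bound on a nonzero source.

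First I would record that \eqref{UtildeN} is an inhomogeneous \emph{linear} SDE in the unknown process, with adapted drift and diffusion coefficients $\nabla\bar f(Y(s))$ and $\nabla g(Y(s))$ and inhomogeneous part $\sum_{i=1}^{3}I_i(t)+I_4^N(t)$. Under Assumption \ref{assum2}, together with the moment bound \eqref{Ybound} for $Y$ and the analogous bounds for $\widetilde Y$, these coefficients lie in every $\mbf L^p(\Omega)$ and the source is an adapted, continuous, square-integrable process. Hence \eqref{UtildeN} admits a \emph{unique} strong solution $\widetilde U^N$, obtained by Picard iteration over adapted square-integrable continuous processes and closed by Gronwall's inequality; this is exactly the meaning of the phrase ``$\widetilde U^N$ is the strong solution of \eqref{UtildeN}''.

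The heart of the proof is to verify that $U^N=N(\widetilde Y-Y)$ is \emph{itself} a strong solution of \eqref{UtildeN}. I would start from the exact representation \eqref{UNtsimple} of Lemma \ref{UNexpression}, in which $U^N$ solves the same linear equation but with inhomogeneous part $\sum_{i=1}^{4}I_i^N(t)+R_{U^N}(t)$. The two linear operators being identical, it suffices to identify the two source processes: one must check that, after passing from the frozen arguments $\widetilde Y(\kappa_N(s))$ to $Y(s)$ in $I_1^N,I_2^N,I_3^N$ and from the piecewise factor $\lfloor Ns/T\rfloor-Ns/T$ in $I_3^N$ to the constant $-\tfrac12$ appearing in $I_3$, the leftover contributions are precisely those already collected in $R_{U^N}=\tilde R_3^N-\tilde R_4^N+\tilde R_5^N$, so that $\sum_{i=1}^{3}I_i^N+R_{U^N}=\sum_{i=1}^{3}I_i$ a.s., while $I_4^N$ is common to both representations. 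Granting this termwise identification, $U^N$ and $\widetilde U^N$ satisfy one and the same linear SDE.

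Finally, set $D^N:=U^N-\widetilde U^N$. Subtracting the two identical equations cancels the common source and leaves the \emph{homogeneous} linear equation $D^N(t)=\int_0^t\nabla\bar f(Y(s))D^N(s)\,\ud s+\int_0^t\nabla g(Y(s))D^N(s)\,\ud W(s)$ with $D^N(0)=0$. Applying the It\^o isometry and the Cauchy--Schwarz inequality to $\mbf E|D^N(t)|^2$ gives $\mbf E|D^N(t)|^2\le K\int_0^t\mbf E|D^N(s)|^2\,\ud s$, and Gronwall's inequality forces $\mbf E|D^N(t)|^2=0$ for every $t\in[0,T]$; equivalently, $D^N\equiv0$ by uniqueness of the strong solution. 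This is exactly $\mbf E|U^N(t)-\widetilde U^N(t)|^2=0$. I expect the main obstacle to be the exact source identification of the third paragraph: in contrast with the earlier convergence estimates, \emph{no} term may be discarded as $O(h^{1/2})$, so each discrepancy produced by replacing $\widetilde Y(\kappa_N(s))$ by $Y(s)$ and by replacing the discretized time factors must be matched term by term against the definition of $R_{U^N}$, leaving nothing over.
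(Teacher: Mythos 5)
There is a genuine gap, and it sits exactly where you yourself flagged it: the ``source identification'' of your third paragraph is false, and with it the whole uniqueness argument collapses. For fixed $N$ one does \emph{not} have $\sum_{i=1}^3 I_i^N+R_{U^N}=\sum_{i=1}^3 I_i$ almost surely. The remainder $R_{U^N}=\tilde R_3^N-\tilde R_4^N+\tilde R_5^N$ is a concrete process produced by the expansion in Lemma \ref{UNexpression}, satisfying only $\sup_{t\in[0,T]}\mbf E|R_{U^N}(t)|^2\le Kh$; it is not defined as whatever is needed to close your identity. And the differences $I_i^N-I_i$ are genuinely nonzero for finite $N$: for instance, $I_2^N(t)-I_2(t)$ is the stochastic integral of $T\big[F_1+3F_2-\nabla\bar f\,g\big]$ evaluated along $\widetilde Y(\kappa_N(s))$ minus the same along $Y(s)$, whose $\mbf L^2$-norm is of order $h^{1/2}$ by \eqref{sec4eq4} --- small, but not zero. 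More decisively, $I_3^N$ carries the sawtooth weight $\lfloor Ns/T\rfloor-Ns/T$ while $I_3$ carries the constant $-\frac{1}{2}$; these integrands differ on a set of full measure for every $N$, and the passage from one to the other is not a pathwise cancellation but an averaging-in-the-limit statement, which the paper obtains from \cite[Proposition 4.2]{HJWY24}. Consequently $U^N$ is \emph{not} a strong solution of \eqref{UtildeN} for any finite $N$, uniqueness of strong solutions is inapplicable, and the exact identity $\mbf E|U^N(t)-\widetilde U^N(t)|^2=0$ at fixed $N$ is simply not true.

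The resolution is that the lemma, as proved and as used, is an asymptotic statement: the paper's proof establishes $\lim_{N\to\infty}\mbf E|U^N(t)-\widetilde U^N(t)|^2=0$ (the displayed statement omits the limit), which is also all that Theorem \ref{maintheorem1} needs, since there the lemma is combined with Slutzky's theorem --- a step that would be vacuous if $U^N$ and $\widetilde U^N$ coincided. The actual argument keeps your Gronwall mechanics but with a small, nonzero source: subtracting \eqref{UtildeN} from \eqref{UNtsimple} and using the It\^o isometry, the H\"older inequality and the boundedness of $\nabla\bar f$, $\nabla g$ gives
\begin{align*}
\mbf E|U^N(t)-\widetilde U^N(t)|^2\le K\int_0^t\mbf E|U^N(s)-\widetilde U^N(s)|^2\,\ud s+K a^N(t),\qquad a^N(t):=\sum_{i=1}^3\mbf E|I_i^N(t)-I_i(t)|^2+\mbf E|R_{U^N}(t)|^2,
\end{align*}
whence $\mbf E|U^N(t)-\widetilde U^N(t)|^2\le a^N(t)+K\int_0^t a^N(s)\,\ud s$ by Gronwall. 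One then shows $a^N(t)\to 0$ pointwise --- the cases $i=1,2$ from \eqref{sec4eq4} together with the It\^o isometry, the case $i=3$ from \cite[Proposition 4.2]{HJWY24}, and $\mbf E|R_{U^N}(t)|^2\le Kh$ from Lemma \ref{UNexpression} --- and, using the uniform bound $\sup_{t\in[0,T]}|a^N(t)|\le K$, concludes by dominated convergence. So your linear structure and your closing Gronwall step are the right mechanics, but the claim that ``nothing is left over'' after matching the sources is precisely what fails: the leftover is $O(h^{1/2})$ and must be sent to zero in the limit, not cancelled exactly.
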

\begin{proof}
We first show that for any $t\in[0,T]$, $I_i^N(t)$ converges to $I_i(t)$ in $\|\cdot\|_{\mbf L^2(\Omega)}$-norm for $i=1,2,3$.
It follows from \eqref{Yt-Ys} and Lemma \ref{Y-Ytilde} that
\begin{align}
\sup_{s\in[0,T]}\|\widetilde{Y}(\kappa_N(s))-Y(s)\|_{\mbf L^p(\Omega)}\le K(p,T)h^{1/2},~\forall~p\ge 1. \label{sec4eq4}
\end{align}
For any Lipschitz function $H$, using \eqref{sec4eq4}, the H\"older inequality 	and the It\^o isometry formula, one has
\begin{align}
	\lim_{N\to\infty}\mbf E\Big|\int_0^t \big(H(\widetilde{Y}(\kappa_N(s)))-H(Y(s))\Big)\ud s\Big|^2&=0,\label{sec4eq5}\\
	\lim_{N\to\infty}\mbf E\Big|\int_0^t \big(H(\widetilde{Y}(\kappa_N(s)))-H(Y(s))\Big)\ud W(s)\Big|^2&=0.\nonumber
\end{align}
Thus, it holds that $\lim\limits_{N\to\infty}\mbf E|I^N_i(t)-I_i(t)|^2=0$	for $i=1,2$ and $t\in[0,T]$. Moreover, applying \eqref{sec4eq5} and \cite[Proposition 4.2]{HJWY24}, we obtain $\lim\limits_{N\to\infty}\mbf E|I^N_3(t)-I_3(t)|^2=0$ for any $t\in[0,T]$.
	
Note that $\nabla \bar{f}$ and $\nabla g$ are bounded functions under Assumption \ref{assum2}.
It follows from \eqref{UNtsimple}, \eqref{UtildeN}, the It\^o isometry formula and the H\"older inequality that
\begin{align*}
\mbf E|U^N(t)-\widetilde U^N(t)|^2\le K\mbf E\int_0^t |U^N(s)-\widetilde U^N(s)|^2\ud s+Ka^N(t),
\end{align*} 	
where $a^N(t):=\mbf E\sum_{i=1}^3|I_i^N(t)-I_i(t)|^2+\mbf E|R_{U^N}(t)|^2$. The Gronwall inequality yields
\begin{align*}
\mbf E|U^N(t)-\widetilde U^N(t)|^2\le a^N(t)+K\int_{0}^ta^N(s)\ud s.
\end{align*}
The convergence of $I_i^N(t)$, $i=1,2,3$ and $\sup_{t\in[0,T]}\mbf E|R_{U^N}(t)|\le Kh$ (see Lemma \ref{UNexpression}) lead to $\lim\limits_{N\to\infty}a^N(t)=0$ for any $t\in[0,T]$. By \eqref{Ytilderegularity}, we arrive at $\sup_{t\in[0,T]}|a^N(t)|\le K(p,T)$. The proof is complete due to the dominated convergence theorem. 
\end{proof}

\begin{lem}\label{IN4}
Let Assumption \ref{assum2} hold. Then $I_4^N\overset{satbly}{\Longrightarrow}I_4$ as $\mbf C([0,T];\mbb R^d)$-valued random variables as $N\to\infty$. Here, $I_4$ is the stochastic process defined by
\begin{align*}
I_4(t)=\frac{T}{2}\int_0^tG_1(Y(s))\ud W(s)+\frac{T}{\sqrt{12}}\int_0^tG_1(Y(s))\ud \widetilde{W}_1(s)+\frac{T}{\sqrt{6}}\int_0^tG_2(Y(s))\ud \widetilde{W}_2(s),~t\in[0,T],
\end{align*} 
where $(\widetilde{W}_1,\widetilde{W}_2)$ is a two-dimensional standard Brownian motion and independent of $W$.
\end{lem}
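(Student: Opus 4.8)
The plan is to apply Jacod's functional stable central limit theorem for continuous martingales (cf.\ \cite{Jacod97,Protter1998AOP,Fukasawa2023}) to $I_4^N$. First I would split $I_4^N=M_1^N+M_2^N$ into its two pieces,
$$M_1^N(t)=T\int_0^tG_1(\widetilde Y(\kappa_N(s)))\Big(\tfrac{Ns}{T}-\lfloor\tfrac{Ns}{T}\rfloor\Big)\ud W(s),\quad M_2^N(t)=N\int_0^tG_2(\widetilde Y(\kappa_N(s)))\int_{\kappa_N(s)}^s\int_{\kappa_N(s)}^r\ud W(u)\ud W(r)\ud W(s).$$
Since $G_1(\widetilde Y(\kappa_N(s)))$ and $G_2(\widetilde Y(\kappa_N(s)))$ are $\mcal F_{\kappa_N(s)}$-measurable and the inner double integral is $\mcal F_s$-measurable, both pieces are continuous $\{\mcal F_t\}$-martingales, so $I_4^N$ is a continuous martingale and Jacod's theorem reduces the claim to identifying the limits of the matrix-valued bracket $\LL I_4^N,I_4^N\RR$ and of the cross bracket $\LL I_4^N,W\RR$, together with a negligibility condition.

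Next I would compute these brackets. For the $G_1$ part, the substitution $u=\frac{N(s-\kappa_N(s))}{T}$ on each subinterval with $\int_0^1u^2\ud u=\frac13$ gives $\LL M_1^N,M_1^N\RR_t=T^2\int_0^tG_1G_1^\top(\frac{Ns}{T}-\lfloor\frac{Ns}{T}\rfloor)^2\ud s\to\frac{T^2}{3}\int_0^tG_1(Y(s))G_1(Y(s))^\top\ud s$, where I use the uniform convergence $\widetilde Y(\kappa_N(s))\to Y(s)$ from Lemma \ref{Y-Ytilde} and Riemann-sum convergence to replace the integrand; similarly $\int_0^1u\,\ud u=\frac12$ yields $\LL M_1^N,W\RR_t=T\int_0^tG_1(\frac{Ns}{T}-\lfloor\frac{Ns}{T}\rfloor)\ud s\to\frac{T}{2}\int_0^tG_1(Y(s))\ud s$. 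For the $G_2$ part, the conditional isometry $\mbf E[(\int_{\kappa_N(s)}^s\int_{\kappa_N(s)}^r\ud W\ud W)^2\,|\,\mcal F_{\kappa_N(s)}]=\frac12(s-\kappa_N(s))^2$ produces the conditional mean $\frac{T^2}{6}\int_0^tG_2(Y(s))G_2(Y(s))^\top\ud s$ of $\LL M_2^N,M_2^N\RR_t$, and the fluctuation about it is a sum of conditionally centred, across-interval-orthogonal terms whose variance is $O(h)$, so the convergence holds in probability. Finally both $\LL M_2^N,W\RR_t$ and $\LL M_1^N,M_2^N\RR_t$ carry the mean-zero factor $\int\int\ud W\ud W$; exactly as in the estimate of $\tilde R_1^N$ in Lemma \ref{UNexpression}, the across-interval orthogonality of martingale increments makes their $\mbf L^2(\Omega)$-norms $O(\sqrt h)$, so both vanish.

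Assembling these limits, $\LL I_4^N,I_4^N\RR_t\to\frac{T^2}{3}\int_0^tG_1G_1^\top\ud s+\frac{T^2}{6}\int_0^tG_2G_2^\top\ud s$ and $\LL I_4^N,W\RR_t\to\frac{T}{2}\int_0^tG_1\ud s$, which are precisely $\LL I_4,I_4\RR$ and $\LL I_4,W\RR$ for the stated limit: the coefficient $\frac{T}{2}$ of $\int G_1\ud W$ is forced by the cross bracket, the residual $G_1$-variance $\frac{T^2}{3}-\frac{T^2}{4}=\frac{T^2}{12}$ must then be carried by a source independent of $W$, giving $\frac{T}{\sqrt{12}}\int G_1\ud\widetilde W_1$, while the $G_2$ part, being asymptotically orthogonal to $W$, contributes $\frac{T}{\sqrt6}\int G_2\ud\widetilde W_2$, with $\widetilde W_1,\widetilde W_2$ standard and independent of each other and of $W$. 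As all martingales are continuous with uniformly $\mbf L^1(\Omega)$-bounded brackets (via the moment bounds \eqref{Ytilderegularity}), tightness in $\mbf C([0,T];\mbb R^d)$ and the Lindeberg condition are automatic, and Jacod's theorem yields the stable, hence in-distribution, convergence $I_4^N\overset{d}{\Longrightarrow}I_4$.

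The main obstacle I anticipate is the genuine \emph{emergence} of a $W$-independent component out of $M_1^N$, which is itself driven only by $W$: one must verify that $\LL M_1^N,M_1^N\RR$ does not collapse to the fully correlated value (the square of the $\LL M_1^N,W\RR$ density), and it is exactly the gap between $\frac13$ (from $\int_0^1u^2$) and $\frac14$ (from $(\int_0^1u)^2$) that forces the appearance of $\widetilde W_1$. The second delicate point is upgrading the convergence of $\LL M_2^N,M_2^N\RR$ from expectations to convergence \emph{in probability} of the random bracket, for which the across-interval martingale orthogonality argument (as used for $\tilde R_1^N,\tilde R_2^N$ in Lemma \ref{UNexpression}) is essential.
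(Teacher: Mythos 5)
Your proposal is correct and follows essentially the same route as the paper: both apply Jacod's theorem (\cite[Theorem 4-1]{Jacod97}) after computing the limits $\LL I_4^N,W\RR_t\to\frac{T}{2}\int_0^tG_1(Y(s))\ud s$ and $\LL I_4^{N},I_4^{N}\RR_t\to\frac{T^2}{3}\int_0^tG_1G_1^\top\ud s+\frac{T^2}{6}\int_0^tG_2G_2^\top\ud s$, kill the cross terms by the across-interval martingale orthogonality, and identify the limit by splitting the quadratic variation into the $W$-correlated part $\frac{T^2}{4}$ and the residual $\frac{T^2}{12}$ and $\frac{T^2}{6}$ carried by independent Brownian motions (which the paper formalizes via \cite[Proposition 1-4]{Jacod97}). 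The only cosmetic difference is that you invoke Riemann-sum/averaging arguments where the paper cites \cite[Proposition 4.2]{HJWY24} for the sawtooth-weighted integrals, and you use the conditional isometry where the paper expands $\big(\int\!\!\int\ud W\ud W\big)^2$ by the It\^o formula; these are equivalent computations.
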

\begin{proof}
The proof is mainly based on \cite[Theorem 4-1]{Jacod97}. Hereafter, denote by $\LL X, Y\RR_t$, $t\in[0,T]$ the cross variation process between the real-valued semi-martingales $\{X(t)\}_{t\in[0,T]}$ and $\{Y(t)\}_{t\in[0,T]}$. Let $I_4^{N,i}$ denote the $i$th component of $I_4^N$ for $i=1,\ldots,d$.  Recall that
\begin{align*}
I_4^N(t)=&\;T\int_0^tG_1(\widetilde{Y}(\kappa_N(s)))(\frac{Ns}{T}-\lfloor\frac{Ns}{T}\rfloor)\ud W(s)
\\&\;+N\int_0^tG_2(\widetilde{Y}(\kappa_N(s)))\int_{\kappa_N(s)}^s\int_{\kappa_N(s)}^r\ud W(u)\ud W(r)\ud W(s).
\end{align*}
It is not hard to see $\LL I^{N,i}_4,W\RR_t=B_1^N(t)+B_2^N(t)$ with
\begin{align*}
B_1^N(t)=&\;T\int_0^tG_1^i(\widetilde{Y}(\kappa_N(s)))\big(\frac{Ns}{T}-\lfloor\frac{Ns}{T}\rfloor\big)\ud s,\\
B_2^N(t)=&\;N\int_0^tG_2^i(\widetilde{Y}(\kappa_N(s)))\int_{\kappa_N(s)}^s\int_{\kappa_N(s)}^r\ud W(u)\ud W(r)\ud s.
\end{align*}
Applying \eqref{sec4eq5} and \cite[Proposition 4.2]{HJWY24}, we arrive at $\lim\limits_{N\to\infty}\mbf E|B_1^N(t)-\frac{T}{2}\int_0^tG_1^i(Y(s))\ud s|^2=0$. Further, similar to the proof of \eqref{sec3eq4}, one has $\lim\limits_{N\to\infty}\mbf E|B_2^N(t)|^2=0$. Thus, for any $t\in[0,T]$ and $i=1,\ldots,d$,
\begin{align}
\LL I^{N,i}_4,W\RR_t\to \frac{T}{2}\int_0^tG_1^i(Y(s))\ud s~\text{in probability},~\forall~t\in[0,T],~i=1,\ldots,d.\label{IN4converge1}
\end{align} 

Next, we derive the limit of $\LL I_4^{N,i},I_4^{N,j}\RR_t$. A direct computation leads to
\begin{align*}
&\;\LL I_4^{N,i},I_4^{N,j}\RR_t=T^2\int_0^tG_1^i(\widetilde{Y}(\kappa_N(s)))G_1^j(\widetilde{Y}(\kappa_N(s)))	\big(\frac{Ns}{T}-\lfloor\frac{Ns}{T}\rfloor\big)^2\ud s\\
&\;+N^2\int_{0}^tG_2^i(\widetilde{Y}(\kappa_N(s)))G_2^j(\widetilde{Y}(\kappa_N(s)))\big(\int_{\kappa_N(s)}^s\int_{\kappa_N(s)}^r\ud W(u)\ud W(r)\big)^2\ud s+B_3^N(t),
\end{align*}
where $B_3^N(t)=TN\int_0^t\big[G_1^i(\widetilde{Y}(\kappa_N(s)))G_2^j(\widetilde{Y}(\kappa_N(s)))+G_1^j(\widetilde{Y}(\kappa_N(s)))G_2^i(\widetilde{Y}(\kappa_N(s)))\big]\\ \int_{\kappa_N(s)}^s\int_{\kappa_N(s)}^r\ud W(u)\ud W(r)\big(\frac{Ns}{T}-\lfloor\frac{Ns}{T}\rfloor\big)\ud s$.
It follows from the It\^o formula and \eqref{DW2} that
\begin{align*}
&\;\Big(\int_{\kappa_N(s)}^s\int_{\kappa_N(s)}^r\ud W(u)\ud W(r)\Big)^2=2\int_{\kappa_N(s)}^s\int_{\kappa_N(s)}^r(W(u)-W(\kappa_N(s)))\ud W(u)\big(W(r)-W(\kappa_N(s))\big)\ud W(r)\\
&\;+2\int_{\kappa_N(s)}^s\int_{\kappa_N(s)}^r(W(u)-W(\kappa_N(s)))\ud W(u)\ud r+\frac{1}{2}(s-\kappa_N(s))^2.
\end{align*}
Combining the above  relations, we have
\begin{align*}
	\LL I_4^{N,i},I_4^{N,j}\RR_t=&\;T^2\int_0^tG_1^i(\widetilde{Y}(\kappa_N(s)))G_1^j(\widetilde{Y}(\kappa_N(s)))	\big(\frac{Ns}{T}-\lfloor\frac{Ns}{T}\rfloor\big)^2\ud s\\
	&\;+\frac{T^2}{2}\int_0^tG_2^i(\widetilde{Y}(\kappa_N(s)))G_2^j(\widetilde{Y}(\kappa_N(s)))	\big(\frac{Ns}{T}-\lfloor\frac{Ns}{T}\rfloor\big)^2\ud s+B_3^N(t)+B_4^N(t),	
\end{align*}
where
\begin{align*}
	&\;B_4^N(t)=2N^2\int_0^tG_2^i(\widetilde{Y}(\kappa_N(s)))G_2^j(\widetilde{Y}(\kappa_N(s)))\Big\{\int_{\kappa_N(s)}^s\Big[\int_{\kappa_N(s)}^r(W(u)-W(\kappa_N(s)))\ud W(u)\\
	&\;\big(W(r)-W(\kappa_N(s))\big)\Big]\ud W(r)+\int_{\kappa_N(s)}^s\int_{\kappa_N(s)}^r(W(u)-W(\kappa_N(s)))\ud W(u)\ud r\Big\}\ud s.
\end{align*}
Similar to the proof of \eqref{sec3eq4}, one can show that $\mbf E|B_3^N(t)|^2+\mbf E|B_4^N(t)|^2\le Kh\to 0$, as $N\to\infty$. Further, using \cite[Proposition 4.2]{HJWY24} yields $\int_0^tG_l^i(\widetilde{Y}(\kappa_N(s)))G_l^j(\widetilde{Y}(\kappa_N(s)))	\big(\frac{Ns}{T}-\lfloor\frac{Ns}{T}\rfloor\big)^2\ud s$ converges to $\frac{1}{3}\int_0^tG_l^i(Y(s))G_l^j(Y(s))	\ud s$ in $\mbf L^2(\Omega,\mcal F,\mbf P;\mbb R^d)$ for $l=1,2$. In this way, we obtain
\begin{align}
	\LL I_4^{N,i},I_4^{N,j}\RR_t\to \frac{T^2}{3}\int_0^tG_1^i(Y(s))G_1^j(Y(s))	\ud s+\frac{T^2}{6}\int_0^tG_2^i(Y(s))G_2^j(Y(s))\ud s \label{IN4converge2}
\end{align}
in probability for any $t\in[0,T]$ and $i,j=1,\ldots,d$.

Combining \eqref{IN4converge1}-\eqref{IN4converge2} and using \cite[Theorem 4-1]{Jacod97}, we obtain $I_4^N\overset{stably}{\Longrightarrow} \Psi$ as $\mbf C([0,T];\mbb R^d)$-valued random variables as $N\to\infty$, and 
\begin{align}
	&\;\LL \Psi^i,W\RR_t=\frac{T}{2}\int_0^tG_1^i(Y(s))\ud s, \label{VWcross}\\
	&\;\LL \Psi^i,\Psi^j\RR_t=\frac{T^2}{3}\int_0^tG_1^i(Y(s))G_1^j(Y(s))	\ud s+\frac{T^2}{6}\int_0^tG_2^i(Y(s))G_2^j(Y(s))\ud s.\label{VVcross}
\end{align}
Further, it follows from \cite[Proposition 1-4]{Jacod97} that $\Psi^i$, $i=1,\ldots,d$ can be represented as
\begin{align*}
	\Psi^i(t)=\int_{0}^tu^i(s)\ud W(s)+\sum_{k=1}^d\int_0^tv^{i,k}(s)\ud \widetilde{W}_k(s),
\end{align*}
where $\widetilde{W}=\big(\widetilde{W}_1,\ldots,\widetilde{W}_d\big)$ is a $d$-dimensional standard Brownian motion and independent of $W$.  By \eqref{VWcross}, we have $u^i(s)=\frac{T}{2}G_1^i(Y(s))$, $i=1,\ldots,d$, which along with \eqref{VVcross} gives
\begin{align*}
\sum_{k=1}^dv^{i,k}(s)v^{j,k}(s)=\frac{T^2}{12}G_1^i(Y(s))G_1^j(Y(s))+\frac{T^2}{6}G_2^i(Y(s))G_2^j(Y(s)).
\end{align*}
We can take $v^{i,k}(s)=0$, $i=1,\ldots,d$, $k=3,\ldots,d$,  $v^{i,1}(s)=\frac{T}{\sqrt{12}}G_1^i(Y(s))$ and $v^{i,2}(s)=\frac{T}{\sqrt{6}}G_2^i(Y(s))$, $i=1,\ldots,d$. Thus the proof is complete by letting $I_4=\Psi$. 
\end{proof}

With previous preparation, now we can give the limit distribution of $U^N(t)$. 
\begin{theo}\label{maintheorem1}
	Let Assumption \ref{assum2} hold. Then for any $t\in[0,T]$, we have $U^N(t)\overset{d}{\Longrightarrow}U(t)$ as $N\to\infty$, and thus $N(Y^{SRK}_N-Y(T))\overset{d}{\Longrightarrow}U(T)$. Here, $U=\{U(t)\}_{t\in[0,T]}$ is the strong solution of the following equation
	\begin{align}
	&\;U(t)=\int_0^t\nabla \bar{f}(Y(s))U(s)\ud s+\int_0^t\nabla g(Y(s))U(s)\ud W(s)\nonumber\\
	&\;	+T\int_0^t\big[\alpha^\top(Ae)\nabla f(Y(s))f(Y(s))+F_3(Y(s))+3F_4(Y(s))\big]\ud s\nonumber\\
	&\;-\frac{T}{2}\int_0^t\big[\nabla \bar f(Y(s))\bar{f}(Y(s))+\frac{1}{2}\mcal D^2\bar f(Y(s))(g(Y(s)),g(Y(s)))\big]\ud s\nonumber\\
	&\;+T\int_0^t\big[F_1(Y(s))+3F_2(Y(s))-\nabla\bar f(Y(s))g(Y(s))\big]\ud W(s)\nonumber\\
	&\;+T\int_0^t\big[\nabla\bar f(Y(s))g(Y(s))-\nabla g(Y(s))\bar f(Y(s))-\frac{1}{2}\mcal D^2 g(Y(s))(g(Y(s)),g(Y(s)))\big]\ud \Big(\frac{W(s)}{2}+\frac{\widetilde{W}_1(s)}{\sqrt{12}}\Big)\nonumber\\
	&\;+\frac{T}{\sqrt{6}}\int_0^t\big[6F_2(Y(s))-(\nabla g(Y(s)))^2g(Y(s))-\mcal D^2 g(Y(s))(g(Y(s)),g(Y(s)))\big]\ud \widetilde W_2(s). \label{Ut}
	\end{align}
	Here,  $(\widetilde{W}_1,\widetilde{W}_2)$ is a two-dimensional standard Brownian motion and independent of $W$. In addition, $F_i$, $i=1,2,3,4$ are those defined in Lemma \ref{YSRKexpan}.
\end{theo}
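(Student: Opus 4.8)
The plan is to reduce everything to the weak convergence of the linear normalized-error process and then transport the stable convergence of Lemma \ref{IN4} through the solution map of a fixed linear SDE. First, by Lemma \ref{UN-UtildeN} we have $U^N(t)=\widetilde U^N(t)$ almost surely, so it suffices to prove $\widetilde U^N(t)\overset{d}{\Longrightarrow}U(t)$. The starting observation is that \eqref{Ut} is nothing but equation \eqref{UtildeN} with the forcing term $I_4^N$ replaced by its limit $I_4$: matching the last two lines of \eqref{Ut} against the definition of $I_4$ in Lemma \ref{IN4}, and recalling $G_1$ and $G_2$ from Lemma \ref{UNexpression}, one checks that $T\int_0^t G_1(Y(s))\,\ud(\tfrac{W(s)}{2}+\tfrac{\widetilde W_1(s)}{\sqrt{12}})+\tfrac{T}{\sqrt 6}\int_0^t G_2(Y(s))\,\ud\widetilde W_2(s)$ coincides with $I_4(t)$, while the remaining terms of \eqref{Ut} are exactly $\sum_{i=1}^3 I_i(t)$ together with the homogeneous part $\int_0^t\nabla\bar f(Y)U\,\ud s+\int_0^t\nabla g(Y)U\,\ud W$. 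Hence $\widetilde U^N$ and $U$ solve the \emph{same} linear SDE driven by $W$, with identical ($N$-independent) coefficients $\nabla\bar f(Y)$, $\nabla g(Y)$ and identical inhomogeneity $\sum_{i=1}^3 I_i$; they differ only through the driving processes $I_4^N$ and $I_4$. Since $\nabla\bar f$ and $\nabla g$ are bounded under Assumption \ref{assum2}, the linear SDE \eqref{Ut} has a unique square-integrable strong solution $U$ by standard theory.

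Next I would represent the solution explicitly. Let $\Phi$ be the fundamental matrix solution of the homogeneous equation $\ud\Phi(t)=\nabla\bar f(Y(t))\Phi(t)\,\ud t+\nabla g(Y(t))\Phi(t)\,\ud W(t)$, $\Phi(0)=I_d$; because the coefficients do not depend on $N$, $\Phi$ and $\Phi^{-1}$ are fixed adapted processes with finite moments of all orders. Writing $K^N:=\sum_{i=1}^3 I_i+I_4^N$ for the forcing, the variation-of-constants formula gives $\widetilde U^N(t)=\Phi(t)\int_0^t\Phi^{-1}(s)\,\ud K^N(s)-\Phi(t)\int_0^t\Phi^{-1}(s)\nabla g(Y(s))\,\ud\LL K^N,W\RR_s$, and the analogous identity holds for $U$ with $K^N$ replaced by $K:=\sum_{i=1}^3 I_i+I_4$. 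Thus $\widetilde U^N(t)$ is a fixed continuous functional of the pair $(W,K^N)$, and to conclude I must pass to the limit in these two integrals.

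The decisive input is that Lemma \ref{IN4}, being proved through \cite[Theorem 4-1]{Jacod97}, delivers more than the marginal statement $I_4^N\overset{d}{\Longrightarrow}I_4$: it yields the $W$-stable convergence of $I_4^N$, that is, the joint convergence of $(W,I_4^N)$ together with the convergence of the cross-variation $\LL I_4^N,W\RR_t\to\tfrac{T}{2}\int_0^t G_1(Y(s))\,\ud s$ in probability, as recorded in \eqref{IN4converge1}. Consequently $\LL K^N,W\RR_t$ converges in probability (the contributions of the finite-variation terms $I_1,I_3$ are null and that of $I_2$ is fixed), so the cross-variation correction integral converges, while the It\^o integral $\int_0^t\Phi^{-1}(s)\,\ud I_4^N(s)$ converges in law, jointly with $W$, to $\int_0^t\Phi^{-1}(s)\,\ud I_4(s)$ by the stability of stochastic integration against stably convergent integrators with a fixed continuous integrand. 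Assembling these pieces through the continuous mapping theorem yields $\widetilde U^N(t)\overset{d}{\Longrightarrow}U(t)$ for every $t\in[0,T]$.

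I expect the main obstacle to be exactly this last convergence of the stochastic-integral term: It\^o integration is not continuous under mere convergence in distribution, so the continuous mapping theorem for the uniform topology cannot be invoked naively. The point that rescues the argument is that the integrand $\Phi^{-1}$ is a single fixed adapted process (it does not vary with $N$), so no tightness issue arises on the integrand side, and the $W$-stable convergence furnished by Jacod's theorem is precisely the mode under which $\int\Phi^{-1}\,\ud I_4^N$ passes to the limit; alternatively one may verify the uniformly-controlled-variations condition for $\{I_4^N\}$ using the quadratic-variation estimate \eqref{IN4converge2} and invoke a Kurtz--Protter-type stability theorem. Finally, since $U^N(T)=N(\widetilde Y_N-Y(T))$, the convergence $U^N(T)\overset{d}{\Longrightarrow}U(T)$ together with Lemma \ref{samedistri} and Slutzky's theorem gives $N(Y^{SRK}_N-Y(T))\overset{d}{\Longrightarrow}U(T)$, completing the proof.
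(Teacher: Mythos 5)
Your proposal is correct in substance but follows a genuinely different route from the paper. The paper never writes a variation-of-constants formula: it invokes the Skorohod representation theorem to replace the weak convergence $I_4^N\overset{d}{\Longrightarrow}I_4$ of Lemma \ref{IN4} by almost sure convergence on an enlarged probability space, upgrades this to $\lim_{N\to\infty}\mbf E\sup_{t\in[0,T]}|I_4^N(t)-I_4(t)|^2=0$ via moment bounds and uniform integrability, and then runs a Gronwall argument directly on the equations \eqref{UtildeN} and \eqref{Ut} to get $\mbf L^2$-convergence of $\widetilde U^N$ to $U$, hence convergence in distribution. You instead express $\widetilde U^N(t)$ as a fixed functional of $(W,K^N)$ through the fundamental matrix $\Phi$ (your formula, including the correction term $-\Phi(t)\int_0^t\Phi^{-1}(s)\nabla g(Y(s))\,\ud\LL K^N,W\RR_s$, is the right one for a linear It\^o SDE) and pass to the limit using the stable convergence furnished by Jacod's theorem together with a Kurtz--Protter/UCV stability theorem for stochastic integrals; the UCV condition indeed holds since the $I_4^N$ are continuous martingales with uniformly bounded expected quadratic variation. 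Your route buys a cleaner treatment of joint convergence: the paper's appeal to Skorohod representation tacitly requires preserving the joint law of $(I_4^N,W,Y,\widetilde Y)$, which is precisely the stable convergence you make explicit; the paper's route buys elementariness, since once a.s. convergence is in hand only Doob's inequality, Gronwall and dominated convergence are needed, with no semimartingale-limit machinery. Two points in your write-up need repair, both fixable with tools already in the paper: (i) Lemma \ref{UN-UtildeN} should be read as $\lim_{N\to\infty}\mbf E|U^N(t)-\widetilde U^N(t)|^2=0$ (that is what its proof establishes; the stated equality to zero is a typo), so the reduction to $\widetilde U^N$ goes through Slutzky's theorem, exactly as in the paper, and not through an almost-sure identification; (ii) the convergence of the correction integral $\int_0^t\Phi^{-1}(s)\nabla g(Y(s))\,\ud\LL I_4^N,W\RR_s$ does not follow from the pointwise-in-$t$ convergence \eqref{IN4converge1} alone --- you must use the explicit sawtooth form of $\LL I_4^N,W\RR$ and the weighted-averaging result \cite[Proposition 4.2]{HJWY24}, plus the $\mbf L^2$-estimate killing the $B_2^N$-type term, i.e., essentially the same computations the paper performs inside the proof of Lemma \ref{IN4}.
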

\begin{proof}
By Proposition \ref{Pro1} and Lemma \ref{IN4}, $(I_1,I_2,I_3,I_4^N)\overset{stably}{\Longrightarrow}(I_1,I_2,I_3,I_4)$ in $\mbf C([0,T];\mbb R^d)^{\otimes4}$ as $N\to\infty$. 	Note that a continuous mapping maps a family of random variables converging stably in law into another family of random variables converging stably in law, which can be directly shown according to the definition of stable convergence. Thus, we obtain $\Phi^N:=I_1+I_2+I_3+I_4^N\overset{stably}{\Longrightarrow}\Phi:=I_1+I_2+I_3+I_4$ in $\mbf C([0,T];\mbb R^d)$. 

Note that the coefficients $\nabla \bar f$ and $\nabla g$   of \eqref{UtildeN} are bounded and $\mcal D^2\bar f,\mcal D^2g\in\mbf F$. In addition,
a direct computation leads to 
\begin{gather*}
	\sup_{N\ge 1}\sup_{t\in[0,T]}\mbf E|\Phi^N(t)|^4+\sup_{t\in[0,T]}\mbf E|\Phi(t)|^4<\infty,\\
	\sup_{N\ge 1}\mbf E|\Phi^N(t)-\Phi^N(s)|^2+\mbf E|\Phi(t)-\Phi(s)|^2\le K|t-s|.
\end{gather*}
The above facts and \eqref{Ybound}-\eqref{Yt-Ys} enable us to
apply Theorem \ref{stablecon} to inferring $\widetilde{U}^N\overset{d}{\Longrightarrow} V$, with $V$ solving
\begin{align*}
	V(t)=\int_0^t\nabla\bar f(Y(s))V(s)\ud s+\int_0^t\nabla g(Y(s))V(s)\ud W(s)+\sum_{i=1}^4I_i(t),~t\in[0,T].
\end{align*}
Note that $V$ is nothing but $U$. Consequently, for any $t\in[0,T]$, $\widetilde{U}^N(t)\overset{d}{\Longrightarrow} U(t)$,
which together with Lemma \ref{UN-UtildeN} and  Slutzky's theorem yields ${U}^N(t)\overset{d}{\Longrightarrow} U(t)$ for any $t\in[0,T]$. Noting $U^N(T)=N(\widetilde{Y}_N-Y(T))$ and using Lemma \ref{samedistri}, we obtain $N(Y^{SRK}_N-Y(T))\overset{d}{\Longrightarrow} U(T)$. Thus, the proof is complete.
\end{proof}

\subsection{Properties of limit equation}
In this part, we study the properties of $U(t)$ and reveal the relationship between  $\mbf E|U(T)|^2$ and the weak convergence of the SRK method \eqref{SRK1}. 
A direct computation yields
\begin{align*}
	(\nabla \bar{f})\bar f&=(\nabla f)f+\frac{1}{2}(\nabla f\nabla g)g+\frac{1}{2}\mcal D^2g(f,g)+\frac{1}{4}\mcal D^2g(g,(\nabla g)g)+\frac{1}{2}(\nabla g)^2f+\frac{1}{4}(\nabla g)^3g, \\
	\mcal D^2\bar f(g,g)&=\mcal D^2f(g,g)+\frac{1}{2}(\nabla g)\mcal D^2g(g,g)+\mcal D^2g(g,(\nabla g)g)+\frac{1}{2}\mcal D^3g(g,g,g),\\
	(\nabla \bar f)g&=(\nabla f)g+\frac{1}{2}\mcal D^2g(g,g)+\frac{1}{2}(\nabla g)^2g.
\end{align*}
Plugging the above three relations into \eqref{Ut} and using the definitions of $F_i$, $i=1,2,3,4$ (see Lemma \ref{YSRKexpan}), we have
\begin{align}
U(t)=	&\;\int_0^t\nabla \bar f(Y(s))U(s)\ud s+\int_0^t\nabla g(Y(s))U(s)\ud W(s)+T\int_0^t\mbf H_1(Y(s))\ud s \notag\\
	&\;+T\int_0^t\mbf H_2(Y(s))\ud W(s)
+\frac{T}{\sqrt{12}}\int_0^t\big[\nabla f(Y(s))g(Y(s))-\nabla g(Y(s))f(Y(s))\big]\ud \widetilde{W}_1(s)\nonumber\\
&\;+\frac{T}{\sqrt{6}}\int_0^t\mbf H_3(Y(s))\ud \widetilde W_2(s),\label{Utsimple}
\end{align}
where 
\begin{align*}
	&\;\mbf H_1(y)=\big(\alpha^\top(Ae)-\frac{1}{2}\big)\nabla f(y)f(y)+\big(\alpha^\top(B(Be))-\frac{1}{4}\big)\nabla f(y)\nabla g(y)g(y) \notag\\
	&\;+\frac{1}{2}\big(\alpha^\top(Be)^2-\frac{1}{2}\big)\mcal D^2f(y)(g(y),g(y))+\beta^\top (A(Be))\nabla g(y)\nabla f(y)g(y) \notag\\
&\;+\frac{3}{2}\big(\beta^\top(B(Be)^2)-\frac{1}{12}\big)\nabla g(y)\mcal D^2g(y)(g(y),g(y))+\big(\beta^\top(B(Ae))-\frac{1}{4}\big)(\nabla g(y))^2f(y)\notag \\
&\;+3\big(\beta^\top(B(B(Be)))-\frac{1}{24}\big)(\nabla g(y))^3g(y)+\big(\beta^\top((Ae)*(Be))-\frac{1}{4}\big)\mcal D^2g(y)(f(y),g(y))\notag\\
&\;+3\big(\beta^\top((Be)*(B(Be)))-\frac{1}{8}\big)\mcal D^2g(y)(g(y), \nabla g(y)g(y)) +\frac{1}{2}\big(\beta^\top(Be)^3-\frac{1}{4}\big)\mcal D^3g(y)(g(y),g(y),g(y)),\\
&\; \mbf H_2(y)=\big(\alpha^\top(Be)-\frac{1}{2}\big)\nabla f(y)g(y)+\big(\beta^\top(Ae)-\frac{1}{2}\big)\nabla g(y)f(y)\notag\\
&\;+3\big(\beta^\top(B(Be))-\frac{1}{6}\big)(\nabla g(y))^2g(y)+\frac{3}{2}\big(\beta^\top(Be)^2-\frac{1}{3}\big)\mcal D^2g(y)(g(y),g(y)),\notag\\
&\;\mbf H_3(y)=6\big(\beta^\top(B(Be))-\frac{1}{6}\big)(\nabla g(y))^2g(y)+3\big(\beta^\top(Be)^2-\frac{1}{3}\big)\mcal D^2 g(y)(g(y),g(y)).
\end{align*}

Define
\begin{align}
	\eta_1=&\;(\alpha^\top(Ae)-\frac{1}{2})^2+(\alpha^\top(B(Be))-\frac{1}{4})^2+(\alpha^\top(Be)^2-\frac{1}{2})^2+(\beta^\top(A(Be)))^2+(\beta^\top(B(Be)^2)-\frac{1}{12})^2\notag\\
	&\;+(\beta^\top(B(Ae))-\frac{1}{4})^2+(\beta^\top(B(B(Be)))-\frac{1}{24})^2+(\beta^\top((Ae)*(Be))-\frac{1}{4})^2\notag\\
	&\;+(\beta^\top((Be)*(B(Be)))-\frac{1}{8})^2+(\beta^\top(Be)^3-\frac{1}{4})^2+(\alpha^\top(Be)-\frac{1}{2})^2+(\beta^\top(Ae)-\frac{1}{2})^2\notag\\
	&\;+(\beta^\top(B(Be))-\frac{1}{6})^2+(\beta^\top(Be)^2-\frac{1}{3})^2.\label{eta1}
\end{align} 
Note that $\int_0^t\nabla \bar f(Y(s))U(s)\ud s+\int_0^t\nabla g(Y(s))U(s)\ud W(s)=\int_0^t\nabla  f(Y(s))U(s)\ud s+\int_0^t\nabla g(Y(s))U(s)\circ\ud W(s)$. 
Next, we have the following upper estimate for $\mbf E|U(T)|^2$.
\begin{theo}\label{upperbound}
	Let Assumption \ref{assum2} hold and $T\ge 1$.  Then there exists $L_1>0$ independent of $T$ such that
	\begin{align*}
		\mbf E|U(T)|^2\le e^{L_1T}(1+\eta_1)T^3.
	\end{align*} 
\end{theo}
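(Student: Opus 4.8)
The plan is to apply It\^o's formula to $|U(t)|^2$ using the representation \eqref{Utsimple}, take expectations so the martingale parts vanish, and then close a Gronwall inequality in $\phi(t):=\mbf E|U(t)|^2$. Since every term in \eqref{Utsimple} is an integral starting at $0$, we have $U(0)=0$ and hence $\phi(0)=0$. Under Assumption \ref{assum2} the linear coefficients $\nabla\bar f(Y(\cdot))$ and $\nabla g(Y(\cdot))$ are bounded (as already noted in the proof of Lemma \ref{UN-UtildeN}) and the inhomogeneous coefficients grow at most linearly, so $U$ has finite second moments and It\^o's formula is justified. Because $\eta_1$ only depends on the SRK coefficients and every constant I generate will depend only on $f,g,Y_0$, the final exponent $L_1$ will be independent of $T$.

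First I would take expectations in the It\^o expansion of $|U(t)|^2$ to obtain
\begin{align*}
\phi(t)=\mbf E\int_0^t\Big[&2\LL U(s),\nabla\bar f(Y(s))U(s)+T\mbf H_1(Y(s))\RR+\big|\nabla g(Y(s))U(s)+T\mbf H_2(Y(s))\big|^2\\
&+\tfrac{T^2}{12}\big|\nabla f(Y(s))g(Y(s))-\nabla g(Y(s))f(Y(s))\big|^2+\tfrac{T^2}{6}|\mbf H_3(Y(s))|^2\Big]\ud s.
\end{align*}
Using the boundedness of $\nabla\bar f$ and $\nabla g$ and Young's inequality on the cross terms $2T\LL U,\mbf H_1\RR$ and $2T\LL\nabla g U,\mbf H_2\RR$, this reduces to
\begin{align*}
\phi(t)\le C_1\int_0^t\phi(s)\ud s+C_2T^2\int_0^t\mbf E\big[|\mbf H_1|^2+|\mbf H_2|^2+|\mbf H_3|^2+|\nabla f g-\nabla g f|^2\big](Y(s))\ud s,
\end{align*}
where $C_1,C_2$ depend only on $f,g$.

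The crux is the $\eta_1$-estimate. Each of $\mbf H_1,\mbf H_2,\mbf H_3$ is a linear combination $\sum_k c_k\Phi_k$ in which every scalar coefficient $c_k$ is, up to a fixed numerical factor, one of the quantities whose square appears in \eqref{eta1}, while each $\Phi_k$ lies in $\mcal H$ and is therefore Lipschitz with at most linear growth. By Cauchy--Schwarz, $|\mbf H_i(y)|^2\le C(\sum_k c_k^2)(1+|y|^2)\le C\eta_1(1+|y|^2)$ for $i=1,2,3$; combined with the moment bound \eqref{Ybound}, which gives $\mbf E|Y(s)|^2\le Ce^{LT}$ for some $L$ independent of $T$, this yields $\mbf E|\mbf H_i(Y(s))|^2\le C\eta_1 e^{LT}$. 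By contrast the commutator $\nabla f g-\nabla g f$ carries no $\eta_1$-factor, so $\mbf E|\nabla f(Y(s))g(Y(s))-\nabla g(Y(s))f(Y(s))|^2\le Ce^{LT}$; this $\eta_1$-free term is precisely what produces the summand $1$ in $(1+\eta_1)$. Substituting and using $t\le T$ gives $\phi(t)\le C_1\int_0^t\phi(s)\ud s+C_2(1+\eta_1)T^3 e^{LT}$, whence Gronwall's inequality produces $\phi(T)\le C_2(1+\eta_1)T^3 e^{(C_1+L)T}$.

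Finally, since $T\ge 1$, the multiplicative constant $C_2$ can be absorbed into the exponential: choosing $L_1$ so that $C_2 e^{(C_1+L)T}\le e^{L_1T}$ for all $T\ge 1$ gives $\mbf E|U(T)|^2\le e^{L_1T}(1+\eta_1)T^3$. I expect the main obstacle to be the bookkeeping in the $\eta_1$-estimate, namely verifying that every coefficient appearing in $\mbf H_1,\mbf H_2,\mbf H_3$ is matched (up to a bounded numerical factor) by a summand of \eqref{eta1} and that the commutator is the sole source of the additive $1$; the remaining steps --- It\^o's formula, Young's inequality, the moment bound \eqref{Ybound}, and Gronwall --- are routine, and the hypothesis $T\ge 1$ is exactly what permits the clean constant-free conclusion.
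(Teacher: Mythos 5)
Your proposal is correct and follows essentially the same route as the paper's own proof: It\^o's formula applied to $|U(t)|^2$ via the representation \eqref{Utsimple}, Young's inequality on the cross terms, the bound $\mbf E|\mbf H_i(Y(s))|^2\le C\eta_1(1+\mbf E|Y(s)|^2)$ coming from the definition of $\eta_1$ together with the linear growth of the functions in $\mcal H$ (with the $\eta_1$-free commutator term supplying the additive $1$), the moment bound \eqref{Ybound}, and Gronwall, with $T\ge 1$ used to absorb constants into the exponential. The only difference is that you spell out the $\eta_1$-bookkeeping and the justification of It\^o's formula in more detail than the paper does.
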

\begin{proof}
In this proof, we denote by $L$ a generic constant independent of $T$, which may vary for each appearance.	Note that under Assumption \ref{assum2}, every function from $\mcal H$ grows at most linearly, and $\nabla \bar{f}$ and $\nabla g$ are bounded. The It\^o  formula  yields
	\begin{align*}
	&\;\mbf E|U(t)|^2=2\mbf E\int_0^t\LL U(s),\nabla \bar f(Y(s))U(s)\RR\ud s+2T\mbf E\int_0^t\LL U(s),\mbf H_1(Y(s))\ud s\\
	&\;+\mbf E\int_0^t\big|\nabla g(Y(s))U(s)+T\mbf H_2(Y(s))|^2\ud s+\frac{T^2}{12}\mbf E\int_0^t|\nabla f(Y(s))g(Y(s))-\nabla g(Y(s))f(Y(s))|^2\ud s\\
	&\;+\frac{T^2}{6}\mbf E\int_0^t|\mbf H_3(Y(s))|^2\ud s\\
	&\le L\int_0^t\mbf E|U(s)|^2\ud s+LT^2\sum_{i=1}^3\int_0^t\mbf E|\mbf H_i(Y(s))|^2\ud s+LT^2\int_0^t(1+\mbf E|Y(s)|^2)\ud s.
	\end{align*}
By the definitions of $\eta_1$ (see \eqref{eta1}) and $\mbf H_i$, $i=1,2,3$, we have
\begin{align}
	\mbf E|U(t)|^2\le&\; L\int_0^t\mbf E|U(s)|^2\ud s+LT^2\int_0^t(1+\eta_1)(1+\mbf E|Y(s)|^2)\ud s. \label{sec4eq6}
\end{align} 	
Applying \eqref{Ybound}, one has $\sup_{s\in[0,T]}\mbf E|Y(s)|^2\le Le^{LT}$. This, combined with \eqref{sec4eq6} and the Gronwall inequality, yields the desired result.
\end{proof}
Next, we show that the SRK method \eqref{SRK1} is of weak order $2$ provided $\eta_1=0$.
\begin{lem}\label{weakorder}
Let Assumptions \ref{assum2} hold. Assume that $f\in\mbf C^6(\mbb R^d)$ and $g\in\mbf C^7(\mbb R^d)$ with $\mcal D^6f,\,\mcal D^7g\in\mbf F$. If $\eta_1=0$, then the SRK method \eqref{SRK1} is of weak order $2$, i.e., for any $\varphi\in\mbf C^6(\mbb R^d)$ with $\mcal D^6\varphi\in\mbf F$, 
\begin{align*}
	\sup_{n=0,1,\ldots,N}\big|\mbf E\varphi(Y^{SRK}_n)-\mbf E\varphi(Y(t_n))\big|\le Kh^2.
\end{align*}
\end{lem}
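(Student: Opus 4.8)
The plan is to reduce the global weak-order-$2$ estimate to a one-step estimate and then verify the latter by matching the one-step means of $Y^{SRK}_{t,y}(t+h)$ and of the exact map $Y_{t,y}(t+h)$ to order $h^2$. First I would invoke the fundamental theorem on the convergence of weak approximations (\cite[Chapter 2]{Milsteinbook}): under the moment bounds already furnished by Lemma \ref{SRK1bound} and \eqref{Ybound} and the smoothness hypotheses on $f,g,\varphi$, it suffices to prove the one-step weak estimate
\[
\big|\mbf E\varphi(Y^{SRK}_{t,y}(t+h))-\mbf E\varphi(Y_{t,y}(t+h))\big|\le K(1+|y|^{\iota})h^{3}
\]
for all $y\in\mbb R^d$ and $t\in[0,T-h]$, with $K,\iota$ independent of $h$.

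Next I would expand both one-step means to order $h^2$ with an $O(h^3)$ remainder. For the exact solution, the standard Kolmogorov/It\^o expansion gives $\mbf E\varphi(Y_{t,y}(t+h))=\varphi(y)+hL\varphi(y)+\tfrac{h^2}{2}L^2\varphi(y)+O(h^3)$, where $L$ is the generator of \eqref{mulSDE2} introduced in the proof of Theorem \ref{SRK1converge}; the assumptions $f\in\mbf C^6$, $g\in\mbf C^7$ (with $\mcal D^6f,\mcal D^7g\in\mbf F$) and $\mcal D^6\varphi\in\mbf F$ guarantee that $L^2\varphi$ is well defined and of polynomial growth, so the remainder is polynomially bounded in $y$.

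For the SRK map I would start from the expansion \eqref{YSRKexpansion} of $\delta:=Y^{SRK}_{t,y}(t+h)-y$ obtained in Lemma \ref{YSRKexpan}, first replacing $\DW$ by $\Delta W$: the induced error is negligible in the weak sense since $|\mbf E(\DW^m-\Delta W^m)|\le Kh^{m/2+\kappa-\varepsilon}$ by \eqref{zeta3} and the products are controlled through \eqref{zeta2}, so with $\kappa\ge3$ these corrections are $O(h^{3})$ exactly as in Lemma \ref{YSRK-Ybar}. I would then Taylor-expand $\varphi(y+\delta)$ to fifth order with sixth-order remainder (this is where $\mcal D^6\varphi\in\mbf F$ enters), take expectations, and evaluate the Gaussian moments $\mbf E\Delta W^{2k}=(2k-1)!!\,h^{k}$, $\mbf E\Delta W^{2k+1}=0$. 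Collecting by powers of $h$ and discarding the $O(h^3)$ tail yields $\mbf E\varphi(Y^{SRK}_{t,y}(t+h))=\varphi(y)+hL\varphi(y)+\tfrac{h^2}{2}\widehat L\varphi(y)+O(h^3)$, where $\widehat L$ is a differential operator (involving derivatives of $\varphi$ up to order four contracted with products of $f,g$ and their derivatives) whose coefficients are the polynomial combinations $\alpha^\top(Ae)$, $\alpha^\top(Be)$, $\beta^\top(Ae)$, $\beta^\top(B(Be))$, $\beta^\top(Be)^2,\dots$ of the Butcher arrays that already appear in $\mbf H_1,\mbf H_2,\mbf H_3$.

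Finally I would match the two expansions. The order-$h$ term already coincides with $L\varphi$ thanks to $\alpha^\top e=\beta^\top e=1$ and $\beta^\top(Be)=\tfrac12$ (Assumption \ref{assum2}). The key point is that the discrepancy $\widehat L\varphi-L^2\varphi$ is a linear combination of precisely the fourteen coefficient defects $\alpha^\top(Ae)-\tfrac12,\ \beta^\top(A(Be)),\ \beta^\top(B(Be))-\tfrac16,\ \beta^\top(Be)^2-\tfrac13,\dots$ whose squares constitute $\eta_1$ in \eqref{eta1}; hence $\eta_1=0$ forces $\widehat L=L^2$, the $h^2$ terms cancel, and the one-step weak error is $O(h^3)$. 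The main obstacle is purely computational: one must carry the expansion of $\varphi(y+\delta)$ through every monomial in $\Delta W$ and $h$ surviving to order $h^2$ after taking expectations, organize the resulting fourth-order differential expressions, and check term by term that each coefficient of $\widehat L$ equals the corresponding coefficient of $\tfrac12L^2\varphi$. The conceptual content is only that this bookkeeping collapses exactly onto the defects recorded in $\eta_1$, so that their vanishing is sufficient for second-order weak convergence.
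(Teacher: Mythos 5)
Your proposal reaches the right conclusion but by a genuinely different route than the paper. The paper does not re-derive the order conditions: it introduces the auxiliary chain $\{P_n\}_{n=0}^N$ obtained from \eqref{SRK1} by replacing the truncated increments $\DW_n$ with $\Delta W_n$, cites R\"ossler's weak-order-two theorem for that method (whose order conditions (1)--(17) are exactly $\eta_1=0$ together with $\alpha^\top e=\beta^\top e=1$, $\beta^\top(Be)=\tfrac{1}{2}$) to obtain $\sup_n|\mbf E\varphi(P_n)-\mbf E\varphi(Y(t_n))|\le Kh^2$, and then reduces the lemma to the one-step comparison $|\mbf E\varphi(Y^{SRK}_{t,y}(t+h))-\mbf E\varphi(P_{t,y}(t+h))|\le K(1+|y|^{\iota_1})h^3$, which it proves exactly by your truncation argument: $|\mbf E(\DW^2-\Delta W^2)|+|\mbf E(\DW^4-\Delta W^4)|\le Kh^3$ via \eqref{zeta3} and $\kappa\ge 3$, plus odd symmetry of the half-integer-order terms. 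Your plan instead verifies the generator matching $\widehat L=L^2$ by hand against the Kolmogorov expansion of the exact flow; this is self-contained (it amounts to re-proving R\"ossler's sufficiency from scratch) but carries the full combinatorial burden that the citation was designed to avoid. Your identification of the fourteen coefficient defects with the entries of $\eta_1$ in \eqref{eta1} is the correct bookkeeping target, and your treatment of the truncation via \eqref{zeta2}--\eqref{zeta3} coincides with the paper's.

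There is, however, one step in your plan that fails as written. You build $\mbf E\varphi(Y^{SRK}_{t,y}(t+h))=\varphi(y)+hL\varphi(y)+\tfrac{h^2}{2}\widehat L\varphi(y)+O(h^3)$ directly from the expansion \eqref{YSRKexpansion}; but the remainder there satisfies only $\|R^2_{Y^{SRK}}\|_{\mbf L^p(\Omega)}\le K(p)(1+|y|^{\alpha'})h^{5/2}$, so the contribution $\mbf E\,\mcal D\varphi(y)(R^2_{Y^{SRK}})$ is controlled only by $K(1+|y|^{\iota})h^{5/2}$. The ``tail'' you propose to discard is therefore $O(h^{5/2})$, not $O(h^3)$, and the argument as stated yields only a one-step weak error of order $h^{5/2}$, i.e.\ global weak order $3/2$. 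The cure is to push the expansion of Lemma \ref{YSRKexpan} one more half-order: under the strengthened smoothness $f\in\mbf C^6(\mbb R^d)$, $g\in\mbf C^7(\mbb R^d)$ the additional explicit terms are of the form $h^a\DW^b\,\Phi(y)$ with $a+b/2=\tfrac{5}{2}$, hence $b$ odd, and their expectations vanish by the symmetry of the distribution of $\DW$, leaving a remainder whose expectation is genuinely $O(h^3)$. This is precisely the symmetry device the paper invokes (``the expectations of those terms with strong order $0.5$, $1.5$ and $2.5$ will vanish''), and without it your expansion-matching step cannot close.
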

\begin{proof}
	In this proof, $M=\mcal O(h^k)$, $k>0$, means that for any $p\ge 1$, $\|M\|_{\mbf L^p(\Omega)}\le K(p)(1+y^\iota)h^k$,  $\iota\ge 1$, i.e., $M$ is of strong order $k$ in terms of $h$.
Consider the following $s_0$-stage SRK for \eqref{mulSDE1}. 
\begin{align*}
	\begin{cases}
		Z_{n,i}=P_n+h\sum\limits_{j=1}^{s_0}a_{ij}f(Z_{n,j})+\Delta W_n\sum\limits_{j=1}^{s_0}b_{ij}g(Z_{n,j}),~i=1,\ldots,s_0,\\
		P_{n+1}=P_{n}+h\sum\limits_{i=1}^{s_0}\alpha_if(Z_{n,i})+\Delta W_n\sum\limits_{i=1}^{s_0}\beta_ig(Z_{n,i}),~n=0,\ldots,N-1,
	\end{cases}
\end{align*}
with $P_0=Y_0$. In \cite{Rossler}, for the scalar noise case, it has been shown that $\{P_n\}_{n=0}^N$ converges to $Y$ if conditions \cite[(1)-(17)]{Rossler} holds. Note that  \cite[(1)-(17)]{Rossler} is equivalent to $\eta_1=0$, $\alpha^\top e=\beta^\top e=1$, and  $\beta^\top (Be)=\frac{1}{2}$. Thus, under the assumptions of the theorem, 
\begin{align}
	\sup_{n=0,1,\ldots,N}\big|\mbf E\varphi(P_n)-\mbf E\varphi(Y(t_n))\big|\le Kh^2.\label{Ptyweak}
\end{align}

By revisiting the proof of \eqref{YSRKexpansion}, the one-step approximation generating $\{P_n\}_{n=0}^N$ is 
\begin{align}
	P_{t,y}(t+h)=&\;y+\Delta W g(y)+hf(y)+\frac{1}{2}\Delta W^2\nabla g(y)g(y)+\Delta W hF_1(y)+\Delta W^3 F_2(y)\notag\\
&\;+h^2\alpha^\top(Ae)\nabla f(y)f(y)+\Delta W^2hF_3(y)+\Delta W^4F_4(y)+\mcal O(h^{2.5}).\label{Pty}
\end{align}
Denote $\bar \Delta=P_{t,y}(t+h)-y$. Then for any $\varphi\in\mbf C^6(\mbb R^d)$ with $\mcal D^6\varphi\in\mbf F$, the Taylor formula gives
\begin{align}
	\varphi(P_{t,y}(t+h))=&\;\varphi(y)+\sum_{i=1}^{d}\frac{\PD\varphi(y)}{\partial y_i}\bar \Delta^i+\frac{1}{2}\sum_{i_1,i_2=1}^{d}\frac{\PD^2\varphi(y)}{\partial y_{i_1}\partial y_{i_2}}\bar \Delta^{i_1}\bar \Delta^{i_2}\nonumber\\
	&\;+\cdots+\frac{1}{5!}\sum_{i_1,\ldots,i_5=1}^{d}\frac{\PD^5\varphi(y)}{\partial y_{i_1}\partial y_{i_2}\cdots\partial y_{i_5}}\bar \Delta^{i_1}\bar \Delta^{i_2}\cdots\bar \Delta^{i_5}+\mcal O(h^3),\label{sec4eq7}
\end{align}
where $\bar \Delta^i$ is the $i$th component of  $\bar \Delta$. Denote $\bar {\bar \Delta}=Y^{SRK}_{t,y}(t+h)-y$. Then 
\begin{align*}
\bar {\bar \Delta}=&\;\DW g(y)+hf(y)+\frac{1}{2}\DW^2\nabla g(y)g(y)+\DW hF_1(y)+\DW^3 F_2(y)\notag\\
&\;+h^2\alpha^\top(Ae)\nabla f(y)f(y)+\DW^2hF_3(y)+\DW^4F_4(y)+\mcal O(h^{2.5})
\end{align*}
due to \eqref{YSRKexpansion}.
Then $\varphi(Y^{SRK}_{t,y}(t+h))$ also has a similar expansion as \eqref{sec4eq7}:
\begin{align}
	\varphi(Y^{SRK}_{t,y}(t+h))=&\;\varphi(y)+\sum_{i=1}^{d}\frac{\PD\varphi(y)}{\partial y_i}\bar {\bar \Delta}^i+\frac{1}{2}\sum_{i_1,i_2=1}^{d}\frac{\PD^2\varphi(y)}{\partial y_{i_1}\partial y_{i_2}}\bar {\bar \Delta}^{i_1}\bar {\bar \Delta}^{i_2}\nonumber\\
	&\;+\cdots+\frac{1}{5!}\sum_{i_1,\ldots,i_5=1}^{d}\frac{\PD^5\varphi(y)}{\partial y_{i_1}\partial y_{i_2}\cdots\partial y_{i_5}}\bar {\bar \Delta}^{i_1}\bar {\bar \Delta}^{i_2}\cdots\bar {\bar \Delta}^{i_5}+\mcal O(h^3).\label{sec4eq8}
\end{align}
 
In order to show that $\{Y^{SRK}_n\}_{n=0}^N$ is of weak order $2$, it suffices to prove that 
$|\mbf E\varphi(Y^{SRK}_{t,y}(t+h))-\mbf E\varphi(P_{t,y}(t+h))|\le K(1+|y|^{\iota_1})h^3$ for some $\iota_1\ge 1$ in view of \eqref{Ptyweak}. Thus the proof is complete once we show that there is some $K(y)\in\mbf F$ such that for $k=1,\dots,5$, 
\begin{align}
\mbf E\big(\prod_{j=1}^{k}\bar{\Delta}^{i_j}-\prod_{j=1}^{k}\bar{\bar \Delta}^{i_j}\big)\le K(y)h^3,~i_j\in\{1,\ldots,d\},	\label{sec4eq9}
\end{align}
due to \eqref{sec4eq7} and \eqref{sec4eq8}.

Note that among all terms in the final expansions of  $\prod_{j=1}^{k}\bar{\Delta}^{i_j}$ and $\prod_{j=1}^{k}\bar{\bar \Delta}^{i_j}$,  the expectations of those terms with strong order $0.5$, $1.5$ and $2.5$  will vanish due to symmetry of distributions of $\DW$ and $\Delta W$.  
Thus, \eqref{sec4eq9} holds if the expectations of those terms of strong $1$ and $2$ in the expansions of  $\prod_{j=1}^{k}\bar{\Delta}^{i_j}$ is $3$th order approximation for corresponding terms in  the expansions of  $\prod_{j=1}^{k}\bar{\bar \Delta}^{i_j}$, which is further implied by
\begin{align}
	|\mbf E\big(\DW^2-\Delta W^2)|+|\mbf E\big(\DW^4-\Delta W^4)|\le Kh^3. \label{sec4eq10}
\end{align}
Using \eqref{zeta3} and $\kappa\ge 3$, we finally obtain \eqref{sec4eq10} and thus finish the proof.
\end{proof}

Combining Theorems \ref{maintheorem1}, Lemma \ref{weakorder}, and \eqref{Utsimple}, we obtain the following result.
\begin{theo}\label{maintheorem2}
Let the assumptions of Lemma \ref{weakorder} hold. If $\eta_1=0$, then the SRK method \eqref{SRK1} is of weak order $2$, and the limit distribution of  $N(Y^{SRK}_N-Y(T))$ as $N\to\infty$ takes the following unified form
\begin{align*}
	U(T)=&\;\int_0^T\nabla \bar f(Y(s))U(s)\ud s
	+\int_0^T\nabla g(Y(s))U(s)\ud W(s)\\
	&\;+\frac{T}{\sqrt{12}}\int_0^T\big[\nabla f(Y(s))g(Y(s))-\nabla g(Y(s))f(Y(s))\big]\ud \widetilde{W}_1(s).
\end{align*} 
\end{theo}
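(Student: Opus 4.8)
The plan is to combine the ingredients that are already in place. For the weak-order assertion I would invoke Lemma~\ref{weakorder} directly: under the stated regularity on $f,g,\varphi$ together with Assumption~\ref{assum2}, the hypothesis $\eta_1=0$ yields at once that the SRK method~\eqref{SRK1} is of weak order $2$, so this half of the statement requires no additional work.

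For the limit distribution I would start from Theorem~\ref{maintheorem1}, which gives $N(Y^{SRK}_N-Y(T))\overset{d}{\Longrightarrow}U(T)$ with $U$ the strong solution of~\eqref{Ut}, written equivalently in the condensed form~\eqref{Utsimple}. The crucial observation is that $\eta_1$, as defined in~\eqref{eta1}, is a sum of squares, so $\eta_1=0$ forces every individual summand to vanish. Comparing this list of scalar quantities with the coefficients appearing in $\mbf H_1,\mbf H_2,\mbf H_3$ (displayed just before~\eqref{eta1}), I would verify that each scalar coefficient entering $\mbf H_1,\mbf H_2,\mbf H_3$ is exactly one of the quantities whose square occurs in~\eqref{eta1}---for instance $\alpha^\top(Ae)-\frac12$, $\beta^\top(A(Be))$, $\beta^\top(B(Be))-\frac16$ and $\beta^\top(Be)^2-\frac13$, the last two being shared between $\mbf H_2$ and $\mbf H_3$. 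Hence $\eta_1=0$ is equivalent to $\mbf H_1\equiv\mbf H_2\equiv\mbf H_3\equiv 0$ on $\mbb R^d$.

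Substituting these vanishing identities into~\eqref{Utsimple} annihilates the three integrals carrying the factors $\mbf H_1,\mbf H_2,\mbf H_3$, and what remains is precisely the displayed unified form of $U(T)$: the linear drift term $\int_0^T\nabla\bar f(Y(s))U(s)\ud s$, the multiplicative It\^o term $\int_0^T\nabla g(Y(s))U(s)\ud W(s)$, and the single independent-noise contribution $\frac{T}{\sqrt{12}}\int_0^T\big[\nabla f(Y(s))g(Y(s))-\nabla g(Y(s))f(Y(s))\big]\ud\widetilde{W}_1(s)$.

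Since both assertions reduce to invoking a prior result and then a term-by-term matching, I do not expect any genuine obstacle. The only point demanding care is confirming the exact correspondence between the fourteen squared quantities constituting $\eta_1$ and the coefficients occurring in $\mbf H_1,\mbf H_2,\mbf H_3$ (ten in $\mbf H_1$ and four in $\mbf H_2$, with the two coefficients of $\mbf H_3$ coinciding with two of those in $\mbf H_2$). This is a routine but slightly bookkeeping-heavy inspection of the definitions, and it is exactly what makes $\eta_1=0$ the precise algebraic trigger for the collapse of~\eqref{Utsimple} to the unified form.
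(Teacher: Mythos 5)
Your proposal is correct and follows essentially the same route as the paper, which obtains Theorem \ref{maintheorem2} precisely by combining Lemma \ref{weakorder} (for the weak order $2$ claim) with Theorem \ref{maintheorem1} and the condensed representation \eqref{Utsimple}, noting that $\eta_1=0$ forces every squared summand in \eqref{eta1}---and hence every coefficient in $\mbf H_1,\mbf H_2,\mbf H_3$---to vanish. Your coefficient bookkeeping (ten quantities in $\mbf H_1$, four in $\mbf H_2$, the two in $\mbf H_3$ shared with $\mbf H_2$, totaling the fourteen squares in \eqref{eta1}) is exactly the verification the paper's one-line derivation relies on.
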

\begin{rem}\label{inf1}
Since $N(Y_N^{SRK}-Y(T))\overset{d}{\Longrightarrow}U(T)$, $\mbf E|Y_N^{SRK}-Y(T)|^2\approx \frac{1}{N^2}\mbf E|U(T)|^2$ for $N\gg 1$. Thus, to some extent, Theorems \ref{maintheorem1} and  \ref{upperbound}  indicate that $\eta_1$ is the key parameter reflecting the growth rate of mean-square error of the SRK method \eqref{SRK1}. In addition, we obtain the following inferences: (1) For the  SRK methods \eqref{SRK1} of strong order $1$, the smaller $\eta_1$ is,   the smaller the mean-square error is  after a long time. (2)  Among the SRK methods \eqref{SRK1} of strong order $1$, those of weak order $2$, i.e., $\eta_1=0$, have the smallest  mean-square errors after a long time.      
\end{rem}

\section{Asymptotic error distribution for additive noise}\label{Sec5}
In this section, we investigate the asymptotic error distribution of a class of SRK  methods applied to SDEs with additive noise by following the procedure of Framework \ref{frame}. The study route and proof ideas in establishing  the asymptotic error distribution  are similar to the case of multiplicative noise. Thus, we focus on stating our results and omit most unnecessary proof details.

\begin{align}\label{SDEadd}
	\begin{cases}
		\ud X(t)=f(X(t))\ud t+\sigma\ud \mbf W(t),~t\in(0,T], \\
		X(0)=X_0\in\mbb R^d,
	\end{cases}
\end{align}
where $f:\mbb R^d\to\mbb R^d$ is Lipschitz continuous, $X_0$ is non-random,  and $\sigma=(\sigma_1,\ldots,\sigma_m)\in\mbb R^{d\times m}$ is a constant matrix.

Consider the following SRK method:
\begin{align}\label{SRKadd}
	\begin{cases}
		Z_{n,i}=X^{SRK}_n+h\sum\limits_{j=1}^{s_0}\bar a_{ij}f(Z_{n,j})+\bar b_i\sigma\Delta\mbf  W_n,~i=1,\ldots,s_0, \\
		X^{SRK}_{n+1}=X_n^{SRK}+h\sum\limits_{i=1}^{s_0}\bar \alpha_if(Z_{n,i})+\sigma\Delta\mbf  W_n,~n=0,\ldots,N-1,
	\end{cases}
\end{align}
where $\bar a_{ij}$, $\bar b_i$ and $\bar \alpha_i$ are given constants, and $\Delta\mbf W_n=\mbf W(t_{n+1})-\mbf W(t_n)$. The coefficients of \eqref{SRKadd} can be represented by the Butcher tableau of the form ~
\renewcommand{\arraystretch}{1.5}
\begin{tabular}{|c| c}
	\( \bar A \) & \( \bar b \) \\
	\hline
	\(\bar \alpha^\top\) &   \\
\end{tabular},
where $ \bar A=(\bar a _{ij})$, $\bar b=( \bar b_1,\ldots,\bar b_{s_0})^\top$ and $\bar \alpha=( \bar\alpha_1,\ldots,\bar\alpha_{s_0})^\top$.
The one-step approximation for \eqref{SRKadd} is
\begin{align*}
	X^{SRK}_{t,x}(t+h)=x+h\sum_{i=1}^{s_0}\bar \alpha_if(Z_i)+\sigma\Delta\mbf W,
\end{align*}
where $\Delta\mbf W=\mbf W(t+h)-\mbf W(t)$ and $Z=(Z_1^\top,\ldots,Z_{s_0}^\top)^\top$ is determined by the following equation 
\begin{align}
Z=(e\otimes I_d)x+h(\bar A\otimes I_d)F(Z)+(\bar b\otimes I_d)\sigma\Delta\mbf W \label{sec5eq1}
\end{align}
with $F(Z)=(f(Z_1)^\top,\ldots,f(Z_{s_0})^\top)^\top$.

\begin{assum}\label{assum3}
	Assume that $f\in\mbf C^4_b(\mbb R^d)$ and $\bar \alpha^\top e=1$. 
\end{assum}
Under the above assumption, we can  establish the solvability and convergence of \eqref{SRKadd}.
\begin{lem}
	Let Assumption \ref{assum3} hold. Then there is $h'_1>0$ such that for any $h\in(0,h'_1]$, $x\in\mbb R^d$, $\omega\in\Omega$, \eqref{sec5eq1} is uniquely solvable. Moreover, there is $C'_0>0$ such that 
	\begin{align*}
		|Z_i-x|\le C'_0(1+|x|)h+C'_0|\Delta \mbf W|,~h\in(0,h_1).
	\end{align*}
\end{lem}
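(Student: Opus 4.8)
The plan is to follow the proof of Lemma~\ref{solvability} almost verbatim, applying the Banach contraction mapping principle to the map $\Phi_x(Z):=(e\otimes I_d)x+h(\bar A\otimes I_d)F(Z)+(\bar b\otimes I_d)\sigma\Delta\mbf W$, whose fixed points are exactly the solutions of \eqref{sec5eq1}. Under Assumption~\ref{assum3}, $f\in\mbf C_b^4(\mbb R^d)$ has bounded first derivative, hence is globally Lipschitz and grows at most linearly, so $F$ inherits both properties. The crucial structural simplification over the multiplicative case is that the stochastic term $(\bar b\otimes I_d)\sigma\Delta\mbf W$ does not depend on $Z$; consequently the Lipschitz estimate $|\Phi_x(Z^1)-\Phi_x(Z^2)|\le h\,\|\bar A\otimes I_d\|\,\mrm{Lip}(F)\,|Z^1-Z^2|=:K_1h\,|Z^1-Z^2|$ carries a contraction factor depending only on $h$, never on $\omega$. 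Choosing $h_1'$ so that $K_1h_1'<1$ makes $\Phi_x$ a contraction for every $\omega\in\Omega$ and every $x\in\mbb R^d$, and the contraction mapping principle yields the unique solution $Z^*=\lim_{n\to\infty}Z^n$, where $Z^{n+1}=\Phi_x(Z^n)$ and $Z^0=(e\otimes I_d)x$.

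For the quantitative bound I would first estimate, using the linear growth of $F$, $|\Phi_x(Z)-(e\otimes I_d)x|\le Kh(1+|Z|)+K|\Delta\mbf W|$, and then split $|Z|\le|Z-(e\otimes I_d)x|+\sqrt{s_0}\,|x|$ to obtain $|\Phi_x(Z)-(e\otimes I_d)x|\le Kh\,|Z-(e\otimes I_d)x|+K(1+|x|)h+K|\Delta\mbf W|$. Shrinking $h_1'$ if necessary so that $Kh\le\tfrac12$, the closed ball $\{Z:|Z-(e\otimes I_d)x|\le 2K(1+|x|)h+2K|\Delta\mbf W|\}$ is mapped into itself by $\Phi_x$; since $Z^0=(e\otimes I_d)x$ lies in this ball, an induction shows every iterate $Z^n$ stays in it, and passing to the limit gives $|Z^*-(e\otimes I_d)x|\le 2K(1+|x|)h+2K|\Delta\mbf W|$. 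As $|Z_i-x|\le|Z^*-(e\otimes I_d)x|$, this is the asserted estimate with $C_0':=2K$.

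There is essentially no analytic obstacle here: the argument is routine once one observes the additive structure. The only point worth emphasizing is precisely that structural observation — because the noise does not multiply a state-dependent coefficient, the contraction constant is free of $\Delta\mbf W$, so the truncated increment $\DW$ needed in the multiplicative setting (to keep $K_1(h+|\DW|)<1$ uniformly in $\omega$) is superfluous, and $h_1'$ may be chosen uniformly in $\omega$ without any truncation. Apart from this, one need only keep track of the at-most-linear growth of $f$ to produce the stated $(1+|x|)h+|\Delta\mbf W|$ form of the bound.
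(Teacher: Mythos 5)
Your proof is correct and follows essentially the same route the paper intends: it is the direct adaptation of the contraction-mapping argument used for Lemma \ref{solvability} (invariant ball around $(e\otimes I_d)x$, induction on the Picard iterates, passage to the limit), which is precisely the proof the paper omits as ``similar to the multiplicative case.'' Your structural observation is also the right one — since the noise term $(\bar b\otimes I_d)\sigma\Delta\mbf W$ is independent of $Z$, the contraction factor $K_1h$ is deterministic, which is exactly why the additive scheme \eqref{SRKadd} can use untruncated increments and $h_1'$ can be chosen uniformly in $\omega$.
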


\begin{lem}\label{SRKaddconverge}
	Let Assumptions \ref{assum3} hold. Then we have the following.
	\begin{itemize}
	\item [(1)] For any $p\ge 1$, $\sup\limits_{n=0,1,\ldots,N}\mbf E|X^{SRK_n}|^p\le K(p,T)$.
	\item [(2)] The SRK method is of strong order $1$, $i.e.,$ for any $p\ge 1$,
	\begin{align*}
	\sup_{n=0,1,\ldots,N}\|X^{SRK}_n-X(t_n)\|_{\mbf L^p(\Omega)}\le K(p,T)h.
	\end{align*}
	
 \item[(3)] If additionally assume that $\bar \alpha^\top(\bar Ae)=\bar \alpha^\top \bar b^2=\bar\alpha^\top \bar b=\frac{1}{2}$,   
 then the SRK method \eqref{SRK1} is of weak order $2$, i.e., for any $\varphi\in\mbf C^6(\mbb R^d)$ with $\mcal D^6\varphi\in\mbf F$, 
 \begin{align*}
 	\sup_{n=0,1,\ldots,N}\big|\mbf E\varphi(X^{SRK}_n)-\mbf E\varphi(X(t_n))\big|\le Kh^2.
 \end{align*}
	\end{itemize}
\end{lem}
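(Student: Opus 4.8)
The plan is to mirror the development of Section~\ref{Sec3}, exploiting the simplifications afforded by the additive noise: since $\sigma$ is constant, $\nabla\sigma=0$, so the Itô--Stratonovich correction vanishes and the drift of \eqref{SDEadd} is simply $f$, and no truncation of the Brownian increment is needed because the implicitness sits entirely in the drift. For part~(1), I would first record the one-step estimate $|\mbf E(X^{SRK}_{t,x}(t+h)-x)|\le K(1+|x|)h$. This follows from $X^{SRK}_{t,x}(t+h)-x=h\sum_i\bar\alpha_if(Z_i)+\sigma\Delta\mbf W$: the stochastic term has mean zero, while the drift term is bounded by $Kh\sum_i(1+\mbf E|Z_i|)\le K(1+|x|)h$ using the preceding solvability lemma and the linear growth of $f$ (guaranteed by $f\in\mbf C_b^4$). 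Together with $|X^{SRK}_{t,x}(t+h)-x|\le K(1+|x|)(h+|\Delta\mbf W|)$, the moment bound then follows from \cite[Page~102, Lemma~2.2]{Milsteinbook} and the H\"older inequality, exactly as in Lemma~\ref{SRK1bound}.

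For part~(2), the strategy is to invoke Theorem~\ref{fundamental} comparing $X^{SRK}_{t,x}(t+h)$ directly with the exact one-step solution, with $p_1=2$ and $p_2=\frac32$, so that $p_2-\frac12=1$ yields the claimed strong order. I would Taylor-expand the stage equation \eqref{sec5eq1} to get $Z_i=x+\bar b_i\sigma\Delta\mbf W+\mcal O(h)$, then expand $f(Z_i)$, so that using $\bar\alpha^\top e=1$ one obtains $X^{SRK}_{t,x}(t+h)=x+hf(x)+\sigma\Delta\mbf W+R$, where $R$ collects terms of strong order $\frac32$ with mean $\mcal O(h^2)$; the surviving mixed term $\bar\alpha^\top\bar b\,h\,\nabla f(x)\sigma\Delta\mbf W$ has mean zero. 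On the other hand, the Itô expansion of the exact solution gives $X_{t,x}(t+h)=x+hf(x)+\sigma\Delta\mbf W+\int_t^{t+h}(t+h-r)\nabla f(x)\sigma\,\ud\mbf W(r)+\tilde R$ with $\tilde R$ of the same type. Subtracting, the leading terms cancel and the difference consists of mean-zero terms of strong order $\frac32$ plus mean-$\mcal O(h^2)$ remainders, which is precisely condition (A1). Conditions (A2) and (A3) are supplied by part~(1), \eqref{Ybound} and \cite[Lemma~2.2]{ZhangZQ13}, so Theorem~\ref{fundamental} applies.

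For part~(3), the plan is to follow Lemma~\ref{weakorder}: it suffices to prove the local weak error $|\mbf E\varphi(X^{SRK}_{t,x}(t+h))-\mbf E\varphi(X_{t,x}(t+h))|\le K(1+|x|^{\iota})h^3$, which integrates to global weak order $2$. Writing $\bar\Delta=X^{SRK}_{t,x}(t+h)-x$, I would Taylor-expand $\varphi$ up to fifth order and compare the moments $\mbf E[\bar\Delta^{i_1}\cdots\bar\Delta^{i_k}]$, $k=1,\dots,5$, against the corresponding moments of the exact increment. Because $\Delta\mbf W$ is symmetric, the odd-strong-order contributions vanish upon taking expectations, so the task reduces to matching the order-$h$ and order-$h^2$ moment functionals up to $\mcal O(h^3)$; the additional hypotheses $\bar\alpha^\top(\bar Ae)=\bar\alpha^\top\bar b^2=\bar\alpha^\top\bar b=\frac12$ are precisely the order conditions forcing this matching (equivalently, they certify that \eqref{SRKadd} meets the classical second-order weak conditions, cf.\ \cite{Rossler}).

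The main obstacle I anticipate is part~(3): the bookkeeping of which product moments $\mbf E[\bar\Delta^{i_1}\cdots\bar\Delta^{i_k}]$ survive, and the verification that the three stated scalar conditions are sufficient to eliminate every order-$h^2$ discrepancy between $X^{SRK}$ and the exact diffusion. By contrast, parts~(1)--(2) are routine once the additive structure is used, since the absence of $\nabla g\,g$ terms and of truncated increments removes the most delicate estimates of the multiplicative case.
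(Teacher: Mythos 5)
Your proposal is correct and takes essentially the same route as the paper: part (1) mirrors Lemma \ref{SRK1bound} via the one-step mean and moment bounds plus the Milstein moment lemma, part (2) uses the bootstrap Taylor expansion of the stage equations and the It\^o expansion of the exact solution to verify (A1) with $p_1=2$, $p_2=\tfrac{3}{2}$ and then applies Theorem \ref{fundamental}, and part (3) reduces weak order $2$ to matching the product moments $\mbf E[\bar\Delta^{i_1}\cdots\bar\Delta^{i_k}]$, $k=1,\ldots,5$, of the numerical and exact increments up to $\mcal O(h^3)$, exactly as in the paper's adaptation of Lemma \ref{weakorder} (the paper makes this concrete by writing out the two increment expansions, whose order-$h$ and order-$h^2$ terms are matched precisely by the three stated conditions on $\bar A$, $\bar\alpha$, $\bar b$). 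No gaps.
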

\begin{proof}
	The proof of (1) is similar to that of Lemma \eqref{SRK1bound} and thus is omitted.
	
	The proof of (2) is similar to that of Theorem \ref{SRK1converge}, and we give the sketch of proof.	Following the bootstrap argument in the proof of Theorem \ref{SRK1converge}, we have
\begin{align*}
	X^{SRK}_{t,x}=x+\sigma\Delta\mbf W+hf(x)+R_{X^{SRK}}
\end{align*}
with $|\mbf ER_{X^{SRK}}|\le K(1+|x|^2)h^2$ and $\|R_{X^{SRK}}\|_{\mbf L^p(\Omega)}\le K(1+|x|^2)h^{1.5}$.	In addition, by the It\^o Taylor expansion, one has
\begin{align*}
	X_{t,x}(t+h)=x+\sigma\Delta\mbf W+hf(x)+ R_{X}
\end{align*}
with $|\mbf ER_{X}|\le K(1+|x|^2)h^2$ and $\|R_{X}\|_{\mbf L^p(\Omega)}\le K(1+|x|^2)h^{1.5}$.
Then the second conclusion follows by applying Theorem \ref{fundamental}.

Following the argument for proving $|\mbf E\varphi(Y^{SRK}_{t,y}(t+h))-\mbf E\varphi(P_{t,y}(t+h))|\le K(1+|y|^{\iota_1})h^3$ in Lemma \ref{weakorder} (or using \cite[Page 100,Theorem 2.1]{Milsteinbook}), the third conclusion holds provided that for $k=1,\ldots,5$,
	\begin{align}
		\mbf E\big(\prod_{j=1}^{k}\Delta_1^{i_j}-\prod_{j=1}^{k}\Delta_2^{i_j}\big)\le K(x)h^3,~i_j\in\{1,\ldots,d\},	\label{sec5eq2}
	\end{align}
	with $\Delta_1=X_{t,x}(t+h)-x$, $\Delta_2=X^{SRK}_{t,x}(t+h)-x$, and $K(x)\in\mbf F$. 
A direct computation yields
\begin{align*}
\Delta_1=&\;\sum_{k=1}^{m}\sigma_k\Delta W_k+f(x)h+\sum_{k=1}^{m}\nabla f(x)\sigma_k\int_t^{t+h}\int_t^s\ud W_k(r)\ud s+\frac{1}{2}h^2\big[\nabla f(x)f(x)\\
&\;+\frac{1}{2}\sum_{k=1}^{m}\mcal D^2f(x)(\sigma_k,\sigma_k)\big]+\sum_{k,l=1}^m\mcal D^2f(x)(\sigma_k,\sigma_l)\int_t^{t+h}\int_t^s\int_t^r\ud W_l(u)\ud W_k(r)\ud s+\tilde R_{X},
\end{align*}	
and
\begin{align}
\Delta_2=&\;\sum_{k=1}^{m}\sigma_k\Delta W_k+f(x)h+(\bar{\alpha}^\top \bar b)h\sum_{k=1}^{m}\nabla f(x)\sigma_k\Delta W_k+(\bar{\alpha}^\top \bar b^2)\frac{h}{2}\sum_{k,l=1}^m\mcal D^2f(x)(\sigma_k,\sigma_l)\Delta W_k\Delta W_l \notag\\
&\;+(\bar{\alpha}^\top (\bar Ae))h^2\nabla f(x)f(x)+\tilde R_{X^{SRK}} \label{XSRKexpan}
\end{align}
with $|\mbf E\tilde R_{X}|+|\mbf E\tilde R_{X^{SRK}}|\le K(1+|x|^4)h^3$ and $\|\tilde R_{X}\|_{\mbf L^p(\Omega)}+\|\tilde R_{X^{SRK}}\|_{\mbf L^p(\Omega)}\le K(1+|x|^3)h^{2.5}$.
Using the assumptions on $\bar A$, $\bar \alpha$ and $\bar{b}$, one can finally prove \eqref{sec5eq2}. 
\end{proof}

Next, we give the appurtenant numerical method for \eqref{SRKadd}. 
According to \eqref{XSRKexpan} and \eqref{DW2}, we can write $X^{SRK}_{t,x}(t+h)$ as
\begin{align*}
	X^{SRK}_{t,x}(t+h)=&\;x+\sigma\Delta\mbf W+ f(x)h+(\bar{\alpha}^\top \bar b)h\nabla f(x)\sigma\Delta\mbf W+(\bar{\alpha}^\top \bar b^2)\frac{h^2}{2}\sum_{k=1}^m\mcal D^2f(x)(\sigma_k,\sigma_k) \notag\\
	&\;+(\bar{\alpha}^\top (\bar Ae))h^2\nabla f(x)f(x)+\bar R_{X^{SRK}},
\end{align*}
where $\bar R_{X^{SRK}}=\tilde R_{X^{SRK}}+ (\bar{\alpha}^\top \bar b^2)\frac{h}{2}\sum\limits_{
	k\neq l}\mcal D^2f(x)(\sigma_k,\sigma_l)\Delta W_k\Delta W_l+(\bar{\alpha}^\top \bar b^2)h\sum_{k=1}^m\mcal D^2f(x)(\sigma_k,\sigma_k)\\
	\cdot\int_t^{t+h}\int_t^s\ud W_k(r)\ud W_k(s)$. Thus, we immediately obtain $|\mbf E\bar{R}_{X^{SRK}}|\le K(1+|x|^4)h^3$ and $\|\bar{R}_{X^{SRK}}\|_{\mbf L^p(\Omega)}\le K(1+|x|^3)h^{2}$ for $p\ge 1$.
Then define the one-step approximation \begin{align*}
\widetilde X_{t,x}(t+h)=&\;x+\sigma\Delta\mbf W+ f(x)h+(\bar{\alpha}^\top \bar b)h\nabla f(x)\sigma\Delta\mbf W+(\bar{\alpha}^\top \bar b^2)\frac{h^2}{2}\sum_{k=1}^m\mcal D^2f(x)(\sigma_k,\sigma_k) \notag\\
	&\;+(\bar{\alpha}^\top (\bar Ae))h^2\nabla f(x)f(x).
\end{align*}
Further, we define $\widetilde X_0=X_0$ and $\widetilde{X}_{n+1}=\widetilde X_{t_n,\widetilde{X}_n}(t_{n+1})$, $n=0,\ldots,N-1$.
In what follows, we show that $\{\widetilde{X}_n\}_{n=0}^{N}$ is the appurtenant numerical method for \eqref{SRKadd}.

\begin{lem}
	Let Assumption \ref{assum3} hold. Then $\{\widetilde{X}_n\}_{n=0}^{N}$ is of strong order $1.5$ and $1$ when approximating $\{X^{SRK}_n\}_{n=0}^N$ and $\{X(t)\}_{t\in[0,T]}$, respectively. More precisely, we have that for any $p\ge 1$,
	\begin{align*}
		\sup_{n=0,1,\ldots,N}\|\widetilde X_n-X^{SRK}_n\|_{\mbf L^p(\Omega)}\le Kh^{1.5}, \\
			\sup_{n=0,1,\ldots,N}\|\widetilde X_n-X(t_n)\|_{\mbf L^p(\Omega)}\le Kh.
	\end{align*}
\end{lem}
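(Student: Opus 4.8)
The plan is to derive both estimates from Theorem \ref{fundamental}, exactly as in the multiplicative case (Lemmas \ref{YSRKn-Ytilden} and \ref{Y-Ytilde}), by verifying conditions (A1)--(A3) for the appropriate pair of Markov chains. For the first inequality I take $\bar X_k = X^{SRK}_n$ and the comparison chain to be $\widetilde X_n$; for the second I take $\bar X_k = X(t_n)$ and again compare with $\widetilde X_n$.

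For the first inequality the essential simplification is that, by the very definition of $\widetilde X_{t,x}(t+h)$, the one-step difference is precisely the remainder already controlled just before the lemma, namely $X^{SRK}_{t,x}(t+h)-\widetilde X_{t,x}(t+h)=\bar R_{X^{SRK}}$. Since $|\mbf E\bar R_{X^{SRK}}|\le K(1+|x|^4)h^3$ and $\|\bar R_{X^{SRK}}\|_{\mbf L^{2p}(\Omega)}\le K(1+|x|^3)h^2$, this is exactly (A1) with $p_1=3$ and $p_2=2$. Theorem \ref{fundamental} then yields the global order $h^{p_2-1/2}=h^{1.5}$.

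For the second inequality I would compare $\widetilde X_{t,x}(t+h)$ with the It\^o--Taylor expansion $\Delta_1=X_{t,x}(t+h)-x$ already written in the proof of Lemma \ref{SRKaddconverge}. The order $h^{1/2}$ term $\sigma\Delta\mbf W$ and the order $h$ term $f(x)h$ coincide with the corresponding terms of $\widetilde X_{t,x}(t+h)$. The remaining discrepancy consists of the mean-zero order-$h^{3/2}$ contributions (the iterated integrals $\sum_k\nabla f(x)\sigma_k\int_t^{t+h}\int_t^s\ud W_k(r)\ud s$ measured against $(\bar\alpha^\top\bar b)h\nabla f(x)\sigma\Delta\mbf W$), together with deterministic $O(h^2)$ terms and higher-order remainders. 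Because all order-$h^{3/2}$ pieces have vanishing expectation, the mean of the difference is $O(h^2)$ while the $\mbf L^{2p}$-norm of the difference is $O(h^{3/2})$; this is (A1) with $p_1=2$ and $p_2=3/2$, so Theorem \ref{fundamental} gives the global order $h^{3/2-1/2}=h$.

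The auxiliary conditions are routine. Condition (A2) for $X^{SRK}_n$ follows from Lemma \ref{SRKaddconverge}(1), for $X(t_n)$ from the standard moment bound for the exact solution, and for $\widetilde X_n$ from its explicit linear-growth form together with \cite[Page 102, Lemma 2.2]{Milsteinbook}, mimicking the derivation of \eqref{Ytildenbound}. Condition (A3) holds for the exact flow by \cite[Lemma 2.2]{ZhangZQ13}, and for $\widetilde X$ it follows from the boundedness of $f,\nabla f,\mcal D^2f$ (ensured by $f\in\mbf C^4_b(\mbb R^d)$), which makes every coefficient appearing in $\widetilde X_{t,x}(t+h)$ Lipschitz, so the required contraction-type representation of $\widetilde X_{t,x}(t+h)-\widetilde X_{t,y}(t+h)$ is immediate. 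The main obstacle is verifying (A1) for the exact-solution comparison: one must expand $X_{t,x}(t+h)$ to sufficient order and match it term-by-term against $\widetilde X_{t,x}(t+h)$, carefully tracking which order-$h^{3/2}$ terms are mean-zero so that the expectation of the difference is genuinely $O(h^2)$ rather than only $O(h^{3/2})$; the rest reduces to standard moment estimates and the BDG and H\"older inequalities.
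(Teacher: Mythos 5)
Your proof of the first estimate coincides with the paper's: both identify the one-step difference $X^{SRK}_{t,x}(t+h)-\widetilde X_{t,x}(t+h)$ with the remainder $\bar R_{X^{SRK}}$ controlled just before the lemma, which gives condition (A1) with $p_1=3$, $p_2=2$, and then invoke Theorem \ref{fundamental} (with the auxiliary conditions (A2)--(A3) checked as you indicate). For the second estimate, however, you take a genuinely different route from the paper. The paper deduces it in one line: by the Minkowski inequality, $\|\widetilde X_n-X(t_n)\|_{\mbf L^p(\Omega)}\le \|\widetilde X_n-X^{SRK}_n\|_{\mbf L^p(\Omega)}+\|X^{SRK}_n-X(t_n)\|_{\mbf L^p(\Omega)}\le Kh^{1.5}+Kh$, using the first estimate together with Lemma \ref{SRKaddconverge}(2). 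You instead compare $\widetilde X_{t,x}(t+h)$ directly with the exact flow via the It\^o--Taylor expansion $\Delta_1$ and apply Theorem \ref{fundamental} a second time with $p_1=2$, $p_2=\frac{3}{2}$; this mirrors the multiplicative-noise treatment in Lemma \ref{Y-Ytilde} and is correct: the stochastic discrepancies $\sum_k\nabla f(x)\sigma_k\int_t^{t+h}\int_t^s\ud W_k(r)\ud s-(\bar\alpha^\top\bar b)h\nabla f(x)\sigma\Delta\mbf W$ and the triple iterated integral are mean-zero and of order $h^{3/2}$ in $\mbf L^{2p}(\Omega)$, while the leftover deterministic terms $h^2(\frac{1}{2}-\bar\alpha^\top(\bar Ae))\nabla f(x)f(x)$ and $\frac{h^2}{2}(\frac{1}{2}-\bar\alpha^\top\bar b^2)\sum_k\mcal D^2f(x)(\sigma_k,\sigma_k)$ (which do not cancel unless $\eta_2=0$) are $O(h^2)$, so (A1) indeed holds with $p_1=2=p_2+\frac{1}{2}$. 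The trade-off: your route is more laborious, since it requires re-verifying (A1)--(A3) for a new pair of chains, but it does not use the strong convergence of the SRK method itself (Lemma \ref{SRKaddconverge}(2)) as an input; the paper's route gets the second bound essentially for free from results already established.
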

\begin{proof}
	According to the definition of $\widetilde{X}_{t,x}(t+h)$, it holds that
	\begin{align*}
		\big|\mbf E\big(\widetilde X_{t,x}(t+h)-X^{SRK}_{t,x}(t+h)\big)\big|\le K(1+|x|^4)h^3, \\
		\|\widetilde X_{t,x}(t+h)-X^{SRK}_{t,x}(t+h)\|_{\mbf L^p(\Omega)}\le K(1+|x|^3)h^{2}.
	\end{align*}
	Further, it is not hard  to verify that  conditions of Theorem \ref{fundamental} are satisfied with $p_1=3$ and $p_2=2$. Thus, $\sup\limits_{n=0,1,\ldots,N}\|\widetilde X_n-X^{SRK}_n\|_{\mbf L^p(\Omega)}\le Kh^{1.5}$. This, combined with Lemma \ref{SRKaddconverge}(2) and the Minkowski inequality, completes the proof.
\end{proof}

Next, we introduce the continuous version of $\{\widetilde{X}_n\}_{n=0}^{N}$.
\begin{align*}
	&\;\widetilde X(t)=X_0+\sigma\mbf W(t)+\int_0^tf(\widetilde X(\kappa_N(s)))\ud s+(\bar\alpha ^\top \bar b)h\int_0^t\nabla f(\widetilde{X}(\kappa_N(s)))\sigma\ud \mbf W(s)\\
	&\;+h\int_0^t\Big[(\bar\alpha^\top(\bar Ae)\nabla f(\widetilde X(\kappa_N(s)))f(\widetilde X(\kappa_N(s)))+\frac{1}{2}(\bar{\alpha}^\top\bar b^2)\sum_{k=1}^m\mcal D^2f(\widetilde X(\kappa_N(s)))(\sigma_k,\sigma_k)\Big]\ud s.
\end{align*}

Define $V^N(t)=N(\widetilde{X}(t)-X(t))$, $t\in[0,T]$. As a counterpart  of Lemma \ref{UNexpression}, we give the equation  satisfied by $V^N$.
\begin{lem}\label{VNexpression}
Let Assumptions \ref{assum3} hold. Then $V^N$ has the following representation
\begin{align*}
	&\;V^N(t)=\int_0^t\nabla f(X(s))V^N(s)\ud s+\sum_{i=1}^4J_i^N(t)+R_{V^N}(t),
\end{align*}
where 
\begin{align*}
	&\;J_1^N(t)=-T\int_0^t\nabla f(\widetilde{X}(\kappa_N(s)))(\lfloor \frac{Ns}{T}\rfloor-\frac{Ns}{T}+1)\sigma\ud \mbf W(s),\\
	&\;J_2^N(t)=-T\int_0^t\big[\nabla f(\widetilde{X}(\kappa_N(s)))f(\widetilde{X}(\kappa_N(s)))+\frac{1}{2}\sum_{k=1}^m\mcal D^2f(\widetilde{X}(\kappa_N(s)))(\sigma_k,\sigma_k)\big](\frac{Ns}{T}-\lfloor \frac{Ns}{T}\rfloor)\ud s,\\
	&\; J_3^N(t)=(\bar\alpha ^\top \bar b)T\int_0^t\nabla f(\widetilde{X}(\kappa_N(s)))\sigma\ud \mbf W(s),\\
	&\; J_4^N(t)=T\int_0^t\Big[(\bar\alpha^\top(\bar Ae)\nabla f(\widetilde X(\kappa_N(s)))f(\widetilde X(\kappa_N(s)))+\frac{1}{2}(\bar{\alpha}^\top\bar b^2))\sum_{k=1}^m\mcal D^2f(\widetilde X(\kappa_N(s)))(\sigma_k,\sigma_k)\Big]\ud s
\end{align*}
and the remainder $R_{V^{N}}$ satisfies $\sup_{t\in[0,T]}\|R_{V^{N}}(t)\|_{\mbf L^2(\Omega)}\le Kh^{0.5}$.
\end{lem}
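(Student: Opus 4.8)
The plan is to follow the route of Lemma \ref{UNexpression}, but with the substantial simplification that the noise coefficient $\sigma$ is constant. First I would subtract the exact solution $X(t)=X_0+\int_0^t f(X(s))\ud s+\sigma\mbf W(t)$ from the interpolant $\widetilde X(t)$. Since both processes carry the \emph{identical} term $\sigma\mbf W(t)$, this cancels exactly, leaving
\begin{align*}
V^N(t)=N\int_0^t\big[f(\widetilde X(\kappa_N(s)))-f(X(s))\big]\ud s+J_3^N(t)+J_4^N(t),
\end{align*}
where $J_3^N$ and $J_4^N$ come directly from the two $h$-weighted correction integrals in $\widetilde X$ after multiplying by $N$ and using $Nh=T$. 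Thus the entire problem reduces to expanding the single drift difference $N\int_0^t[f(\widetilde X(\kappa_N(s)))-f(X(s))]\ud s$.

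Next I would split this difference as in \eqref{sec4eq1}, writing $f(\widetilde X(\kappa_N(s)))-f(X(s))=[f(\widetilde X(s))-f(X(s))]-[f(\widetilde X(s))-f(\widetilde X(\kappa_N(s)))]$. A first-order Taylor expansion of the first bracket yields the linear feedback term $\int_0^t\nabla f(X(s))V^N(s)\ud s$ plus a second-order remainder of size $Kh$, using the strong order-one bound $\|\widetilde X(s)-X(s)\|_{\mbf L^p}\le Kh$ together with the $\mbf C^4_b$-boundedness of $\mcal D^2 f$. For the second bracket I would Taylor expand to second order around $\widetilde X(\kappa_N(s))$ and insert the one-step increment expansion read off from the continuous version of $\widetilde X$, namely
\begin{align*}
\widetilde X(s)-\widetilde X(\kappa_N(s))=\sigma(\mbf W(s)-\mbf W(\kappa_N(s)))+\int_{\kappa_N(s)}^s f(\widetilde X(\kappa_N(r)))\ud r+R(s),
\end{align*}
with $\|R(s)\|_{\mbf L^p}\le Kh^{3/2}$.

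The two leading contributions are then handled as before. The first-order Taylor term, whose leading part is $-N\int_0^t\nabla f(\widetilde X(\kappa_N(s)))\sigma(\mbf W(s)-\mbf W(\kappa_N(s)))\ud s$, is converted by the stochastic Fubini theorem into exactly $J_1^N(t)$, up to an It\^o-isometry remainder of order $h^{1/2}$, precisely as in the derivation of \eqref{fbarminus2}; the accompanying drift piece $-N\int_0^t\nabla f(\widetilde X(\kappa_N(s)))f(\widetilde X(\kappa_N(s)))(s-\kappa_N(s))\ud s$ supplies the first half of $J_2^N(t)$. For the second-order Taylor term $-\tfrac N2\int_0^t\sum_{k,l}\mcal D^2f(\widetilde X(\kappa_N(s)))(\sigma_k,\sigma_l)(W_k(s)-W_k(\kappa_N(s)))(W_l(s)-W_l(\kappa_N(s)))\ud s$ I would replace the product of increments by its conditional mean $\delta_{kl}(s-\kappa_N(s))$; the diagonal $k=l$ terms then produce the second half of $J_2^N(t)$, while the off-diagonal terms together with the centred quadratic fluctuations are absorbed into $R_{V^N}$.

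The main obstacle is the $\mbf L^2$-control $\sup_t\|R_{V^N}(t)\|_{\mbf L^2}\le Kh^{1/2}$. The delicate pieces are the centred quadratic terms of the form $N\int_0^t H(\widetilde X(\kappa_N(s)))\big[(W_k(s)-W_k(\kappa_N(s)))(W_l(s)-W_l(\kappa_N(s)))-\delta_{kl}(s-\kappa_N(s))\big]\ud s$, each of which, by the It\^o formula, is a block-wise mean-zero double stochastic integral given $\mcal F_{\kappa_N(s)}$. I would bound these exactly as in \eqref{sec3eq4}: writing the integral as $N\sum_i M_i(t)$ over the partition blocks, the independence of the Brownian increments across blocks and $\mbf E(M_j\mid\mcal F_{t_j})=0$ kill all cross-block expectations, so $\mbf E|N\sum_i M_i|^2=N^2\sum_i\mbf E|M_i|^2\le N^2\cdot N\cdot Kh^4=KN^3h^4=Kh$, whence an $\mbf L^2$-norm of order $h^{1/2}$. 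The Fubini remainder and the higher-order Taylor remainders are of the same or smaller order by It\^o isometry and the $h^{3/2}$ bound on $R(s)$. Assembling $J_1^N,\dots,J_4^N$ and collecting the above into $R_{V^N}$ then completes the proof.
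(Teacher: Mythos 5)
Your proposal is correct and follows essentially the same route the paper intends: the paper omits this proof precisely because it is the additive-noise specialization of the proof of Lemma \ref{UNexpression}, and your argument reproduces that proof — the cancellation of the common $\sigma\mbf W(t)$ term, the drift-difference splitting as in \eqref{sec4eq1}, the stochastic Fubini conversion yielding $J_1^N$, the replacement of squared increments by $s-\kappa_N(s)$ for $J_2^N$, and the block-wise martingale orthogonality bound as in \eqref{sec3eq4} for the $h^{1/2}$ remainder estimate. No gaps.
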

The proof of Lemma \ref{VNexpression} is similar to that of Lemma \ref{UNexpression} and thus is omitted. Now we present the asymptotic error distribution of \eqref{SRKadd}.
\begin{theo}\label{maintheorem3}
Let Assumption \ref{assum3} hold. Then for any $t\in[0,T]$, $V^N(t)\overset{d}{\Longrightarrow}V(t)$ as $N\to\infty$ and thus $N(X^{SRK}_N-X(T))\overset{d}{\Longrightarrow}V(T)$, where $\{V(t)\}_{t\in[0,T]}$ is the strong solution of the following equation
\begin{align*}
V(t)=&\;\int_{0}^t\nabla f(X(s))V(s)\ud s+\big(\bar\alpha^\top(\bar Ae)-\frac{1}{2}\big)T\int_0^t\nabla f(X(s))f(X(s))\ud s \\
&\;+\frac{1}{2}\big(\bar\alpha^\top\bar b^2-\frac{1}{2}\big)T\sum_{k=1}^m\int_0^t\mcal D^2f(X(s))(\sigma_k,\sigma_k)\ud s+\big(\bar\alpha^\top\bar b-\frac{1}{2}\big)T\int_0^t\nabla f(X(s))\sigma \ud \mbf W(s)\\
&\;-\frac{T}{\sqrt{12}}\int_0^t\nabla f(X(s))\sigma \ud  \widetilde {\mbf W}(s).
\end{align*}
Here, $\widetilde {\mbf W}$ is an $m$-dimensional standard Brownian motion independent of $\mbf W$.
\end{theo}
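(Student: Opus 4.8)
The plan is to follow Framework 1 exactly as in the multiplicative case of Sections \ref{Sec3}--\ref{Sec4}. Since $\{\widetilde X_n\}_{n=0}^N$ approximates $\{X^{SRK}_n\}_{n=0}^N$ with strong order $1.5$, the difference $N(X^{SRK}_N-\widetilde X_N)\to 0$ in probability, so by Slutzky's theorem $N(X^{SRK}_N-X(T))$ and $N(\widetilde X_N-X(T))=V^N(T)$ share the same limit distribution (the exact analogue of Lemma \ref{samedistri}). Hence it suffices to prove $V^N(t)\overset{d}{\Longrightarrow}V(t)$, and I would start from the representation in Lemma \ref{VNexpression}, namely $V^N(t)=\int_0^t\nabla f(X(s))V^N(s)\,\ud s+\sum_{i=1}^4 J_i^N(t)+R_{V^N}(t)$ with $\sup_t\|R_{V^N}(t)\|_{\mbf L^2(\Omega)}\le Kh^{1/2}$. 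The structure is actually simpler than the multiplicative case, because the only term carrying a genuinely oscillatory stochastic integral is $J_1^N$; the remaining three converge in $\mbf L^2(\Omega)$, and there is no triple It\^o integral as in $I_4^N$.

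For the three convergent terms I would argue as in Lemma \ref{UN-UtildeN}. Using the uniform regularity of $\widetilde X$ together with the bound $\sup_{s}\|\widetilde X(\kappa_N(s))-X(s)\|_{\mbf L^p(\Omega)}\le K h^{1/2}$ (the counterpart of \eqref{sec4eq4}), the Lipschitz property of $\nabla f$, $f$ and $\mcal D^2 f$, and the It\^o isometry, one obtains $J_3^N\to J_3$ and $J_4^N\to J_4$ in $\mbf L^2(\Omega)$, where $J_3(t)=(\bar\alpha^\top\bar b)T\int_0^t\nabla f(X(s))\sigma\,\ud\mbf W(s)$ and $J_4(t)=T\int_0^t[\bar\alpha^\top(\bar Ae)\nabla f(X(s))f(X(s))+\tfrac12(\bar\alpha^\top\bar b^2)\sum_k\mcal D^2 f(X(s))(\sigma_k,\sigma_k)]\,\ud s$. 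For $J_2^N$ I would invoke \cite[Proposition 4.2]{HJWY24} to replace the weight $(\tfrac{Ns}{T}-\lfloor\tfrac{Ns}{T}\rfloor)$ by its average $\tfrac12$, giving $J_2^N\to J_2$ in $\mbf L^2(\Omega)$ with $J_2(t)=-\tfrac{T}{2}\int_0^t[\nabla f(X(s))f(X(s))+\tfrac12\sum_k\mcal D^2 f(X(s))(\sigma_k,\sigma_k)]\,\ud s$. Adding $J_2$ and $J_4$ reproduces exactly the two Lebesgue drift corrections $(\bar\alpha^\top(\bar Ae)-\tfrac12)T\int_0^t\nabla f(X)f(X)\,\ud s$ and $\tfrac12(\bar\alpha^\top\bar b^2-\tfrac12)T\sum_k\int_0^t\mcal D^2 f(X)(\sigma_k,\sigma_k)\,\ud s$ appearing in $V(t)$.

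The heart of the argument, and the step I expect to be the main obstacle, is the conditional-Gaussian limit of the oscillatory term $J_1^N(t)=-T\int_0^t\nabla f(\widetilde X(\kappa_N(s)))(\lfloor\tfrac{Ns}{T}\rfloor-\tfrac{Ns}{T}+1)\sigma\,\ud\mbf W(s)$, handled as in Lemma \ref{IN4} via \cite[Theorem 4-1]{Jacod97}. I would compute the two bracket processes: since the weight $1-(\tfrac{Ns}{T}-\lfloor\tfrac{Ns}{T}\rfloor)$ has period-average $\int_0^1(1-u)\,\ud u=\tfrac12$, \cite[Proposition 4.2]{HJWY24} gives $\LL J_1^{N,i},W_j\RR_t\to-\tfrac{T}{2}\int_0^t[\nabla f(X(s))\sigma]_{ij}\,\ud s$ in probability, while $\int_0^1(1-u)^2\,\ud u=\tfrac13$ gives $\LL J_1^{N,i},J_1^{N,k}\RR_t\to\tfrac{T^2}{3}\int_0^t[\nabla f(X(s))\sigma\sigma^\top\nabla f(X(s))^\top]_{ik}\,\ud s$. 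By \cite[Theorem 4-1]{Jacod97} and the representation \cite[Proposition 1-4]{Jacod97}, the limit splits into a part integrated against $\mbf W$, forced by the cross bracket to have integrand $-\tfrac{T}{2}\nabla f(X)\sigma$, and an orthogonal part whose residual variance $\tfrac{T^2}{3}-\tfrac{T^2}{4}=\tfrac{T^2}{12}$ is realized by $-\tfrac{T}{\sqrt{12}}\nabla f(X)\sigma$ against an independent $m$-dimensional standard Brownian motion $\widetilde{\mbf W}$. Thus $J_1^N\overset{d}{\Longrightarrow}J_1$ with $J_1(t)=-\tfrac{T}{2}\int_0^t\nabla f(X(s))\sigma\,\ud\mbf W(s)-\tfrac{T}{\sqrt{12}}\int_0^t\nabla f(X(s))\sigma\,\ud\widetilde{\mbf W}(s)$, and combining $J_1$ with $J_3$ yields the two martingale terms of $V(t)$.

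Finally I would assemble the pieces exactly as in the proof of Theorem \ref{maintheorem1}. Introduce the auxiliary process $\widetilde V^N$ solving the same linear equation with the three convergent terms replaced by their limits $J_2,J_3,J_4$ but keeping $J_1^N$; a Gronwall estimate combined with the $\mbf L^2$-convergence of $J_2^N,J_3^N,J_4^N$ and $R_{V^N}$ together with the dominated convergence theorem yields $\mbf E|V^N(t)-\widetilde V^N(t)|^2\to 0$ (the analogue of Lemma \ref{UN-UtildeN}). Then, invoking the Skorohod representation theorem \cite[Theorem 17.56]{Klenke} to realize $J_1^N\to J_1$ almost surely on an extended space, with uniform integrability from Doob's inequality and the moment bounds, the continuity of the solution map of the linear SDE gives $\widetilde V^N(t)\overset{d}{\Longrightarrow}V(t)$, whence $V^N(t)\overset{d}{\Longrightarrow}V(t)$ by Slutzky's theorem. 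Taking $t=T$ and using the appurtenant reduction produces $N(X^{SRK}_N-X(T))\overset{d}{\Longrightarrow}V(T)$. The only genuinely delicate point is the bracket computation and the independent-Brownian-motion identification for $J_1^N$; everything else is a direct transcription of the multiplicative arguments, simplified by the absence of the triple It\^o integral that appeared in $I_4^N$.
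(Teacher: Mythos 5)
Your proposal is correct and follows essentially the same route as the paper's own proof: the same decomposition from Lemma \ref{VNexpression}, $\mbf L^p$-convergence of $J_2^N,J_3^N,J_4^N$ (with \cite[Proposition 4.2]{HJWY24} averaging the sawtooth weight), Jacod's theorem for the bracket computation of $J_1^N$, and the Gronwall/Skorohod/Slutzky assembly borrowed from Lemma \ref{UN-UtildeN} and Theorem \ref{maintheorem1}. In fact your write-up supplies the bracket calculations and the $\frac{T^2}{3}-\frac{T^2}{4}=\frac{T^2}{12}$ residual-variance identification explicitly, which the paper only references by analogy to Lemma \ref{IN4}.
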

\begin{proof}
It is not hard to verify that for any $t\in[0,T]$ and $p\ge 1$, $J_3^N(t)$ converges to $(\bar\alpha^\top\bar b)T\int_0^t\nabla f(X(s))\sigma\ud \mbf W(s)$ in $\mbf L^p(\Omega,\mcal F,\mbf P;\mbb R^d)$, and $J_4^N(t)$ converges to $$T\int_0^t\Big[(\bar\alpha^\top(\bar Ae))\nabla f(X(s))f(X(s))+\frac{1}{2}(\bar{\alpha}^\top\bar b^2)\sum_{k=1}^m\mcal D^2f(X(s))(\sigma_k,\sigma_k)\Big]\ud s$$
in $\mbf L^p(\Omega,\mcal F,\mbf P;\mbb R^d)$. 
Similar to the argument for $\|I_3^N(t)-I_3(t)\|_{\mbf L^p(\Omega)}\to 0$ in the proof of Lemma \ref{UN-UtildeN}, applying \cite[Proposition 4.2]{HJWY24} yields that $J_2^N(t)$ converges to 
$$-\frac{T}{2}\int_0^t\big[\nabla f(X(s))f(X(s))+\frac{1}{2}\sum_{k=1}^m\mcal D^2f(X(s))(\sigma_k,\sigma_k)\big]\ud s$$
in $\mbf L^p(\Omega,\mcal F,\mbf P;\mbb R^d)$.

Further, following the ideas for proving Lemma \ref{IN4}, one can show that $J_1^N\overset{stably}{\Longrightarrow}J_1$ as $\mbf C([0,T];\mbb R^d)$-valued random variables as $N\to\infty$, where
\begin{align*}
	J_1(t):=-\frac{T}{2}\int_0^t\nabla f(X(s))\sigma\ud \mbf W(s)-\frac{T}{\sqrt{12}}\int_0^t\nabla f(X(s))\sigma\ud \widetilde{\mbf W}(s),~t\in[0,T].
\end{align*}
Following the arguments for proving Lemma \ref{UN-UtildeN} and Theorem \ref{maintheorem1}, we finally complete the proof.
\end{proof}
The following result gives the upper estimate of $\mbf E|V(T)|^2$.
\begin{theo}\label{Vupperbound}
	Let Assumption \ref{assum3} hold and $T\ge 1$.  Then there is $L_2>0$ independent of $T$ such that
	\begin{align*}
		\mbf E|V(T)|^2\le e^{L_2T}(1+\eta_2)T^3,
	\end{align*}
	where $\eta_2=(\bar\alpha^\top(\bar Ae)-\frac{1}{2})^2+(\bar\alpha^\top\bar b^2-\frac{1}{2})^2+(\bar{\alpha}^\top\bar b-\frac{1}{2})^2$.
\end{theo}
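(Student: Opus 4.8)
The plan is to mirror the proof of Theorem \ref{upperbound}, now applied to the linear SDE for $V$ furnished by Theorem \ref{maintheorem3}. Abbreviate the coefficients by $c_1=\bar\alpha^\top(\bar Ae)-\frac12$, $c_2=\frac12\big(\bar\alpha^\top\bar b^2-\frac12\big)$ and $c_3=\bar\alpha^\top\bar b-\frac12$, so that $\eta_2=c_1^2+(2c_2)^2+c_3^2$ and in particular $c_1^2+c_2^2\le\eta_2$ and $c_3^2\le\eta_2$. First I would apply the It\^o formula to $|V(t)|^2$. Because $\widetilde{\mbf W}$ is independent of $\mbf W$, the two stochastic integrals are orthogonal martingales, so the cross variation vanishes and the quadratic-variation contribution is simply the sum of the squared Hilbert--Schmidt norms of the two diffusion coefficients. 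This gives
\begin{align*}
\mbf E|V(t)|^2=&\;2\mbf E\int_0^t\LL V(s),\nabla f(X(s))V(s)\RR\ud s\\
&\;+2T\mbf E\int_0^t\LL V(s),c_1\nabla f(X(s))f(X(s))+c_2\sum_{k=1}^m\mcal D^2f(X(s))(\sigma_k,\sigma_k)\RR\ud s\\
&\;+T^2\Big(c_3^2+\frac{1}{12}\Big)\mbf E\int_0^t|\nabla f(X(s))\sigma|^2\ud s,
\end{align*}
where $|\nabla f(X(s))\sigma|^2$ denotes the Hilbert--Schmidt norm arising from the It\^o formula.

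Next I would bound each term. Under Assumption \ref{assum3} we have $f\in\mbf C_b^4(\mbb R^d)$, so $\nabla f$ and $\mcal D^2f$ are bounded and $f$ grows at most linearly. Hence the first term is controlled by $L\int_0^t\mbf E|V(s)|^2\ud s$; the Cauchy--Schwarz and Young inequalities applied to the second term yield the bound $\int_0^t\mbf E|V(s)|^2\ud s+LT^2(c_1^2+c_2^2)\int_0^t(1+\mbf E|X(s)|^2)\ud s$; and the third term is at most $LT^2\big(c_3^2+\frac{1}{12}\big)\int_0^t\ud s$. The crucial observation is that the coefficient $\frac{T}{\sqrt{12}}$ of the $\widetilde{\mbf W}$-integral contributes the \emph{constant} $\frac{1}{12}$, which together with $c_1^2+c_2^2+c_3^2\le\eta_2$ gathers the whole inhomogeneous part into a multiple of $(1+\eta_2)$. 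Collecting these estimates I obtain
\begin{align*}
\mbf E|V(t)|^2\le L\int_0^t\mbf E|V(s)|^2\ud s+LT^2(1+\eta_2)\int_0^t(1+\mbf E|X(s)|^2)\ud s.
\end{align*}

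Finally, the moment estimate for $X$ analogous to \eqref{Ybound} gives $\sup_{s\in[0,T]}\mbf E|X(s)|^2\le Le^{LT}$, whence $\int_0^t(1+\mbf E|X(s)|^2)\ud s\le LTe^{LT}$ and the inhomogeneous term is at most $LT^3(1+\eta_2)e^{LT}$. The Gronwall inequality then produces $\mbf E|V(T)|^2\le LT^3(1+\eta_2)e^{LT}$, and since $T\ge1$ the prefactor $L$ is absorbed into the exponential to yield $e^{L_2T}(1+\eta_2)T^3$ for a suitable $L_2$ independent of $T$. I do not anticipate any serious obstacle, as every step parallels the multiplicative-noise proof of Theorem \ref{upperbound}; the only point requiring genuine care is the bookkeeping that isolates the additive constant $\frac{1}{12}$, so as to recover the factor $(1+\eta_2)$ rather than $\eta_2$ alone, together with the correct Hilbert--Schmidt interpretation of the diffusion contribution in the It\^o formula.
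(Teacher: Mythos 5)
Your proposal is correct and takes essentially the same route as the paper: the paper omits this proof, stating it is analogous to Theorem \ref{upperbound}, and your argument is precisely that adaptation (It\^o formula for $|V(t)|^2$, vanishing cross variation by independence of $\mbf W$ and $\widetilde{\mbf W}$, boundedness of $\nabla f$ and $\mcal D^2 f$ with linear growth of $f$, the bound $\sup_{s\in[0,T]}\mbf E|X(s)|^2\le Le^{LT}$, and Gronwall with constants absorbed via $T\ge 1$). The bookkeeping isolating $c_1^2+c_2^2\le\eta_2$, $c_3^2\le\eta_2$ and the constant $\tfrac{1}{12}$ into the factor $(1+\eta_2)$ matches how $\eta_1$ and the $\mbf H_i$ terms are handled in the multiplicative-noise proof.
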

The proof of Theorem \ref{Vupperbound} is similar to that of Theorem \ref{upperbound} and thus is omitted.
Combining Theorem \ref{maintheorem3} and Lemma \ref{SRKaddconverge}(3), we have the following facts.
\begin{theo}\label{maintheorem4}
Let Assumption \ref{assum3} hold. If $\eta_2=0$, then the SRK method \eqref{SRKadd} is of weak order $2$, and the limit distribution of  $N(X^{SRK}_N-X(T))$ as $N\to\infty$ takes the following unified form
\begin{align*}
	V(T)=\int_0^T\nabla f(X(s))V(s)\ud s-\frac{T}{\sqrt{12}}\int_0^T\nabla f(X(s))\sigma\ud \widetilde{\mbf W}(s).
\end{align*}
\end{theo}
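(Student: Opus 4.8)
The plan is to recognize that Theorem \ref{maintheorem4} is a direct corollary of two results already established in the excerpt, namely Lemma \ref{SRKaddconverge}(3) for the weak-order-$2$ claim and Theorem \ref{maintheorem3} for the limit distribution. The key structural observation is that $\eta_2$ is a sum of three squares, so the hypothesis $\eta_2=0$ is equivalent to the simultaneous vanishing of each summand, i.e.
\[
\bar\alpha^\top(\bar Ae)=\tfrac12,\qquad \bar\alpha^\top\bar b^2=\tfrac12,\qquad \bar\alpha^\top\bar b=\tfrac12.
\]
These three scalar conditions serve as the common hinge for both assertions.

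For the first assertion, I would simply note that the displayed conditions are precisely the additional hypotheses in Lemma \ref{SRKaddconverge}(3). Since Assumption \ref{assum3} is in force by hypothesis and $\eta_2=0$ supplies exactly $\bar\alpha^\top(\bar Ae)=\bar\alpha^\top\bar b^2=\bar\alpha^\top\bar b=\tfrac12$, Lemma \ref{SRKaddconverge}(3) applies verbatim and yields weak order $2$ for \eqref{SRKadd}. No new computation is needed here.

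For the second assertion, I would invoke Theorem \ref{maintheorem3}, which already gives $N(X^{SRK}_N-X(T))\overset{d}{\Longrightarrow}V(T)$ with $V$ solving the linear SDE displayed there, and then substitute the three order conditions into that SDE. The coefficients of the three ``bias'' terms are exactly $\bar\alpha^\top(\bar Ae)-\tfrac12$, $\tfrac12(\bar\alpha^\top\bar b^2-\tfrac12)$, and $\bar\alpha^\top\bar b-\tfrac12$, all of which vanish under $\eta_2=0$. What survives is precisely
\[
V(t)=\int_0^t\nabla f(X(s))V(s)\,\ud s-\frac{T}{\sqrt{12}}\int_0^t\nabla f(X(s))\sigma\,\ud\widetilde{\mbf W}(s),
\]
which is the claimed unified form at $t=T$.

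There is no genuine obstacle in this proof; the substance has been carried by the preceding lemmas and Theorem \ref{maintheorem3}, and the only thing to verify carefully is the algebraic identification of $\eta_2=0$ with the three order conditions, together with matching each vanishing coefficient against the corresponding term in the limit SDE. The proof is therefore a short assembly step, parallel to how Theorem \ref{maintheorem2} follows from Theorem \ref{maintheorem1} and Lemma \ref{weakorder} in the multiplicative-noise case.
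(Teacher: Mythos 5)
Your proof is correct and follows exactly the paper's route: the paper introduces Theorem \ref{maintheorem4} with the single line ``Combining Theorem \ref{maintheorem3} and Lemma \ref{SRKaddconverge}(3), we have the following facts,'' which is precisely your assembly of the sum-of-squares identification $\eta_2=0\iff\bar\alpha^\top(\bar Ae)=\bar\alpha^\top\bar b^2=\bar\alpha^\top\bar b=\tfrac12$, the weak-order claim from Lemma \ref{SRKaddconverge}(3), and the vanishing of the three bias terms in the limit SDE of Theorem \ref{maintheorem3}.
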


\begin{rem}\label{inf2}
As is mentioned in Remark \ref{inf1},	Theorems \ref{maintheorem3} and  \ref{Vupperbound}  indicate that $\eta_2$ is the key parameter reflecting the growth rate of the mean-square error of the SRK method \eqref{SRKadd}. In addition, we obtain the following inferences: (1) For the SRK methods \eqref{SRKadd} of strong order $1$, the smaller $\eta_2$ is,   the smaller the mean-square error is  after a long time. (2)  Among the SRK methods \eqref{SRKadd} of strong order $1$, those of weak order $2$, i.e., $\eta_2=0$, have the smallest  mean-square errors after a long time.    
\end{rem}

\section{Numerical experiments} \label{Sec6}
In this section, we perform numerical experiments to verify our theoretical analysis. For convenience, in this section, we consider SDEs with additive noise. We verify the convergence  in distribution of $N(X^{SRK}_N-X(T))$ in Example \ref{ex6.1}, and verify the inferences of Remark \ref{inf2} in Example \ref{ex6.2} by presenting the evolution of the mean-square error of numerical methods w.r.t. time.    

\begin{ex}\label{ex6.1}
Consider the following  SDE:
\begin{align}\label{SDE1}
	\begin{cases}
		\ud X(t)=(-10X(t)+\sin(X(t)))\ud t+\sigma_1 \ud W_1(t),~t\in(0,T],\\
		X(0)=x_0\in\mbb R,
	\end{cases}
\end{align}
where $W_1$ is a  one-dimensional standard Brownian motion and $\sigma_1\in\mbb R$ is a given constant.   
\end{ex}

In this experiment, we consider the trapezoid method for \eqref{SDE1}, which reads
\begin{align}\label{trapezoid1}
	X_{n+1}=X_n+\frac{1}{2}h(f(X_n)+f(X_{n+1}))+\sigma_1 (W_1(t_{n+1})-W_1(t_n)),~n=0,\ldots,N-1
\end{align}
with $X_0=x_0$, where $f(x)=-10x+\sin(x)$. The trapezoid method is a $2$-stage Runge--Kutta method taking the form \eqref{SRKadd}, whose Butcher tableau is \begin{tabular}{|c c| c}
	$0$ & $0$ & $0$ \\
	$\frac{1}{2}$ & $\frac{1}{2}$ & $1$ \\
	\hline
	$\frac{1}{2}$ & $\frac{1}{2}$ &  
\end{tabular}.
It is not to verify that $\bar{\alpha}^\top e=1$ and $\eta_2=0$, which means that the trapezoid method is of strong order $1$ and weak order $2$ according to Lemma \ref{SRKaddconverge}. It follows from Theorem \ref{maintheorem3} that $N(X_N-X(T))\overset{d}{\Longrightarrow}V(T)$, where 
\begin{align*}
V(T)=\int_0^Tf'(X(s))V(s)\ud s-\frac{T}{\sqrt{12}}\int_0^Tf'(X(s))\sigma_1\ud \widetilde W_1(s),
\end{align*}
where $\widetilde{W}_1$ is a one-dimensional standard Brownian motion and independent of $W_1$. 

Next, we conduct the numerical experiment to verify $N(X_N-X(T))\overset{d}{\Longrightarrow}V(T)$, which is equivalent to  $\lim\limits_{N\to\infty}\big(\mbf E\varphi(N(X_N-X(T)))-\mbf E\varphi(V(T))\big)=0$ for any bounded $\varphi\in\mbf C(\mbb R^d)$. In this experiment, we set $x_0=1$, $\sigma_1=1$ and $T=2^{-2}$. We use the fixed point iteration to solve \eqref{trapezoid1} by setting the tolerance  error $10^{-12}$. The exact solution of $X(T)$ is approximated by applying \eqref{trapezoid1} with a much small step-size $2^{-16}$. In addition, we use $V_N$ to approximate $V(T)$, where $V_N$ is generated by the Euler method:
\begin{align*}
	V_{n+1}=V_n+hf'(X_n)V_n-\frac{T}{\sqrt{12}}f'(X_n)\sigma_1 (\widetilde W_1(t_{n+1})-\widetilde W_1(t_{n})),~n=0,\ldots,N-1
\end{align*}
with $V_0=0$. 
The expectations of $\varphi(N(X_N-X(T)))$ and $\varphi(V(T))$ are realized by using the Monte--Carlo method with $150000$ sample paths. Table \ref{T1} displays the error $\big|\mbf E\varphi(N(X_N-X(T)))-\mbf E\varphi(V(T))\big|$ for different step-sizes and test functions $\varphi$. We observe that this error tends to $0$  as $h$ decreases. This verifies the conclusion of Theorem \ref{maintheorem3}.  

\begin{table}[H]
	\centering
	\caption{Error between $\mbf E \varphi(N(X_N-X(T)))$ and $\mbf E\varphi(V(T))$  for different step-sizes and $\varphi$ for \eqref{SDE1}. }\label{T1}
	\vspace{2mm}
	\setlength{\tabcolsep}{0.5mm}{
		\begin{tabular}{| c| c| c| c| c|c|c| } 
			\hline
			\diagbox{$\varphi$}{error}{$h$} 	& $2^{-3}$& $2^{-4}$ &$2^{-5}$ & $2^{-6}$ & $2^{-6}$ &$2^{-7}$\\ 
			\hline
			$\sin(x)$
			&5.4543E-2& 2.5259E-2& 1.2327E-2& 6.8793226E-3& 3.0687E-3& 1.3684E-3                \\ 
			\hline
			$\sin(x^3)$
			
		&3.7141E-3& 1.7050E-3& 8.3550E-4& 4.2827E-4& 2.0214E-4& 1.0498E-4            \\ 
			\hline
	\end{tabular}}
\end{table}

\begin{ex}\label{ex6.2}
	Consider the following one-dimensional SDE:
	\begin{align}\label{SDE2}
		\begin{cases}
			\ud X(t)=(X(t)+\ln(1+X(t)^2))\ud t+\sigma_1\ud  W_1(t)+\sigma_2\ud W_2(t),\\
			X(0)=x_0\in\mbb R,
		\end{cases}
	\end{align}
	where $W_1$ and $W_2$ are two independent one-dimensional standard Brownian motions, and $\sigma_1,\sigma_2\in\mbb R$ are given constants. 
\end{ex}
In this experiment, we consider four numerical methods for approximating \eqref{SDE2}: the trapezoid method, the $\theta$-method with $\theta=\frac{1}{2}$, $\frac{\sqrt{2}}{2}$, $1$.  The  $\theta$-method  is a $1$-stage Runge--Kutta method taking of form \eqref{SRKadd}, whose Butcher tableau is  
\begin{tabular}{|c | c}
	$\theta$ &   $\theta$ \\
	\hline
	$1$ &  
\end{tabular}. In addition, the $\frac{1}{2}$-method and $1$-method are indeed the midpoint method and the implicit Euler method, respectively. It is not difficult to compute $\eta_2$ for these four methods. 
\begin{table}[H]
	\centering
	\caption{Values of $\eta_2$ for four methods}\label{T2}
	\vspace{2mm}
	\setlength{\tabcolsep}{1mm}{
		\begin{tabular}{| c| c| c| c|c| } 
			\hline
			& trapezoid method& midpoint method &$\frac{\sqrt{2}}{2}$-method & implicit Euler method\\ 
			\hline
		$\eta_2$
			&0&$\frac{1}{16}=0.0625 $&  $\frac{3-2\sqrt{2}}{2}\approx 0.08579$&  $\frac{3}{4}$            \\ 
			\hline
	\end{tabular}}
\end{table}

First, we test the mean-square convergence orders for the above four methods.  We set $x_0=1$, $\sigma_1=\sigma_2=1$ and $T=1$. These methods are realized by applying the fixed point iteration with a tolerance error $10^{-10}$. The exact solution $X(T)$ are approximated by the trapezoid method with the small step-size $h=2^{-16}$. The expectation is approximately obtained based on the Monte--Carlo method with $5000$ sample paths. It is observed in Figure \ref{F1} and Table \ref{T3} that all these four methods have first order mean-square convergence. The mean-square error between the numerical solution of the   trapezoid method and $X(T)$ is smaller than that between the numerical solution of the   midpoint method and $X(T)$, although they are quite close.

\begin{figure}[H]
	\centering
\includegraphics[width=1\textwidth]{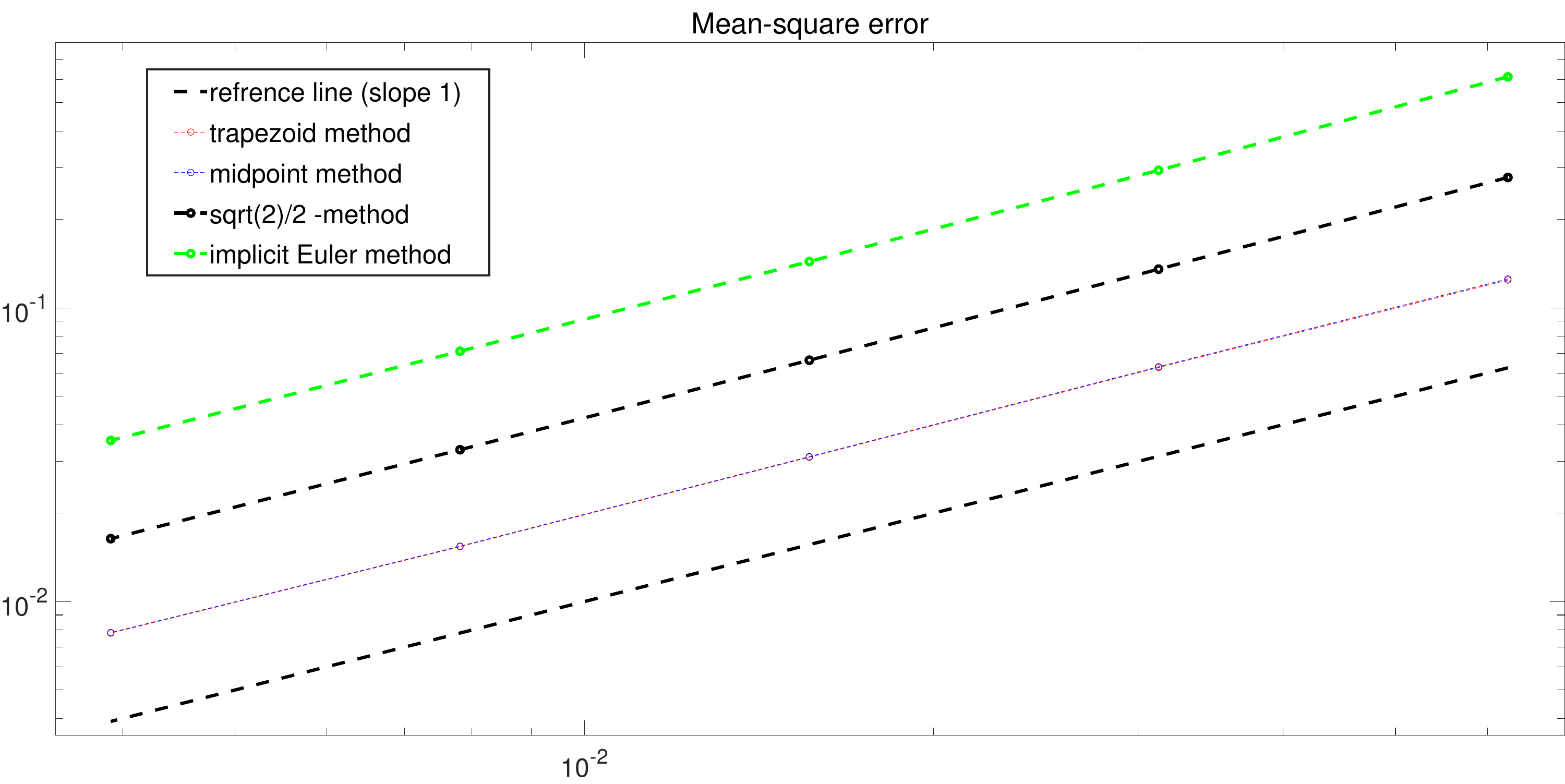}
	\caption{Mean square errors  for  the trapezoid method, midpoint  method, $\frac{\sqrt{2}}{2}$-method and implicit Euler method applied to \eqref{SDE2} in the log-log scale for five different step-sizes $h=2^{-4},2^{-5},2^{-6},2^{-7},2^{-8}$.}\label{F1}
\end{figure}

\begin{table}[htb]
	\centering
	\caption{Mean square errors for  the trapezoid method and midpoint  method for \eqref{SDE2}  for  different step-sizes.}\label{T3}
	\vspace{2mm}
	\setlength{\tabcolsep}{0.5mm}{
		\begin{tabular}{| c| c| c| c| c|c| } 
			\hline
			& $h=2^{-4}$& $h=2^{-5}$ &$h=2^{-6}$ & $h=2^{-7}$ & $h=2^{-8}$\\ 
			\hline
			trapezoid method
		&0.12435& 0.062660& 0.031007& 0.015376& 0.0078069               \\ 
			\hline
		midpoint  method
			
			&0.12542& 0.063027& 0.031137& 0.015436& 0.0078372            \\ 
			\hline
	\end{tabular}}
\end{table}

\begin{figure}[htb]
	\centering
	\includegraphics[width=1\textwidth]{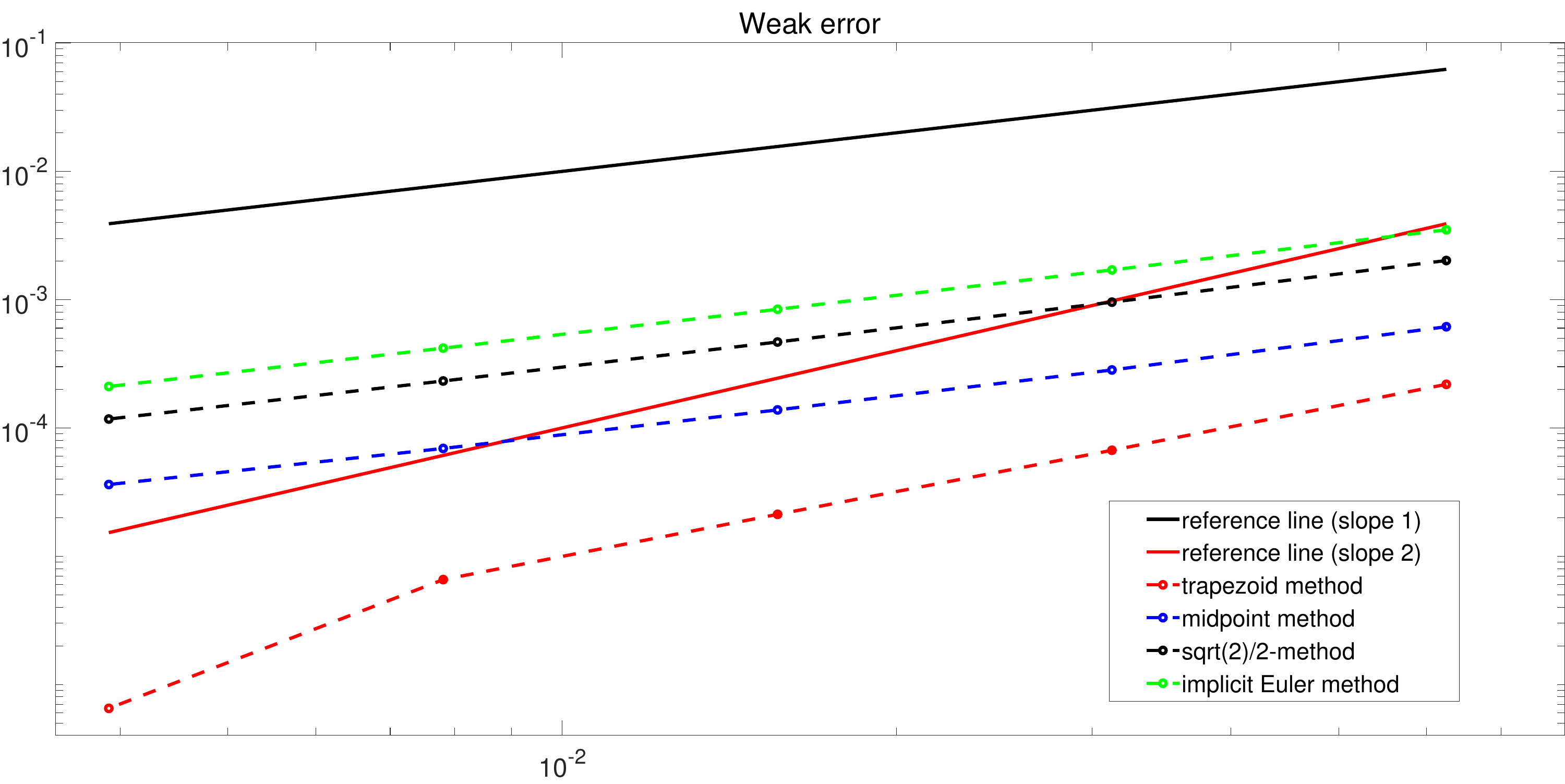}
	\caption{$|\mbf E\exp(-X_N)-\mbf E\exp(-X(T))|$ for  the trapezoid method, midpoint  method, $\frac{\sqrt{2}}{2}$-method and implicit Euler method applied to \eqref{SDE2} in the log-log scale for five different step-sizes $h=2^{-4},2^{-5},2^{-6},2^{-7},2^{-8}$}\label{F2}
\end{figure}

\begin{figure}[H]
	\centering
		\subfigure[]{\includegraphics[width=0.48\textwidth]{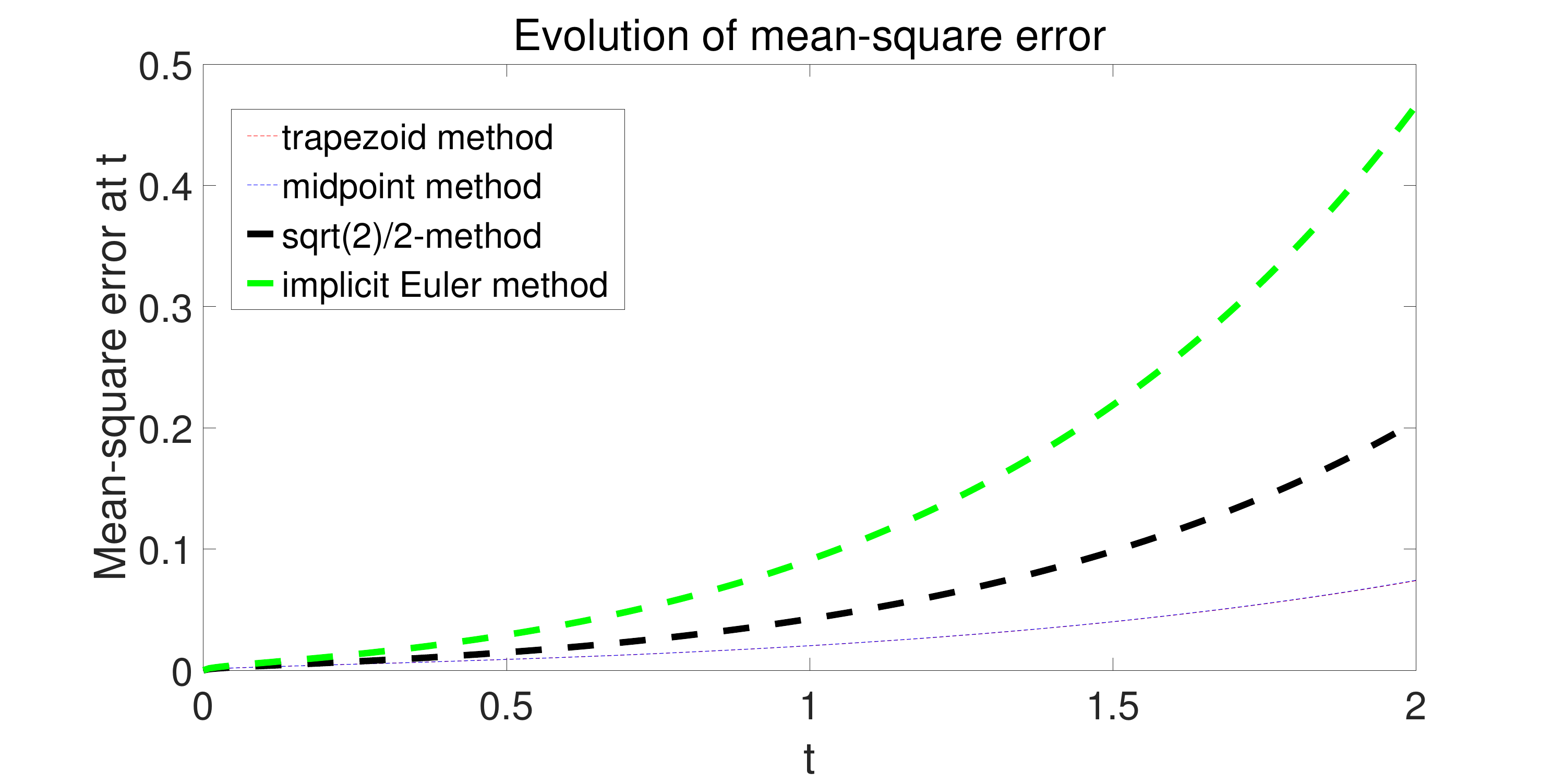}}
	\subfigure[]{\includegraphics[width=0.48\textwidth]{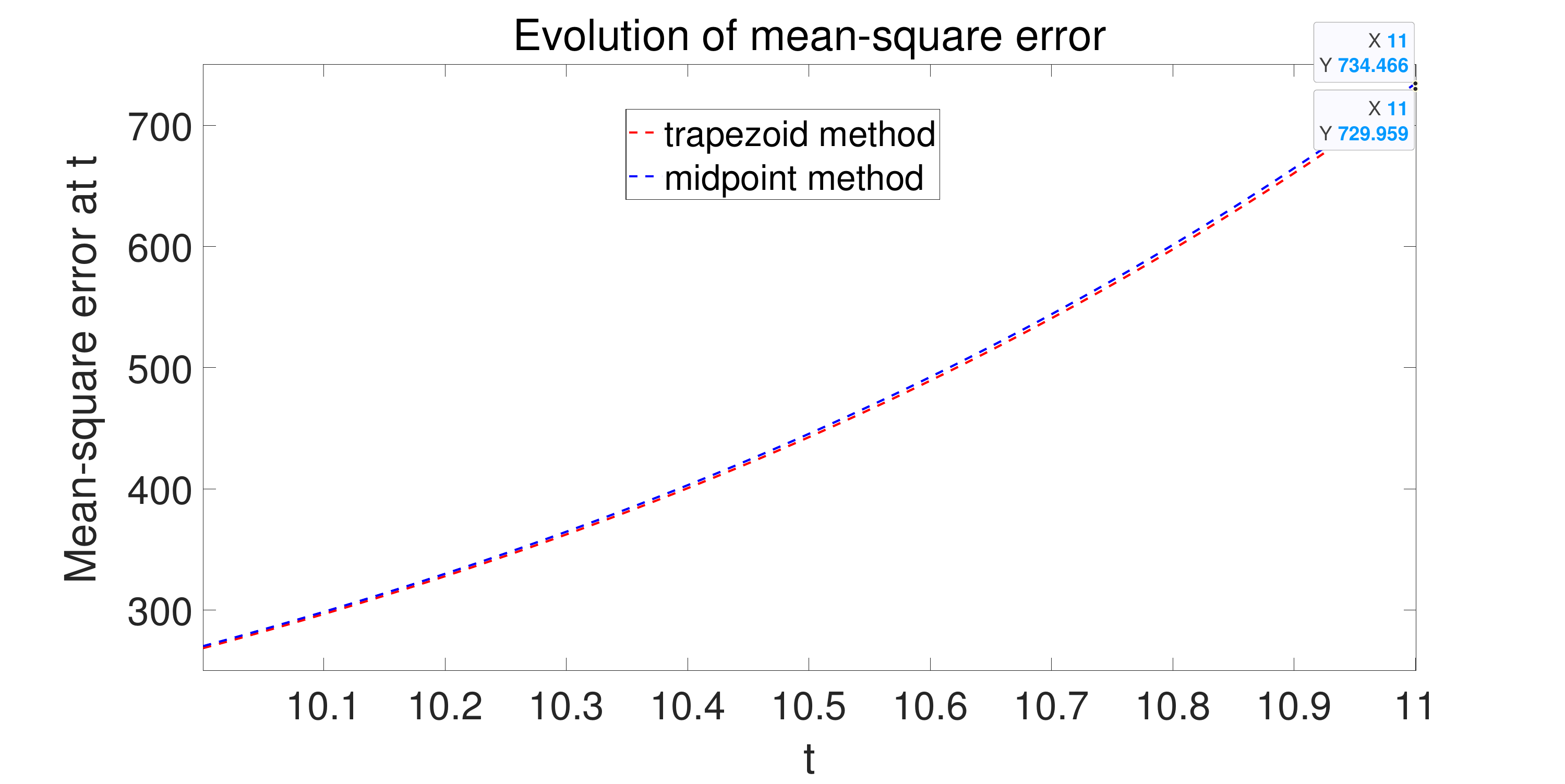}} \\
	\subfigure[]{\includegraphics[width=0.48\textwidth]{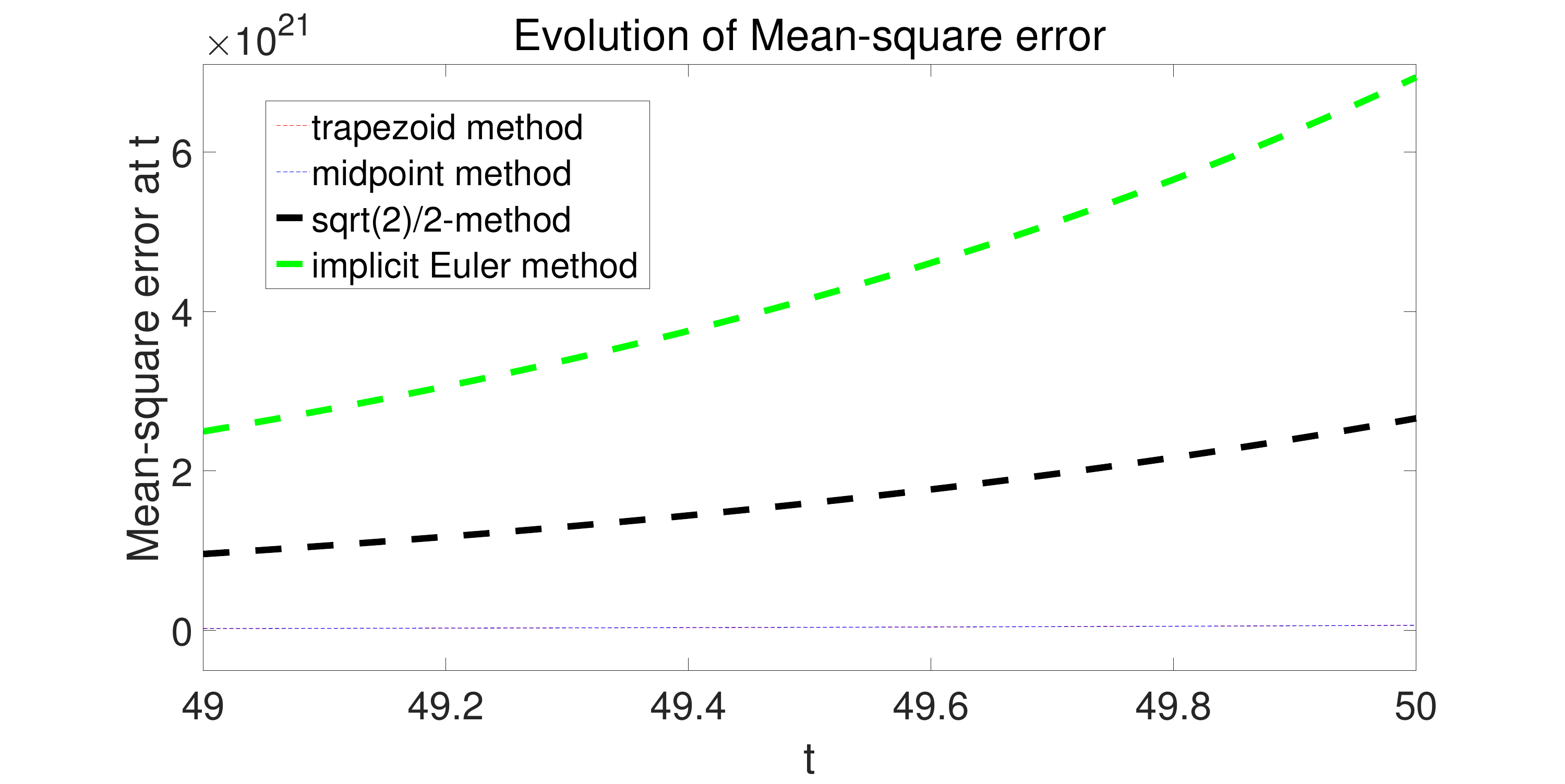}}
	\subfigure[]{\includegraphics[width=0.48\textwidth]{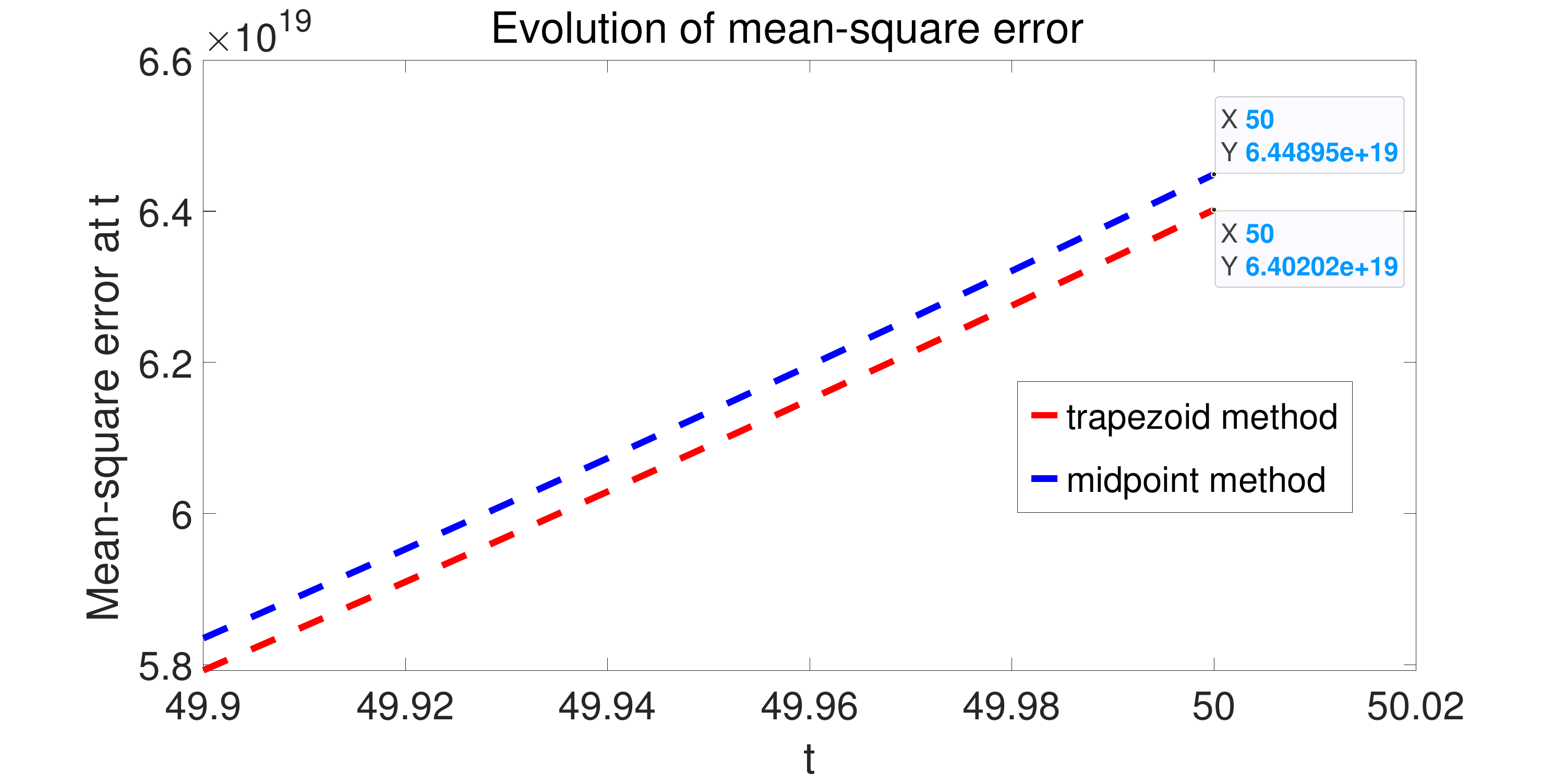}} 
	\caption{Evolution of mean-square errors of  the trapezoid method, midpoint  method, $\frac{\sqrt{2}}{2}$-method and implicit Euler method applied to \eqref{SDE2} with $h=0.01$. }
	\label{F3}
\end{figure}

Second, we test the weak convergence orders of these methods.  We take $x_0=1$, $\sigma_1=\sigma_2=1$ and $T=2^{-2}$. These methods are realized by applying the fixed point iteration with a tolerance error $10^{-10}$. The exact solution $X(T)$ are approximated by the trapezoid method with the small step-size $h=2^{-16}$. The expectation is approximately obtained based on the Monte--Carlo method with $200000$ sample paths. Figure \ref{F2} indicates that the weak order of the trapezoid method is $2$ and the weak order of the other three methods is $1$.

Finally, we perform numerical experiments to present the evolution of mean-square errors $(\mbf E|X_n-X(t_n)|^2)^{1/2}$ of these four methods w.r.t. the time $t$. The exact solution is approximated by the trapezoid method with the step-size $h=10^{-4}$. We set $x_0=1$, $\sigma_1=\sigma_2=1$ and use $5000$ sample paths to approximate the expectation. As is shown in Figure \ref{F3}, the trapezoid method has the smallest mean-square error after a long time. Although the mean-square error of the  trapezoid method is very close to that of the midpoint method over a period of short time, the difference between of them becomes considerably large after a long time.   
 From Table \ref{T2}, we know that $\eta_2=0$ for the trapezoid method, which is not the case for other three methods.  This  verifies the second inference of Remark \ref{inf2}. We also observe that the mean-square error of the $\frac{\sqrt{2}}{2}$-method is much smaller than that of the implicit Euler method for large time $T$. In addition, Table \ref{T2} shows that $\eta_2$ of the $\frac{\sqrt{2}}{2}$-method is smaller than that of the implicit Euler method, which verifies the first inference of Remark \ref{inf2}

\section{Conclusion}\label{Sec7}
In this work, we propose a framework to establish the asymptotic error distribution of diffusion-implicit or fully implicit numerical methods applied to SDEs. Based on the framework, we gives  the asymptotic error distribution of the SRK methods of strong order $1$ and identify the key parameter reflecting the growth rate of the mean-square error of the SRK methods by studying the properties of the limit distribution. We infer that among the SRK methods of strong order $1$, those of weak order $2$ have the smallest mean-square errors after a long time. These results suggest that in  numerical simulations for SDEs, methods with high weak order should be prioritized among those sharing the same strong convergence order.

We would like to point out that  Framework \ref{frame} also applies to numerical methods with strong order no less than $1.5$  and  SDEs driven by multidimensional Brownian motions for multiplicative noise case. Of course, more extensive computation is indispensable to achieve these generalizations. For example, for the case of  multidimensional  multiplicative noise, the SRK method only including the (truncated) Brownian increments is strong order $0.5$ attainable; see \cite[Theorem 4.1]{Burrage98}. In this case, the normalized error process should be $\sqrt{N}(Y^{SRK}_N-Y(T))$.  We anticipate that deriving the asymptotic error distribution of such an SRK method with strong order $0.5$ would not require substantial computational effort.  For the case of  multidimensional multiplicative noise, to make the SRK methods attain strong order $1$,  the SRK methods should include multiple stochastic integrals  or the underlying SDEs have commutative noise; see \cite{Burrage98}. We will be devoted to solving these problems in the future work.

\appendix
\section*{Appendix}
\setcounter{equation}{0}
\setcounter{subsection}{0}
\setcounter{Def}{0}
\renewcommand{\theDef}{A.\arabic{Def}}
\renewcommand{\theequation}{A.\arabic{equation}}
\renewcommand{\thesubsection}{A.\arabic{subsection}}
\section*{Proof of Lemma \ref{solvability}}
\begin{proof}
	Denote $\Phi_y(Z)=(e\otimes I_d)y+h(A\otimes I_d)F(Z)+\DW(B\otimes I_d)G(Z)$. Under Assumption \ref{assum1}, $F$ and $Z$ are Lipschitz continuous, which gives that there is $K_1>0$ such that
\begin{align*}
	|\Phi_y(Z^1)-\Phi_y(Z^2)|\le K_1(h+|\DW|)|Z^1-Z^2|.
\end{align*}
Further, there is $h_2>0$ such that for any $h\in(0,h_2]$ and $\omega\in\Omega$, $K_1(h+|\DW|)<1$. By the contraction mapping principle, for any $h\in(0,h_2]$, $\omega\in\Omega$ and $y\in\mbb R^d$, $\Phi_y(Z)=Z$ has a unique solution $Z^*$, and $Z^*(\omega)=\lim\limits_{n\to\infty}Z^{n}(\omega)$, where $Z^{n+1}(\omega)=\Phi_y(Z^n(\omega))$ and $Z^0(\omega)=(e\otimes I_d)y$.

Further, there is $K_2>0$ such that
\begin{align*}
	&\;|\Phi_y(Z)-(e\otimes I_d)y|\le K(1+|Z|)(h+|\DW|)\\
	\le&\; K_2|Z-(e\otimes I_d)y|(h+|\DW|)+K_2(1+|y|)(h+|\DW|).
\end{align*}
Choose $h_1\le h_2$ such that for any $h\in(0,h_1]$, $K_2(h+|\DW|)<\frac{1}{2}$.
Thus, for any $h\in(0,h_1]$, if $|Z-(e\otimes I_d)y|\le 2K_2(1+|y|)(h+|\DW|)$, then
\begin{align*}
	|\Phi_y(Z)-(e\otimes I_d)y|\le 2K_2(1+|y|)(h+|\DW|),
\end{align*}
which along with the definition of $Z^n$ yields
$|Z^n-(e\otimes I_d)y|\le 2K_2(1+|y|)(h+|\DW|)$.
Passing to the limit produces $|Z^*-(e\otimes I_d)y|\le C_0(1+|y|)(h+|\DW|)$ with $C_0:=2K_2$.
\end{proof}

\section*{Proof of Lemma \ref{SRK1bound}}
\begin{proof}
	Note that $h+|\DW|\le K$ for any $h\le h_1\wedge 1$, which combined with  Lemma \ref{solvability} gives that $|Z_i|\le K|y|+K(h+|\DW|)$. Thus, by \eqref{SRK2} and the Lipschitz property of $f$ and $g$, $|Y^{SRK}_{t,x}(t+h)-y|\le K(h+|\DW|)\sum_{i=1}^{s_0}(1+|Z_i|)\le K(1+|y|)(h+|\DW|)= M(\DW)(1+|y|)h^{1/2}$ with $M(\DW):=K(h^{1/2}+|\DW|h^{-1/2})$. It is easy
	to see that $\mbf E|M(\DW)|^p\le K(p)$ for any $p\ge 1$. Further, from Lemma \ref{solvability} we obtain 
	\begin{align*}
		\big|\mbf E\big[\DW\sum_{i=1}^{s_0}\beta_ig(Z_i)\big]\big|&=\big|\mbf E\big[\DW\sum_{i=1}^{s_0}\beta_i(g(Z_i)-g(y))\big]\big|\\
		&\le K(1+|y|)\mbf E(h|\DW|+|\DW|^2)\le K(1+|y|)h.
	\end{align*}
	In addition,  by Lemma \ref{solvability}, $\big|\mbf E\big[h\sum_{i=1}^{s_0}\alpha_if(Z_i)\big]\big|\le Kh\sum_{i=1}^{s_0}(1+\mbf E|Z_i|)\le K(1+|y|)h$. Thus, we arrive at
	$|\mbf E(Y^{SRK}_{t,y}(t+h)-y)|\le K(1+|y|)h$. Finally, \cite[Page 102, Lemma 2.2]{Milsteinbook} and the H\"older inequality yield the desired result.
\end{proof}

\section*{Proof of Theorem \ref{SRK1converge}}
\begin{proof}
	In this proof, denote by $\iota$  the growing degree of a function of $\mbf F$,  which may vary for each appearance. It follows from the Taylor formula and $\mcal D^{2}f,\,\mcal D^3g\in \mbf F$ that $\mcal Df,\,\mcal D^ig\in \mbf F$ for $i=1,2$.
	The Taylor expansion gives 
	\begin{align}\label{gZ1}
		g(Z_j)=g(y)+\int_0^1\nabla g(y+\lambda(Z_j-y))(Z_j-y)\ud\lambda=:g(y)+ R^1_{g,j}.
	\end{align}
	Since $g\in\mbf C^1(\mbb R^d)$ and $g$ is Lipschitz continuous,  $\nabla g$ is bounded. Accordingly, by Lemma \ref{solvability}, $\|R^1_{g,j}\|_{\mbf L^p(\Omega)}\le K(1+|y|)h^{1/2}$ for $p\ge1$.  Similarly, one has
	\begin{align}\label{fZ1}
		f(Z_j)=f(y)+R^1_{f,j}
	\end{align}
	with $\|R^1_{f,j}\|_{\mbf L^p(\Omega)}\le K(1+|y|)h^{1/2}$.
	Note that \eqref{Z} is equivalent to
	\begin{align}\label{Zi}
		Z_i=y+h\sum_{j=1}^{s_0}a_{ij}f(Z_j)+\DW\sum_{j=1}^{s_0}b_{ij}g(Z_j),~i=1,\ldots,s_0.
	\end{align}
	Plugging \eqref{gZ1}  into \eqref{Zi} yields
	\begin{align}\label{Zi1}
		Z_i=y+\DW\sum_{j=1}^{s_0}b_{ij}g(y)+R^1_{Z_i},
	\end{align}
	where $R^1_{Z_i}=h\sum_{j=1}^{s_0}a_{ij}f(Z_j)+\DW \sum_{j=1}^{s_0}b_{ij}R^1_{g,j}$. It follows from \eqref{fZ1} and the H\"older inequality that $\|R^1_{Z_i}\|_{\mbf L^p(\Omega)}\le K(1+|y|)h$.
	
	It follows from \eqref{Zi1} and the Taylor formula that
	\begin{align}\label{gZ2}
		g(Z_i)&=g(y)+\nabla g(y)(Z_i-y)+\tilde R^2_{g,i} = g(y)+\DW\sum_{j=1}^{s_0}b_{ij}\nabla g(y)g(y)+R^2_{g,i},
	\end{align}
	where  $\tilde{R}^2_{g,i}:=\int_0^1(1-\lambda)\mcal D^2g(y+\lambda(Z_i-y))(Z_i-y,Z_i-y)\ud \lambda$ and $R^2_{g,i}=\nabla g(y)R^1_{Z_i}+\tilde{R}^2_{g,i}$. By Lemma \ref{solvability}, the boundedness of $h+|\DW|$ on $(0,h_1]$ and $\mcal D^2g\in\mbf F$, 
	\begin{align}\label{sec3eq1}
		\|\mcal D^2g(y+\lambda(Z_i-y))\|_{\otimes}\le K(1+|y|^\iota),
	\end{align} 
	which combined with Lemma \ref{solvability} leads to  $\|\tilde R^2_{g,i}\|_{\mbf L^p(\Omega)}\le K(1+|y|^\iota) (\mbf E|Z_i-y|^{2p})^{1/p}\le K(1+|y|^\iota)h$. Hence, we arrive at $\| R^2_{g,i}\|_{\mbf L^p(\Omega)}\le K(1+|y|^\iota)h$.
	Similarly, one can prove
	\begin{align}\label{fZ2}
		f(Z_i)=f(y)+\DW\sum_{j=1}^{s_0}b_{ij}\nabla f(y)g(y)+R^2_{f,i}
	\end{align}
	with $\|R^2_{f,i}\|_{\mbf L^p(\Omega)}\le K(1+|y|^\iota)h$.
	Plugging \eqref{fZ1} and \eqref{gZ2}  into \eqref{Zi} gives
	\begin{align}\label{Zi2}
		Z_i=y+\DW \sum_{j=1}^{s_0}b_{ij}g(y)+h\sum_{j=1}^{s_0}a_{ij}f(y)+\DW^2\sum_{j=1}^{s_0}b_{ij}\sum_{k=1}^{s_0}b_{jk}\nabla g(y)g(y)+R^2_{Z_i},
	\end{align}
	where $R^2_{Z_i}=h\sum_{j=1}^{s_0}a_{ij}R^1_{f,j}+\DW\sum_{j=1}^{s_0}b_{ij}R^2_{g,j}$. The previous estimates for $R^1_{f,j}$ and $R^2_{g,j}$ and the H\"older inequality  yield $\|R^2_{Z_i}\|_{\mbf L^p(\Omega)}\le K(1+|y|^\iota)h^{3/2}$. 
	
	The Taylor formula gives
	\begin{align*}
		g(Z_i)=g(y)+\nabla g(y)(Z_i-y)+\frac{1}{2}\mcal D^2g(y)(Z_i-y,Z_i-y)+\tilde{R}^3_{g,i},
	\end{align*}
	$\tilde{R}^3_{g,i}:=\frac{1}{2}\int_0^1 (1-\lambda)^2\mcal D^3g(y+\lambda(Z_i-y))(Z_i-y,Z_i-y,Z_i-y)\ud\lambda$. Similar to \eqref{sec3eq1}, it holds that 	$\|\mcal D^3g(y+\lambda(Z_i-y))\|_{\otimes}\le K(1+|y|^\iota)$.  Then the assumptions on $g$ and Lemma \ref{solvability} produce $\|\tilde{R}^3_{g,j}\|_{\mbf L^p(\Omega)}\le K(1+|y|^\iota)h^{3/2}$. 
	Plugging \eqref{Zi2} into $\nabla g(y)(Z_i-y)$ and \eqref{Zi1} into $\frac{1}{2}\mcal D^2g(y)(Z_i-y,Z_i-y)$, we obtain
	\begin{align}\label{gZ3}
		g(Z_i)=g(y)+\DW\sum_{j=1}^{s_0}b_{ij}\nabla g(y)g(y)+R^3_{g,i},
	\end{align}
	where 
	\begin{align}
		R^3_{g,i}:=&\;h\sum_{j=1}^{s_0}a_{ij}\nabla g(y)f(y)+\DW^2\sum_{j=1}^{s_0}b_{ij}\sum_{k=1}^{s_0}b_{jk}(\nabla g(y))^2g(y)+\nabla g(y)R^2_{Z_i}\notag\\
		&\;+\frac{1}{2}\DW^2(\sum_{j=1}^{s_0}b_{ij})^2\mcal D^2 g(y)(g(y),g(y))+\DW\sum_{j=1}^{s_0}b_{ij}\mcal D^2 g(y)(g(y),R^1_{Z_i})\notag\\
		&\;+\frac{1}{2}\mcal D^2 g(y)(R^1_{Z_i},R^1_{Z_i})+\tilde{R}^3_{g,i}.\label{Rg3}
	\end{align}
	Combining the previous estimates for $R^1_{Z_i}$, $R^2_{Z_i}$ and $\tilde{R}^3_{g,i}$, one has
	\begin{align*}
		\|{R}^3_{g,i}\|_{\mbf L^p(\Omega)}\le K(1+|y|^\iota)h,~p\ge 1\quad \text{and}\quad |\mbf E(\DW {R}^3_{g,i})|\le K(1+|y|^\iota)h^{2}.
	\end{align*}
	
	Plugging \eqref{fZ2} and \eqref{gZ3} into \eqref{SRK2} and using 	$\alpha^\top e=\beta^\top e=1$, $\beta^\top(Be)=\frac{1}{2}$, we have
	\begin{align}\label{YSRKexpan1}
		Y^{SRK}_{t,y}(t+h)=y+\DW g(y)+\frac{1}{2}\DW^2\nabla g(y)g(y)+hf(y)+R^1_{Y^{SRK}},
	\end{align}
	where $R^1_{Y^{SRK}}:=\DW\sum_{i=1}^{s_0}\beta_iR^3_{g,i}+h\DW\sum_{i=1}^{s_0}\alpha_i\sum_{j=1}^{s_0}b_{ij}\nabla f(y)g(y)+h\sum_{i=1}^{s_0}\alpha_i R^2_{f,i}$. In addition, it holds that 
	\begin{align}\label{sec3eq2}
		|\mbf ER^1_{Y^{SRK}}|\le K(1+|y|^\iota)h^2,\quad \|R^1_{Y^{SRK}}\|_{\mbf L^p(\Omega)}\le K(1+|y|^\iota)h^{3/2}.
	\end{align}
	
	Define the operator $L:\mbf C^2(\mbb R^d)\to\mbf C(\mbb R^d)$ by $(Lu)(x)=\mcal Du(x)(f(x)+\frac{1}{2}\nabla g(y)g(y))+\frac{1}{2}\text{trace}(\mcal D^2u(x)g(x)g(x)^\top)$, which is the generator of \eqref{mulSDE2}. It follows from the It\^o formula that
	\begin{align*}
		&\;Y_{t,y}(t+h)\\
		=&\;y+\int_{t}^{t+h}f(Y_{t,y}(s))\ud s+\frac{1}{2}\int_{t}^{t+h}\nabla g(Y_{t,y}(s))g(Y_{t,y}(s))\ud s+\int_t^{t+h}g(Y_{t,y}(s))\ud W(s)\\
		=&\;y+f(y)h+\int_{t}^{t+h}\int_{t}^{s}Lf(Y_{t,y}(r))\ud r\ud s+\int_{t}^{t+h}\int_{t}^{s}\nabla f(Y_{t,y}(r))g(Y_{t,y}(r))\ud W(r)\ud s\\
		&\;+\frac{1}{2}\nabla g(y)g(y)h
		+\frac{1}{2}\int_{t}^{t+h}\int_{t}^{s}L(\nabla g g)(Y_{t,y}(r))\ud r\ud s+\frac{1}{2}\int_{t}^{t+h}\int_{t}^{s}\nabla(\nabla g g)(Y_{t,y}(r))g (Y_{t,y}(r))\\
		&\;\ud W(r)\ud s+g(y)\Delta W+\int_{t}^{t+h}\int_{t}^{s}Lg(Y_{t,y}(r))\ud r\ud W(s)+\int_{t}^{t+h}\int_{t}^{s}\nabla g(Y_{t,y}(r))g(Y_{t,y}(r))\ud W(r)\ud W(s)\\
		=:&\;y+\Delta W g(y)+\frac{1}{2}\nabla g(y)g(y)h+hf(y)+\int_{t}^{t+h}\int_{t}^{s}\nabla g(Y_{t,y}(r))g(Y_{t,y}(r))\ud W(r)\ud W(s)+\tilde R^{1,1}_{Y}.
	\end{align*}
	For any $\varphi \in \mbf F$ and $p\ge 1$, it holds that $\|\varphi(Y_{t,y}(t+h))\|_{\mbf L^p(\Omega)}\le K(1+|y|^\iota)$ owing to a similar estimate as  \eqref{Ybound}. Further, the BDG inequality and the H\"older inequality yield
	$|\mbf E\tilde{R}^{1,1}_{Y}|\le K(1+|y|^\iota)h^2$ and $\|\tilde{R}^{1,1}_{Y}\|_{\mbf L^p(\Omega)}\le K(1+|y|^\iota)h^{3/2}$. Again by the  It\^o formula and $\int_{t}^{t+h}\int_t^s\ud W(r)\ud W(s)=\frac{1}{2}(\Delta W^2-h)$, we have that $$\int_{t}^{t+h}\int_{t}^{s}\nabla g(Y_{t,y}(r))g(Y_{t,y}(r))\ud W(r)\ud W(s)=\frac{1}{2}(\Delta W^2-h)\nabla g(y)g(y)+\tilde{R}^{1,2}_{Y}$$
	with $\mbf E\tilde{R}^{1,2}_{Y}=0$ and $\|\tilde{R}^{1,2}_{Y}\|_{\mbf L^p(\Omega)}\le K(1+|y|^\iota)h^{3/2}$. Letting $R^1_{Y}=\tilde{R}^{1,1}_{Y}+\tilde{R}^{1,2}_{Y}$, we get
	\begin{align}\label{Yexpan1}
		Y_{t,y}(t+h)=y+\Delta Wg(y)+\frac{1}{2}\Delta W^2\nabla g(y)g(y)+hf(y)+R^1_{Y}
	\end{align}
	with $|\mbf E{R}^{1}_{Y}|\le K(1+|y|^\iota)h^{2}$ and $\|{R}^{1}_{Y}\|_{\mbf L^p(\Omega)}\le K(1+|y|^\iota)h^{3/2}$.
	
	By \eqref{YSRKexpan1} and \eqref{Yexpan1},
	\begin{align*}
		Y_{t,y}(t+h)-Y^{SRK}_{t,y}(t+h)=(\Delta W-\DW)g(y)+\frac{1}{2}\nabla g(y)g(y)(\Delta W^2-\DW^2)+R^1_{Y}-R^1_{Y^{SRK}}.
	\end{align*}
	By \eqref{zeta3}, we have that for any $\varepsilon> 0$,
	$|\mbf E(\Delta W^2-\DW^2|=h|\mbf E(\xi^2-\zeta_h^2)|\le K(\varepsilon) h^{\kappa+1-\varepsilon}$. From \eqref{zeta1} we derive that $(\mbf E|\Delta W-\DW|^{2p})^{1/(2p)}\le K(\varepsilon)h^{\frac{1}{2}+\frac{\kappa}{2p}}$ for $p\ge 1$. Using \eqref{zeta2} gives that $(\mbf E|\Delta W^2-\DW^2|^{2p})^{1/(2p)}\le K(\varepsilon)h^{1+\frac{\kappa}{2p}-\varepsilon}$. Therefore, it follows from the previous estimates for $R^1_{Y^{SRK}}$ and $R^1_{Y}$ and $\kappa\ge 2$  that for any $p\in[1,\frac{\kappa}{2}]$,
	\begin{align*}
		|\mbf E(Y_{t,y}(t+h)-Y^{SRK}_{t,y}(t+h))|&\le K(1+|y|^\iota)h^{2},\\
		\|Y_{t,y}(t+h)-Y^{SRK}_{t,y}(t+h)\|_{\mbf L^{2p}(\Omega)}&\le K(1+|y|^\iota)h^{3/2}.
	\end{align*}
	This verifies the condition (A1) of Theorem \ref{fundamental}. Note that the condition (A3) of Theorem \ref{fundamental} holds for $Y_{t,y}(t+h)$  (see \cite[Lemma 2.2]{ZhangZQ13}). In addition, condition (A2) of Theorem \ref{fundamental} is fulfilled owing to   Lemma \ref{SRK1bound} and \eqref{Ybound}. Thus, the proof is completed as a result of  Theorem \ref{fundamental}.
\end{proof}

\section*{Proof of Lemma \ref{YSRKexpan}}
\begin{proof}
	In this proof,	denote by $\iota$  the growing degree of a function of $\mbf F$,  which may vary for each appearance.
	Note that $\mcal D^if\in\mbf F$ for $i=1,2$ and  $\mcal D^ig\in\mbf F$ for $i=1,2,3$ due to $\mcal D^3 f\in\mbf F$ and $\mcal D^4g\in\mbf F$. 
	
	By \eqref{gZ3}-\eqref{Rg3}, we can write
	\begin{align}
		g(Z_i)=&\;g(y)+\DW\sum_{j=1}^{s_0}b_{ij}\nabla g(y)g(y)+h\sum_{j=1}^{s_0}a_{ij}\nabla g(y)f(y)+\DW^2\sum_{j=1}^{s_0}b_{ij}\sum_{k=1}^{s_0}b_{jk}(\nabla g(y))^2g(y)\notag\\
		&\;+\frac{1}{2}\DW^2(\sum_{j=1}^{s_0}b_{ij})^2\mcal D^2g(y)(g(y),g(y))+\bar{R}^3_{g,i},\label{gZi3},
	\end{align}
	with $\|\bar{R}^3_{g,i}\|_{\mbf L^p(\Omega)}\le K(p)(1+|y|^{\iota})h^{3/2}$. 
	Plugging \eqref{fZ2} and \eqref{gZi3} into \eqref{Zi} produces
	\begin{align}
		Z_i=&\;y+\DW\sum_{j=1}^{s_0}b_{ij}g(y)+h\sum_{j=1}^{s_0}a_{ij}f(y)+\DW^2\sum_{j=1}^{s_0}b_{ij}\sum_{k=1}^{s_0}b_{jk}\nabla g(y)g(y)\notag\\
		&\;+\DW h\sum_{j=1}^{s_0}a_{ij}\sum_{k=1}^{s_0}b_{jk}\nabla f(y)g(y)+\DW h\sum_{j=1}^{s_0}b_{ij}\sum_{k=1}^{s_0}a_{jk}\nabla g(y)f(y) \notag\\
		&\;+\DW^3\sum_{j,k,l=1}^{s_0}b_{ij}b_{jk}b_{kl}(\nabla g(y))^2g(y)+\frac{1}{2}\DW^3\sum_{j=1}^{s_0}b_{ij}\big(\sum_{k=1}^{s_0}b_{jk}\big)^2\mcal D^2g(y)(g(y),g(y))+R^3_{Z_i},\label{Zi3}
	\end{align}
	where $R^3_{Z_i}:=h\sum_{j=1}^{s_0}a_{ij}R^2_{f,j}+\DW\sum_{j=1}^{s_0}b_{ij}\bar{R}^3_{g,j}$. Since $\|{R}^2_{f,j}\|_{\mbf L^p(\Omega)}\le K(p)(1+|y|^{\iota})h$ (see the proof of Theorem \ref{SRK1converge}), we have  $\|{R}^3_{Z_i}\|_{\mbf L^p(\Omega)}\le K(p)(1+|y|^{\iota})h^{2}$.
	
	The Taylor formula leads to
	\begin{align*}
		f(Z_i)=f(y)+\nabla f(y)(Z_i-y)+\frac{1}{2}\mcal D^2f(y)(Z_i-y,Z_i-y)+\tilde R^3_{f,i}
	\end{align*}
	with $\tilde R^3_{f,i}:=\frac{1}{2}\int_0^1(1-\lambda)^2\mcal D^3f(y+\lambda(Z_i-y))(Z_i-y,Z_i-y,Z_i-y)\ud\lambda$. By Lemma \ref{solvability} and $\mcal D^3f\in\mbf F$, 
	\begin{align}\label{sec3eq3}
		\|\tilde R^3_{f,i}\|_{\mbf L^p(\Omega)}\le K(p)(1+|y|^\iota)h^{3/2}.
	\end{align}
	Plugging \eqref{Zi2} into $\nabla f(y)(Z_i-y)$ and \eqref{Zi1} into $\frac{1}{2}\mcal D^2f(y)(Z_i-y,Z_i-y)$ yields
	\begin{align}\label{fZ3}
		&\;f(Z_i)=f(y)+\DW\sum_{j=1}^{s_0}b_{ij}\nabla f(y)g(y)+h\sum_{j=1}^{s_0}a_{ij}\nabla f(y)f(y)\notag\\
		&\;+\DW^2\sum_{j=1}^{s_0}b_{ij}\sum_{k=1}^{s_0}b_{jk}\nabla f(y)\nabla g(y)g(y)+\frac{1}{2}\DW^2 (\sum_{j=1}^{s_0}b_{ij})^2\mcal D^2f(y)(g(y),g(y))+R^3_{f,i},
	\end{align}
	where $R^3_{f,i}:=\nabla f(y)R^2_{Z_i}+\DW\sum_{j=1}^{s_0}b_{ij}\mcal D^2f(y)(g(y),R^1_{Z_i})+\frac{1}{2}\mcal D^2f(y)(R^1_{Z_i},R^1_{Z_i})+\tilde R^3_{f,i}$. Combining the estimates for $R^1_{Z_i}$ and $R^2_{Z_i}$ from the proof of Theorem \ref{SRK1converge} and using \eqref{sec3eq3}, one has $\| R^3_{f,i}\|_{\mbf L^p(\Omega)}\le K(p)(1+|y|^\iota)h^{3/2}.$ 
	
	Applying the Taylor formula to $g(Z_i)$, one has
	\begin{align*}
		g(Z_i)=g(y)+\nabla g(y)(Z_i-y)+\frac{1}{2}\mcal D^2g(y)(Z_i-y,Z_i-y)+\frac{1}{6}\mcal D^3g(y)(Z_i-y,Z_i-y,Z_i-y)+\tilde{R}^4_{g,i},
	\end{align*} 
	where $\tilde{R}^4_{g,i}:=\frac{1}{6}\int_0^1(1-\lambda)^3\mcal D^4g(y+\lambda(Z_i-y))(Z_i-y,Z_i-y,Z_i-y,Z_i-y)\ud\lambda$. By $\mcal D^4g\in\mbf F$ and Lemma \ref{solvability}, $\| \tilde R^4_{g,i}\|_{\mbf L^p(\Omega)}\le K(p)(1+|y|^\iota)h^{2}$. Plugging \eqref{Zi3}, \eqref{Zi2} and \eqref{Zi1} into    $\nabla g(y)(Z_i-y)$, $\mcal D^2g(y)(Z_i-y,Z_i-y)$ and   $\mcal D^3g(y)(Z_i-y,Z_i-y,Z_i-y)$, respectively, and keeping those terms with strong order no larger than $1.5$, we obtain
	\begin{align}
		&\;	g(Z_i)=g(y)+\DW\sum_{j=1}^{s_0}b_{ij}\nabla g(y)g(y)+h\sum_{j=1}^{s_0}a_{ij}\nabla g(y)f(y)
		+\DW^2\Big[\sum_{j=1}^{s_0}b_{ij}\sum_{k=1}^{s_0}b_{jk}(\nabla g(y))^2g(y)\notag\\
		&\;+\frac{1}{2}(\sum_{j=1}^{s_0}b_{ij})^2\mcal D^2g(y)(g(y),g(y))\Big]	+\DW h\Big[\sum_{j=1}^{s_0}a_{ij}\sum_{k=1}^{s_0}b_{jk}\nabla g(y)\nabla f(y)g(y)\notag\\
		&\;+\sum_{j=1}^{s_0}b_{ij}\sum_{k=1}^{s_0}a_{jk}(\nabla g(y))^2f(y)+(\sum_{j=1}^{s_0}a_{ij})(\sum_{j=1}^{s_0}b_{ij})\mcal D^2g(y)(f(y),g(y))\Big]\notag\\
		&\;+\DW^3\Big[\sum_{j,k,l=1}^{s_0}b_{ij}b_{jk}b_{kl}(\nabla g(y))^3g(y)+\frac{1}{2}\sum_{j=1}^{s_0}b_{ij}(\sum_{k=1}^{s_0}b_{jk})^2\nabla g(y)\mcal D^2g(y)(g(y),g(y))\notag\\
		&\;+\sum_{j=1}^{s_0}b_{ij}\sum_{k,l=1}^{s_0}b_{ik}b_{kl}\mcal D^2g(y)(g(y),\nabla g(y)g(y))+\frac{1}{6}(\sum_{j=1}^{s_0}b_{ij})^3\mcal D^3g(y)(g(y),g(y),g(y))\Big]+R^4_{g,i} \label{gZ4}
	\end{align}
	with $\| R^4_{g,i}\|_{\mbf L^p(\Omega)}\le K(p)(1+|y|^\iota)h^{2}$.
	
	Plugging \eqref{fZ3}-\eqref{gZ4} into \eqref{SRK2} and using $\alpha^\top e=\beta^\top e=1$ and $\beta^\top(Be)=\frac{1}{2}$, we obtain \eqref{YSRKexpansion} with $R^2_{Y^{SRK}}=h\sum_{i=1}^{s_0}\alpha_iR^3_{f,i}+\DW\sum_{i=1}^{s_0}\beta_iR^4_{g,i}$. Finally,  $\|R^2_{Y^{SRK}}\|_{\mbf L^p(\Omega)}\le K(p)(1+|y|^{\alpha'})h^{5/2}$ is obtained due to the previous estimates for $R^3_{f,i}$ and $R^4_{g,i}$.
\end{proof}
\bibliographystyle{plain}
\bibliography{mybibfile}

\end{document}